\documentclass[12pt, leqno]{article}

\usepackage{amsthm, amsfonts, amssymb, color}
 \usepackage{mathrsfs}
\usepackage{amsmath}
 \usepackage{amstext, amsxtra}
  \usepackage{txfonts}
 \usepackage[colorlinks, linkcolor=black, citecolor=blue, pagebackref, hypertexnames=false]{hyperref}

 \allowdisplaybreaks

 \textheight=23.6cm
 \textwidth = 16.8cm
 \marginparsep=0cm
 \oddsidemargin=-0.3cm
 \evensidemargin=0.0cm
 \headheight=13pt
 \headsep=0.8cm
 \parskip=0pt
 \hfuzz=6pt
 \widowpenalty=10000
  \setlength{\topmargin}{-1.2cm}

\baselineskip 16pt
\hfuzz=6pt

\hfuzz=6pt

\widowpenalty=10000

\newtheorem{theorem}{Theorem}[section]
\newtheorem{proposition}[theorem]{Proposition}
\newtheorem{coro}[theorem]{Corollary}
\newtheorem{lemma}[theorem]{Lemma}

\newtheorem{remark}[theorem]{Remark}

\newcommand\D{\mathcal{D}}
\newcommand\C{\mathbb{C}}
\newcommand\N{\mathbb{N}}
\newcommand\R{\mathbb{R}}

\newcommand\CC{\mathbb{C}}

\newcommand\ZZ{\mathbb{Z}}

\renewcommand\Re{\operatorname{Re}}
\renewcommand\Im{\operatorname{Im}}

\def\SL{\sqrt[m] L}

\newcommand{\supp}{{\rm supp}{\hspace{.05cm}}}

\newcommand\wrt{\,{\rm d}}

\title
{ Spectral multipliers, Bochner-Riesz means
 and uniform Sobolev  inequalities
  for  elliptic  operators
\footnotetext[1]{{\it{\rm 2010} Mathematics Subject Classification:}
Primary: 58J50;
Secondary: 42B15, 42B20, 35P15,  47F05.}
\footnotetext[2]{{\it Key words and phrase:} Spectral multiplier, Bochner-Riesz mean,
uniform Sobolev inequality, spectral measure, restriction type estimate.}}

\smallskip

\author{
Adam Sikora \\
Macquarie University, Sydney\\
sikora@maths.mq.edu.au
\and
Lixin Yan\\
Sun Yat-sen  University, Guangzhou\\
mcsylx@mail.sysu.edu.cn\\
\and
Xiaohua Yao\\
 Central China Normal University, Wuhan\\
yaoxiaohua@mail.ccnu.edu.cn\\
}

 \date{\today}

\begin{document}

\vskip 1cm

\maketitle
\begin{abstract}
This paper comprises  two parts. In the first,
 we study  $L^p$ to $L^q$ bounds for spectral multipliers
and   Bochner-Riesz means with negative index in the general setting
 of abstract self-adjoint operators. In the second we obtain the uniform Sobolev estimates for  constant coefficients higher order elliptic operators $P(D)-z$ and all  $z\in {\mathbb C}\backslash [0, \infty)$, which give  an  extension   of  the second order results of Kenig-Ruiz-Sogge \cite{KRS}. Next we use  perturbation
	 techniques to prove  the uniform Sobolev estimates for  Schr\"odinger operators $P(D)+V$ with small  integrable potentials $V$.
	 Finally  we  deduce spectral multiplier estimates
     for all these  operators, including sharp Bochner-Riesz summability results.
	 
\end{abstract}

 \tableofcontents

\section{Introduction}
\setcounter{equation}{0}

In this paper we investigate  $L^p$ to  $L^q$ estimates  for spectral multiplier operators including
Bochner-Riesz means with negative index in the general setting of abstract self-adjoint operators
as well as elliptic differential operators. We also study closely related issue of the uniform Sobolev estimates. In this
section we review these ideas, present our results, and put them in context.



Suppose  that $X $ is a metric measure space  and that $L$
is a nonnegative self-adjoint operator acting on the
space $L^2(X)$.   Such an operator admits a spectral
resolution  $E_L(\lambda)$. If $F$ is a real-valued  Borel function $F$ on $[0, \infty),$
 we can define the operator $F(L)$ by the formula
\begin{equation}\label{e1.1}
F(L)=\int_0^{\infty}F(\lambda) \wrt E_L(\lambda).
\end{equation}
By the spectral theory the norm $\|F(L)\|_{2\to 2}$ is bounded by $L^\infty$ norm of
the function $F$ (on the spectrum of $L$).
 We call $\wrt E_L(\lambda)$ the spectral measure associated with the operator $L.$
A significant problem often  considered in the spectral multiplier  theory is to describe sufficient
 conditions on   $F$ to ensure
  the boundedness of extension of multiplier  $F(L)$ from the operator defined on   $L^2(X)$ to one
  acting between some $L^p(X)$ spaces or even more general
 functional spaces defined on $X$.
 Since the fundamental works of Mikhlin and H\"ormander on Fourier multipliers \cite{Mi, Ho4},
 one usually looks for  conditions formulated in terms  of differentiability of the function $F$.
  In addition the special instance  of the Bochner-Riesz mean
 described below is also often investigated.

 In the last fifty or so years
 spectral multipliers theory  and the Bochner-Riesz means
have attracted a lot of attention and have been studied extensively by many authors. The existing literature is
too broad to list all  significant contributions to the subject. Therefore  here we mention only some
examples of papers devoted or related to this research area  such as
 \cite{Ale, Blu, C3,  DC2, GS, GH,  Heb, Ho1,  KRS, KU, Ou, T1, St2,
Y-Z}. We wish to point out papers, which investigate sharp spectral
multiplier results, the main focus of our
 study and quote in addition \cite{COSY, ChS, CowS, DOS, GHS, SYY, SS}. We refer the reader for
  references in all works cited  above for more comprehensive list of relevant literature.

One of the  most  significant and more often considered instance of spectral multipliers is
the Bochner-Riesz mean   of
  the operator $L$. To define it,  we put
\begin{equation}\label{e1.2}
 S^{\alpha}_R(\lambda)= \frac{1}{\Gamma(\alpha +1)}\left(1-\frac{\lambda}{R}\right)^{\alpha}_+=\frac{1}{\Gamma(\alpha +1)}
      \left\{
       \begin{array}{cl}
     \left(1-\frac{\lambda}{R}\right)^{\alpha}  &\mbox{for}\;\; \lambda \le R \\ [8pt]
       0  &\mbox{for}\;\; \lambda > R. \\
       \end{array}
      \right.
   \end{equation}
 Then, we call  the operator $S^{\alpha}_R(L)$ defined by \eqref{e1.1}  the Bochner-Riesz mean of order $\alpha.$
  The additional factor $\frac{1}{\Gamma(\alpha +1)}$ is just
reparametrization for positive $\alpha$ which is convenient to use if one consider negative
range of $\alpha$, see \cite{Bak, Bor}.
The case  ${\alpha}=0$ corresponds to the spectral projector $E_{L}([0, R])$, while
for
${\alpha}>0$ one can  think of (\ref{e1.2}) as a smoothed version of the
spectral projector, where   the magnitude of  ${\alpha}$ increases the order of smoothness.
In   perspective the Bochner-Riesz means of the operator $L$ which we develop here
 play a special role because they are not only
the aim of our study but also a crucial tool in this paper.

To be able to  describe and discuss our results we have to introduce  some standard notation.
Throughout this paper we assume that
 $(X,d,\mu)$ is a metric measure  space with a Borel measure  $\mu$.
We denote by
$B(x,\rho)=\{y\in X,\, {d}(x,y)< \rho\}$  the open ball
with centre $x\in X$ and radius $\rho>0$. We   often just use $B$ instead of $B(x, \rho)$.
Given $\lambda>0$, we write $\lambda B$ for the $\lambda$-dilated ball
which is the ball with the same centre as $B$ and radius $\lambda\rho$.
We set $V(x,\rho)=\mu(B(x,\rho))$ the volume of $B(x,\rho)$.

We say that $(X, d, \mu)$ satisfies
 the doubling condition, see  \cite[Chapter 3]{CW},
if there  exists a constant $C>0$ such that
\begin{equation}\tag{${\rm D}$}
V(x,2\rho)\leq C V(x, \rho)\quad \forall\,\rho>0,\,x\in X. \label{e2.1}
\end{equation}
If this is the case, then there exist  constants $n$ and $C$ such that for all $\lambda\geq 1$ and $x\in X$
\begin{equation}\tag{${\rm D_n} $}
V(x, \lambda \rho)\leq C\lambda^n V(x,\rho). \label{e2.2}
\end{equation}
In the sequel we want to consider $n$ as small as possible and we always assume  that
condition \eqref{e2.1} and
\eqref{e2.2} are valid.
In the standard Euclidean space with the Lebesgue measure $n$ coincides with
its dimension.

Next we describe the notion of
Davies-Gaffney estimates, see \cite{BK1, BK3, CouS}.
 Given a  subset $E\subseteq X$, we  denote by  $\chi_E$   the characteristic
function of   $E$ and  set
$$
P_Ef(x)=\chi_E(x) f(x).
$$
Consider again a non-negative self-adjoint operator $L$  and an exponent
$m\geq 2$.
We say that the semigroup $e^{-tL}$ generated by  $L$ satisfies
 { $m$-th order Davies-Gaffney  estimates},
if there exist constants $C, c>0$ such that
\begin{equation}\tag{${\rm DG_{m}}$}
\big\|P_{B(x, t^{1/m})} e^{-tL} P_{B(y, t^{1/m})}\big\|_{2\to {2}}\leq
C   \exp\Big(-c\Big({d(x,y) \over    t^{1/m}}\Big)^{m\over m-1}\Big)\label{DG}
\end{equation}
for all $t>0$  and  $x,y\in X$.

Another condition which we usually impose on the semigroup generated by $L$ can be described in the
 following way.  We   assume that for some $1\le p< 2$,
\begin{equation}\tag{${\rm G_{p,2,m}}$}
\big\|e^{-t^mL}P_{B(x, s)}\big\|_{p\to 2} \leq
CV(x, s)^{{1\over 2}-{1\over p}} \left({s\over  {t}}\right)^{n({1\over p}-{1\over 2})}
\label{GP}
\end{equation}
holds for all $x\in X$ and  $s\geq t>0$.

Recall that  the semigroup  $e^{-tL}$ generated by $L$ is said to  satisfy
$m$-th order (pointwise)  Gaussian estimate ${\rm GE_m}$, see for instance  \cite[ Proposition 2.9]{BK1}, if
semigroup $e^{-tL}$ has integral kernels $p_t(x,y)$ and
there exist constants $C, c>0$
\begin{equation}\tag{${\rm GE_m}$}\label{GE} 
 |p_t(x,y)|\leq {C\over V(x,t^{1/m})} \exp\Big(-c\Big({d^m(x,y) \over    t}\Big)^{1\over m-1}\Big)
\end{equation}
for all $t>0$ and $x,y\in X$.
It is not difficult to note that  both conditions  \eqref{DG}
and \eqref{GP}  for any $1\le p< 2$ follow from Gaussian estimates \eqref{GE}.
 On
the other hand, there are many operators which satisfy
Davies-Gaffney estimates \eqref{DG} for which the standard pointwise
Gaussian estimates \eqref{GE} fail. For example,
Schr\"odinger operators with inverse-square potential see \cite{CouS, ScV}, second order elliptic operators with rough  lower order terms, see \cite{LSV}, or
 higher order elliptic operators with bounded measurable coefficients, see
 \cite{D2}.

%
For  semigroups generated by differential operators the parameter $m \ge 2$ above  usually corresponds to  their order.
The above estimates especially in the case $m=2$  are the main focus of heat kernel theory.
It is a  well-established area of mathematics,  which provides a deep understanding of Gaussian estimates
\eqref{GE} and   a broad class of examples ( operators and  ambient spaces ), for which such estimates hold,   see e.g. Davies \cite{Dav89}, Ouhabaz \cite{Ou} and Grigor'yan \cite{Gr}
and literature therein.

It is known  that conditions
\eqref{DG} and \eqref{GP}
 for some $1\leq p<2$ imply that  the spectral operator $F(L)$ is
bounded on $L^r(X)$ for all $p<r<p'$
for  any bounded Borel
function $F: {\mathbb R_+}\to {\mathbb C}$ such that
 \begin{eqnarray}\label {e1.3}
 \sup_{t>0}\|\eta F(t\cdot)\|_{C^k}<\infty
\end{eqnarray}
 for some $k> n(1/p-1/2)$, see for example  \cite{Ale, Blu, DOS} and Proposition \ref{prop2.2} below.
 Here $\eta \in C_c^\infty(0,\infty)$ is an arbitrary  non-zero auxiliary function.
 In particular, if $p=1$ then  this  corresponds to a
spectral multiplier version of the classical  Mikhlin theorem.

In general spectral multiplier theorems  based on norm in  \eqref{e1.3} do not lead to  critical exponent for
Bochner-Riesz summablity. To obtain such sharp results
 weaker   Sobolev norms
 $W^{\alpha, q}(\R)$ ($1\le q\le \infty$)  are  considered, see \cite{COSY, DOS, SYY}.
 The general perspective of these papers is that condition  ${\rm (ST^{q}_{p, 2, m})}$ below, imply sharp $W^{\alpha, q}(\R)$ version of spectral multipliers.
  See also \cite{GHS, KU}. Condition ${\rm (ST^{q}_{p, 2, m})}$ is motivated by classical
  Stein-Tomas restriction theorem, see \eqref{e1.4} below.
 We point out that considering different values of  $q\in [1,\infty]$ are often essential  for applications. If $q=\infty$, then
conditions ${\rm (ST^{\infty}_{p, 2, m})}$ and \eqref{GP} are equivalent for every $1\le p<2$.
The case $q=2$, corresponding to   classical H\"ormander  theorem, has
another characterization in terms of the spectral measure $dE_L(\lambda)$. Namely, given any $1\le p<2$
condition ${\rm (ST^{2}_{p, 2, m})}$
is equivalent to   the following  estimate
 \begin{eqnarray}
 \label{e1.4}
 \|dE_L(\lambda)\|_{p\to p'}\le C\ \lambda^{\frac{n}{m}(\frac{1}{p}-\frac{1}{p'})-1}, \ \ \lambda>0.\
  \end{eqnarray}
It is a remarkable fact that estimate \eqref{e1.4}  not only play a crucial role in
spectral multiplier theory and Bocher-Riesz analysis
 but can be also regarded as a significant example of  restriction type results in harmonic analysis. Indeed, if $\Delta$
is the standard Laplace operator in $\mathbb R^n$,
 then a $T^*T$ argument yields
  $$
d E_{\Delta}(\lambda) =(2\pi)^{-n} \lambda^{(n-1)/2}R_\lambda^*R_\lambda
 $$
where $R_\lambda$ is the restriction operator  defined by relation
 $
 R_\lambda(f)(\omega) =\hat{f}(\sqrt{\lambda} \omega),
 $
where  $\hat{f}$ is the Fourier transform of $f$ and $\omega\in   {\bf S}^{n-1}$ (the unit sphere).
Thus it follows from the celebrated  {  Stein-Tomas theorem}, see \cite{Tom} that
 the spectral projection measure
 $dE_{-\Delta}(\lambda)$  is bounded as an operator acting  from
 $L^p(\R^n)$ to $L^{p'}(\R^n)$ for any $1\le p\le 2(n+1)/(n+3)$.
 Let us also mention that in \cite{GHS} spectral estimate \eqref{e1.4}
 was obtained in the setting of Laplace type operator acting on asymptotically conic manifolds.

\bigskip

As we said in Abstract this paper comprises two parts.
In  first we will  study  the $L^p \to  L^q$ mapping properties  of spectral multipliers and
Bochner-Riesz means with negative index in the general setting
 of abstract self-adjoint operators.
 In the case of standard Laplace operator and Fourier transform such negative index means were
 studied in  \cite{Bak, Bor, CS, Gu}.
 In our discussion we consider condition
 that Bochner-Riesz mean $S_R^{\alpha}(  \sqrt[m]L)$ satisfies the $(p,q)$-estimate, if there
 exists a constant $C>0$ such that for all $R>0$
$$
\|S_R^{\alpha}(  \sqrt[m] L)P_{B(x,\rho)}  \|_{p\to q} \leq C  V(x,\rho)^{{1\over q}-{1\over p}}
(R\rho)^{{n}({1\over p}- {1\over q}) }
 \leqno{\rm (BR^{ \alpha}_{p, q, m})}
$$
for all $x\in X$ and all $\rho\geq 1/R$, see Section 3 below. In this context, we will show that,
on an abstract level,  Bochner-Riesz means with negative index
 can be used to study spectral multipliers. Roughly speaking,
under the assumption that  $V(x, \rho)\geq C\rho^n$ for all $x\in X$ and $\rho>0$, if $L$
satisfies  Davies-Gaffney  estimates \eqref{DG}
  and   ${\rm (G_{p_0, 2, m})}$ for some $1\leq p_0<2$, and
  ${\rm (BR^{ \alpha}_{p, q, m})}$  for $\alpha\geq -1$ and
$p_0< p< q< p'_0$,
then  for any  $F \in  W^{\beta, 1}(\R)$  such that  $\supp F\subseteq [1/4, 4]$,
 the operator
$F(t\sqrt[m]{L})$ is bounded from  $L^r(X)$ to $L^s(X)$
 for all  $p\leq r  \leq s\leq q$
and $\beta> n(1/p-1/r) +n({1/s}-{1/q}) +\alpha+1$.

 As an application we establish  the $L^p \to  L^q$ mapping properties  of
Bochner-Riesz means of the operator $L$ with negative indexes. To be able to provide more detail description
of our results we  introduce additional notation, which partially coincides with one considered in \cite{BMO, Bor}.
Given some $1\leq p< 2$, we set
\begin{eqnarray*}
A=\left( 1, {n+1+2\alpha\over 2n}\right),&& A'=\left({n-1-2\alpha\over 2n}, 0\right),\nonumber\\
B(p)=\left({n+1+2\alpha\over 2n}+\alpha-{2\alpha\over p}, {n+1+2\alpha\over 2n}\right),&&
B'(p)=\left({n-1-2\alpha\over 2n},  {2\alpha\over p}-\alpha+{n-1-2\alpha\over 2n}\right),\nonumber\\
C(p)=\left({1 \over p}, {n+1+2\alpha\over 2n}\right),&&
C'(p)=\left({n-1-2\alpha\over 2n},   1-{1 \over p}\right),\\
D(p)=\left({1\over 2} +{ \alpha }-{2 \alpha\over  p }, {1 \over 2 }\right),&&
D'(p)=\left({1 \over 2 }, {1\over 2}-{  \alpha }+{2 \alpha\over  p }\right).\nonumber
\end{eqnarray*}
Denote by
$\Delta_{\alpha}(p, n)$  the open pentagon with vertices $A, B(p), B'(p), A', (1,0)$.
In Section 3 we will show that
if we assume  that   $C^{-1}r^n\leq V(x, r)\leq Cr^n$ for all $x\in X$ and $r>0$,
and that  $L$ satisfies   estimates \eqref{DG}, ${\rm (G_{p_0, 2, m})}$ for some $1\leq p_0<2$
 then for any 
 $p_0< p< 2$,
$$
{\rm (BR^{-1}_{p, p', m})}  \Rightarrow   {\rm (BR^{ \alpha}_{r, s, m})}
$$
if each of the following conditions holds:
\begin{itemize}
\item[(1)]   $  \alpha>  n(1/p-1/2)-1/2$,    $ p_0<r\leq s<p_0'$, $ r <q_\alpha$ and $q_\alpha' < s$ where
$q_\alpha=\max\{1,\frac{2n}{n+1+2\alpha}\}$.

\item[(2)]   $ n(1/p-1/2)-1/2 \ge  \alpha> 0$, $p_0<r\leq s<p_0'$,   $(1/r, 1/s)\in \Delta_\alpha(p,n)$
 and $(1/r, 1/s)$ is strictly below the lines joining  the point $(1/2,1/2)$ to $C(p)$  and $C'(p)$.

\item[(3)]   $ -1/2< \alpha\leq 0$, $p_0<r\leq s<p_0'$,   $(1/r, 1/s)\in \Delta_\alpha(p,n)$ and
  $(1/r, 1/s)$ is strictly below  the lines joining $D(p)$ to $C(p)$; $D(p)$ to $D'(p)$ and $D'(p)$ to $C'(p)$.

\item[(4)]      $-1<\alpha\leq  -1/2 $,     $ p_0<r\leq s<p_0'$,  $ \alpha- \frac{2\alpha}{p}< \frac{1}{r}-\frac{1}{s}$,
 $r<q_\alpha'$ and
 $q_\alpha <s$,
 where
$
1/q_\alpha=
 1+\alpha -{(2\alpha+1)/p}.
$
\end{itemize}
In our paper  we do not investigate the endpoint type results.
The perspective developed in \cite{COSY} suggests that such endpoint
estimates can only be obtained  in the  second order case $m=2$.
\bigskip

Next consider $D=-i(\partial_1, \ldots, \partial_n)$ and  operator $P(D)$ where
$P$ is a real elliptic polynomial of oder $m\ge 2$. The  second part of this paper is
 devoted to  restriction type estimates and Bochner-Riesz means of negative order of
differential operators $P(D)+V$, where   $V(x)$ are nonnegative potentials, see Sections \ref{sec4}-\ref{sec6} below.
In the sequel, we write
  $H_0=P(D)$ and $H=P(D)+V$. If $0\le V\in L^1_{\rm loc}(\R^n)$, then it is well
 known that  $H_0$ and $H$ can be defined as nonnegative self-adjoint operators on $L^2(\R^n)$.
Our approach to investigation of  spectral multiplier operators $F(H)$ is to  obtain
 the restriction type estimates \eqref{e1.4} for  $H$. We are able to do this  under the
 standard { non-degenerate condition} of the homogeneous  elliptic polynomial $P(\xi)$ on $\R^n$:
\begin{eqnarray}\label{e1.5}
{\rm det}\ \Big(\ \frac{\partial^2 P(\xi)}{\partial\xi_i\partial\xi_j}\ \Big)_{n\times n}\neq 0,\ \ \  \xi\neq 0.
\end{eqnarray}
The above condition  is equivalent to the fact  that  the compact smooth hypersurface
$\Sigma=\{\ \xi\in \mathbb{R}^n;\ \ |P(\xi)|=1\ \}$
has nonzero Gaussian curvature everywhere.
 In terms of the Fourier transform
we can express the spectral decomposition of  $H_0$ by the following formula
\begin{eqnarray*}
dE_{H_0}(\lambda)f=\big(\delta(P-\lambda)\widehat{f}\ \big )^{\vee}=\frac{1}{(2\pi i)^n}
\int_{\sqrt[m]{\lambda}\Sigma}e^{ix\xi}\widehat{f}(\xi)\ \frac{d\sigma_\lambda(\xi)}{|\nabla P|}.
\end{eqnarray*}
Hence based on the non-degenerate assumption (\ref{e1.5}), for any $1\le p\le 2(n+1)/(n+3)$,
the spectral measure estimates \eqref{e1.4} for $ H_0$  follow from restriction
theorem on the general surface $\Sigma$, see e.g. \cite{G} and Stein \cite[P. 364]{St2}.

For a non-trivial potential $V$  one can not use the Fourier transform to obtain description of spectral resolution of the operator
  $H=H_0+V$. Therefore  we have to develop another perspective to analyse the spectral properties of  $H$, which is  based on
perturbation techniques and some of ideas developed in Section \ref{sec3}.  In our approach we use  Stone's formula:
\begin{eqnarray}\label{e1.7}
dE_{H}(\lambda)f=(2\pi i)^{-1}\Big(R_H(\lambda+i0)-R_H(\lambda-i0)\Big)f, \ \ \lambda>0,
\end{eqnarray}
where $R_H(\lambda\pm i0)$ are defined as  boundaries of the resolvent $(H-z)^{-1}$ of
$H$ with $z\in \mathbb{C}/[0,\infty)$.  To obtain the required bound of $R_H(\lambda\pm i0)$,
 we  establish the following { uniform Sobolev type estimate } for the operator  $P(D)$
\begin{eqnarray}\label{e1.8}
\big\|u\big\|_{L^q(\R^n)}\le C\ |z|^{\frac{n}{m}(\frac{1}{p}-\frac{1}{q})-1}\
\big\|\big(P(D)-z\big)u\big\|_{L^p(\R^n)}, \ \ z\neq 0,
\end{eqnarray}
where $n>m\ge 2$ and the pairs $(p, q)$ satisfy  the following conditions:
\begin{eqnarray}\label{e1.9}\min\Big(\frac{1}{p}-\frac{1}{2},\
\frac{1}{2}-\frac{1}{q}\Big)> {1\over 2n},\ \
\frac{2}{n+1}<\Big(\frac{1}{p}-\frac{1}{q}\Big)\le \frac{m}{n},
\end{eqnarray}
see Corollary \ref{boundary operator} below.
Note that on the  Sobolev embedding line $1/p-1/q=m/n$
estimate \eqref{e1.8} does not contain the term which depends on $|z|$, which means that  it is
uniform for all $z\in \mathbb{C}$ as its name suggests. The proof of
\eqref{e1.8} is based on analysis of oscillatory integral operator
related to restriction theorem, see e.g.  \cite{So}, which  essentially
relies on the non-degenerate curvature condition on the hypersurface $\Sigma$ above.

In the  case $P(D)=-\Delta$,    estimate
\eqref{e1.8} and their more general non-elliptic variants  were obtained  by Kenig, Ruiz and Sogge
and motivated by certain  unique continuation theorems for the operators $P(D)$, see \cite{KRS}.
Here using  \eqref{e1.8} and the following perturbed resolvent identity
\begin{eqnarray}\label{e1.10}R_H(\lambda\pm i0)=R_{H_0}(\lambda\pm i0)\Big(I+VR_{H_0}(\lambda\pm i0)\Big)^{-1},\ \ \lambda>0,
\end{eqnarray}
we will verify  $L^p$-version of  the  { limiting absorption
principle} \eqref{e1.7} for $H$.
Some versions of \eqref{e1.10} and the  { limiting absorption
principle} were  used   by  Agmon in his celebrated scattering work \cite{Ag} on different
weighted subspaces of $L^2(\R^n)$.

In Theorem \ref{th5.6} below, based on  the limiting absorption
principle and uniform Sobolev estimate
we prove that there exists a constant $c_0>0$ 
such that if
\begin{eqnarray}\label{e1.11}
 \|V\|_{n\over m}+\sup_{y\in \mathbb{R}^{n}}
 \int_{\mathbb{R}^{n}}\frac{|V(x)|}{|x-y|^{n-m}}dx\le c_0, \end{eqnarray}
then the spectral measure estimates \eqref{e1.4} hold for $H=P(D)+V$ for all
$ 1\le p <\min \Big(\frac{2(n+1)}{n+3},{n\over m}\Big)$. Note that
when $m=2$ and $n\ge3$ then  the range of $p$ is the same as for  the standard Laplace
operator,  see Remark \ref{remark5.9} below.
Note also that if  $V\in L^{\frac{n}{m}-\varepsilon}\cap L^{\frac{n}{m}+\varepsilon}$
then the expression described in \eqref{e1.11} is finite. This provides a large class of
  rough potentials $V$ to which our result can be
 applied. It is an interesting question whether it is enough to assume that the expression defined by
 \eqref{e1.11} is finite instead of being small enough. It is plausible to expect that further sophistication
 of our approach can lead to result of this type but we are not going to study this issue here.

%


  In our approach we need to assume that the semigroup   $e^{-t H}$ generated by $H=P(D)+V$, satisfies estimates \eqref{DG} or \eqref{GE}.
  Now, since $V(x)$ is nonnegative, it is
  comparably easy to show the { Davies-Gaffney estimates} \eqref{DG}
  see Lemma \ref{le6.1} below. In addition  if $m>n$ or $m=2$, then it
  is well-known that the { Gaussian estimates } \eqref{GE} for $e^{-tH}$ always hold
  for all $0\le V\in L^1_{\rm loc}$, see e.g \cite{B-D}. On the other hand, if $4\le m\le n$,
  then generally, the Gaussian bound of $e^{-tH}$ may  fail to hold. We describe some results of this type in Section \ref{sec6}
   but do not discuss here all relevant  details, instead  we refer the reader to \cite{D2, DDY}.

 The layout of the paper is as follows.  In Section   2     we recall some basic properties of
heat kernels  and state
some known  spectral multiplier results.
 In   Section 3 we will show that,
at an abstract level,  Bochner-Riesz means with negative index
implies spectral multiplier estimates corresponding
to functions supported in dyadic intervals, which  can be used to study
 $L^p \to  L^q$ mapping properties  of   Bochner-Riesz means with negative index.
In Section  4   we  prove the uniform Sobolev estimate \eqref{e1.8}
	 for constant coefficient higher order elliptic operators  on ${\mathbb R}^n$. We then   use the standard  perturbation technique
	  to obtain  estimates of
     spectral projectors  for   elliptic operators $P(D)+V$ with certain potentials $V$ on ${\mathbb R}^n$ in Section 5.
	 From this, we can deduce spectral multiplier estimates
     of these elliptic operators, including Bochner-Riesz summability results
	 in Section 6.

Throughout, the symbols ``$c$" and ``$C$" will denote (possibly
different) constants that are independent of the essential
variables.


\section{Preliminaries }\label{sec2}
\setcounter{equation}{0}

 In this section  we discuss  some basic properties of Gaussian, Davies-Gaffney and Stein-Tomas type estimates.  For $1\le p\le+\infty$, we denote the
norm of a function $f\in L^p(X,{\rm d}\mu)$ by $\|f\|_p$, by $\langle . ,  . \rangle$
the scalar product of $L^2(X, {\rm d}\mu)$, and if $T$ is a bounded linear operator from $
L^p(X, {\rm d}\mu)$ to $L^q(X, {\rm d}\mu)$, $1\le p, \, q\le+\infty$, we write $\|T\|_{p\to q} $ for
the  operator norm of $T$.
For a given  function $F: {\mathbb R}\to {\mathbb C}$ and $R>0$, we define the
function
$\delta_RF:  {\mathbb R}\to {\mathbb C}$ by putting
 $\delta_RF(x)= F(Rx).$  Given $p\in [1, \infty]$, the conjugate exponent $p'$
is defined by $1/p +1/p' =1.$

For a function $W:M\rightarrow \mathbb{R}$,  let $M_{W}$ the
operator of multiplication by $W$, that is
$$(M_{W}f)(x)=W(x)f(x).$$ In the sequel, we shall identify the function $W$ and the operator $M_W$. That is, if $T$ is a linear operator, we shall denote by $W_1TW_2$ the operator  $M_{W_1}TM_{W_2}$.
We also set $V^{\alpha}_t(x)=V(x,t)^{\alpha}$.


\medskip

\noindent
\noindent
\subsection{Gaussian   estimates and Davies-Gaffney estimates}

\begin{proposition}\label {prop2.1}
Let $m\geq 2$ and $1\leq p< 2$.
 Let  $L$  be  a non-negative self-adjoint operator on $L^2(X)$ satisfying
Davies-Gaffney estimates \eqref{DG} and condition \eqref{GP}.
     Then for all $p< r\leq q< p'$ and for all $\alpha, \beta\geq 0$
such that $\alpha+\beta=1/r-1/q$
\begin{eqnarray}\label{e2.4}\hspace{1cm}
 \| V_t^{\alpha} e^{-t^mL}  V_t^{\beta}
\|_{r\to q} \le C,
\end{eqnarray}
and
\begin{eqnarray}\label{e2.5}
 \big\|  V_t^{\alpha} (I+t^m  L)^{-N/m} V_t^{\beta} \big\|_{r \to q}
 \leq  C
\end{eqnarray}
for every  $N>n(1/r-1/q)$.
\end{proposition}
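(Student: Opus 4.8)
The plan is to deduce both estimates from off‑diagonal bounds for the heat semigroup together with a Schur‑type summation over a covering of $X$ by balls of radius $t$, and then to obtain \eqref{e2.5} from \eqref{e2.4} by subordination. The starting point is the well‑known fact (see e.g. \cite{BK1, Blu, DOS}) that \eqref{DG} and \eqref{GP} together imply, for all $x,y\in X$ and $t>0$,
\begin{equation*}
\big\|P_{B(x,t)}e^{-t^mL}P_{B(y,t)}\big\|_{p\to 2}\le C\,V(y,t)^{\frac12-\frac1p}\exp\Big(-c\big(d(x,y)/t\big)^{\frac{m}{m-1}}\Big);
\end{equation*}
by self‑adjointness of $L$ the dual bound holds with $x,y$ (and $p,p'$) interchanged, and composing the two after writing $e^{-t^mL}=e^{-t^mL/2}e^{-t^mL/2}$ and inserting a partition of unity subordinate to the covering yields the $L^p\to L^{p'}$ version with the volume factor $\big(V(x,t)V(y,t)\big)^{1/2-1/p}$ and the same sub‑Gaussian decay.

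Next I would fix a maximal $t$‑separated family $\{x_j\}\subset X$ and disjoint Borel sets $E_j$ with $B(x_j,t/3)\subset E_j\subset B_j:=B(x_j,t)$, $\bigcup_jE_j=X$, and write $v_j=V(x_j,t)$, $d_{jk}=d(x_j,x_k)$. By the doubling condition $V_t\approx v_j$ on $E_j$, and Hölder's inequality (legitimate because $p\le r\le q\le p'$) applied to the displayed $L^p\to L^{p'}$ bound gives the building block
\begin{equation*}
\big\|\chi_{B_j}e^{-t^mL}\chi_{B_k}\big\|_{r\to q}\le C\,v_j^{\frac1q-\frac12}\,v_k^{\frac12-\frac1r}\,\exp\Big(-c\big(d_{jk}/t\big)^{\frac{m}{m-1}}\Big).
\end{equation*}
Writing $f=\sum_kg_k$ with $g_k=\chi_{E_k}f$ and using $V_t^\alpha\approx v_j^\alpha$ on $E_j$, $V_t^\beta\approx v_k^\beta$ on $E_k$, this leads to
\begin{equation*}
\big\|V_t^\alpha e^{-t^mL}V_t^\beta f\big\|_q^q\le C\sum_j\Big(\sum_k v_j^{\alpha+\frac1q-\frac12}v_k^{\beta+\frac12-\frac1r}\,e^{-c(d_{jk}/t)^{\frac{m}{m-1}}}\,\|g_k\|_r\Big)^q .
\end{equation*}
The point I expect to be the main technical step is the summation: the hypothesis $\alpha+\beta=1/r-1/q$ forces the two volume exponents to be opposite, so the weight equals $(v_j/v_k)^{\alpha+1/q-1/2}$, and since $\{x_j\}$ is $t$‑separated in a doubling space, $v_j/v_k\le C(1+d_{jk}/t)^n$; the resulting polynomial factor is absorbed by the exponential, leaving a kernel $\gamma_{jk}:=e^{-c'(d_{jk}/t)^{m/(m-1)}}$ whose row and column sums are uniformly bounded (again by doubling). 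Schur's lemma on $\ell^q$ together with the embedding $\ell^r\hookrightarrow\ell^q$ (valid since $r\le q$) then gives
\begin{equation*}
\big\|V_t^\alpha e^{-t^mL}V_t^\beta f\big\|_q^q\le C\sum_j\Big(\sum_k\gamma_{jk}\|g_k\|_r\Big)^q\le C\sum_k\|g_k\|_r^q\le C\Big(\sum_k\|g_k\|_r^r\Big)^{q/r}=C\|f\|_r^q ,
\end{equation*}
which is \eqref{e2.4}.

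Finally, for \eqref{e2.5} I would use the subordination identity $(I+t^mL)^{-N/m}=\Gamma(N/m)^{-1}\int_0^\infty s^{N/m-1}e^{-s}e^{-st^mL}\,ds$, so that $V_t^\alpha(I+t^mL)^{-N/m}V_t^\beta$ is an average of $V_t^\alpha e^{-st^mL}V_t^\beta$. Setting $\tau=s^{1/m}t$ and applying \eqref{e2.4} at scale $\tau$ gives $\|V_\tau^\alpha e^{-\tau^mL}V_\tau^\beta\|_{r\to q}\le C$; comparing $V_t$ with $V_\tau$ by doubling produces no loss when $s\ge1$ (since $V_t\le V_\tau$) and the factor $s^{-n(\alpha+\beta)/m}=s^{-n(1/r-1/q)/m}$ when $0<s<1$. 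Hence
\begin{equation*}
\big\|V_t^\alpha(I+t^mL)^{-N/m}V_t^\beta\big\|_{r\to q}\le C\int_0^1 s^{\frac Nm-\frac{n}{m}(\frac1r-\frac1q)-1}e^{-s}\,ds+C\int_1^\infty s^{\frac Nm-1}e^{-s}\,ds ,
\end{equation*}
and both integrals are finite precisely when $N>n(1/r-1/q)$, which proves \eqref{e2.5}.
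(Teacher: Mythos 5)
Your argument is essentially the same as the paper's: both derive off-diagonal estimates at intermediate exponents by interpolating the on-diagonal $L^p\to L^2$ bound from $({\rm G}_{p,2,m})$ against Davies--Gaffney, and both then obtain \eqref{e2.5} from \eqref{e2.4} via the Gamma-integral subordination and the doubling comparison between $V_t$ and $V_{s^{1/m}t}$. The only structural difference is that after interpolating, the paper simply cites point (ii) of Proposition~2.1 of Blunck--Kunstmann \cite{BK3} as a black box to pass from the off-diagonal $L^u\to L^{u'}$ bound \eqref{e2.7} to the weighted $L^r\to L^q$ bound \eqref{e2.4}, whereas you unwind that lemma: you fix a $t$-separated net, use H\"older on each pair of cells to localize the exponents, exploit the cancellation $\alpha+\beta=1/r-1/q$ to reduce the volume factors to a ratio $(v_j/v_k)^{\gamma}$ controllable by doubling, absorb it into the sub-Gaussian tail, and finish with Schur on $\ell^q$ and $\ell^r\hookrightarrow\ell^q$. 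That is precisely the content of the BK3 proposition, so your version is a self-contained expansion of the same argument rather than a different route.

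One small slip to flag: your stated ``starting point,''
\[
\big\|P_{B(x,t)}e^{-t^mL}P_{B(y,t)}\big\|_{p\to 2}\le C\,V(y,t)^{\frac12-\frac1p}\exp\Big(-c\big(d(x,y)/t\big)^{\frac{m}{m-1}}\Big),
\]
is not deducible at the endpoint exponent $p$ from $({\rm DG}_m)$ and $({\rm G}_{p,2,m})$ alone; the Riesz--Thorin interpolation between the on-diagonal $L^p\to L^2$ bound and the $L^2\to L^2$ Davies--Gaffney bound only produces it for $\tilde p\in(p,2)$, with the volume exponent changing accordingly to $\tfrac12-\tfrac1{\tilde p}$. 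This is exactly why the paper introduces $\tilde p$ in \eqref{e2.7}. Since the proposition only requires $p<r\le q<p'$ with strict inequalities, you can choose $\tilde p$ with $p<\tilde p\le r$ and run your argument verbatim with $\tilde p$ in place of $p$; so the gap is cosmetic, but it should be fixed in the write-up.
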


\begin{proof} From condition \eqref{GP}
\begin{eqnarray}\label{e2.6}
  \| P_{ B(x, t)} e^{-t^mL} P_{B(y,t)} \|_{p\to p'} \le C V(x,t)^{{1\over 2}-{1\over p}}V(y,t)^{{1\over p'}-{1\over 2}}.
\end{eqnarray}
Let $\tilde{p}\in (p, 2)$. Note that it follows from the doubling condition \eqref{e2.2} that
\begin{equation*}
V(y,\rho)\leq C\left( 1+{\frac{d(x,y)}{ \rho}}\right)^{n} V(x,\rho) \quad
\forall\,\rho >0,\,x,y\in X
\end{equation*}
From the above estimate, \eqref{e2.6} and  Davies-Gaffney estimates \eqref{DG},
the Riesz-Thorin interpolation theorem give    the following $L^{\tilde{p}}-L^{\tilde{p}'}$ off-diagonal estimate such that
there exist constants $C, c'>0$ such that
for all $t>0$,  and all $x,y\in X,$
\begin{eqnarray}\label{e2.7}\hspace{1cm}
 \| P_{ B(x, t)} e^{-t^mL} P_{B(y,t)}
\|_{u\to u'} \le C V(x,t)^{ \frac{1}{\tilde{p}'} - \frac{1}{\tilde{p}}} \exp\Big(-c'\Big({d(x,y) \over    t }\Big)^{m\over m-1}\Big).
\end{eqnarray}
By (ii) of  \cite[Proposition 2.1]{BK3}, we obtain that for $\tilde{p}\leq r\leq q\leq \tilde{p}'$ and for all $\alpha, \beta\geq 0$
such that $\alpha+\beta=1/r-1/q$
\begin{eqnarray}\label{e2.8}\hspace{1cm}
 \| V_t^{\alpha} e^{-t^mL}  V_t^{\beta}
\|_{r\to q} \le C,
\end{eqnarray}
which proves  \eqref{e2.4}.

Next,   for  $t > 0$,
\begin{eqnarray*}
  (I+t^m  L)^{-N/m}=
 C_N \int_0^{\infty}    e^{-s} s^{{N/m}-1}
 e^{-st^mL}  ds
\end{eqnarray*}
for some $C_N.$ It then follows that
 for every $p<r\leq q<p'$,
\begin{eqnarray}\label{e2.9}
\left\| V_t^{\alpha} (I+t^m  L)^{-N/m} V_t^{\beta}\right\|_{r\to q}
 &\leq&C_N\int_0^{\infty}    e^{-s} s^{{N/m}-1}
 \left\| V_t^{\alpha}  e^{-st^mL}V_t^{\beta} \right\|_{r\to q}{ds}.
\end{eqnarray}
Observe that for every $z\in X$,    if $s<1$, then
$$
V(z, t)\leq Cs^{-n/m}V(z, s^{1/m}t)
$$
 and if $s>1$, then $V(z, t)\leq C V(z, s^{1/m}t)$.
Estimate
(\ref{e2.9})  yields \eqref{e2.5} for   $N>n(1/r-1/q)$.  This ends
  the proof.
 \end{proof}

\subsection{Stein-Tomas restriction type condition}
Let us recall the restriction type estimates
$({\rm ST}^{q}_{p, 2, m})$, which were originally introduced
in~\cite{DOS} for $p = 1$, and then in~\cite{COSY} for
general~$1<p<2$.
Consider a non-negative self-adjoint operator $L$  and exponents
$p $ and $q$ such that $1\leq p< 2$ and $1\leq
q\leq\infty$. Following \cite{SYY}, we say that  $L$ satisfies the
    {  Stein-Tomas restriction type condition}   if
  for any $R>0$ and all Borel functions $F$ such that $\supp F \subset [0, R]$,
$$
\big\|F(\SL)P_{B(x, \rho)} \big\|_{p\to 2} \leq CV(x,
\rho)^{{1\over 2}-{1\over p}} \big( R\rho \big)^{n({1\over p}-{1\over
2})}\big\|\delta_RF\big\|_{q}
\leqno{\rm (ST^{q}_{p, 2, m})}
$$
  for all
  $x\in X$ and all $\rho\geq 1/R$.

As we mentioned in Introduction this condition  is  motivated by analysis of the standard
Laplace operator $\Delta=-\sum_{i=1}^n\partial^2_{x_i}$ on ${\mathbb R^n}$.
It is not difficult to observe, see  \cite[Proposition 2.4]{COSY}, that for $q=2$
the condition ${\rm (ST^{2}_{p, 2, 2})}$ is equivalent to
the $(p,2)$ Stein-Tomas restriction estimate
 $$
 \|dE_{\sqrt{\Delta}}(\lambda)\|_{p\to p'} \leq C\lambda^{n(1/p- 1/p')-1}
 $$
 for all $ 1 \le p\le \frac{2(n+1)}{ n+3}$.

 Note that   if  condition ${\rm (ST^{q }_{p, 2, m})} $ holds for  some $q\in [1, \infty)$,
 then ${\rm (ST^{\tilde{q}}_{p, 2, m})} $ is automatically valid  for all $\tilde{q}\geq q$ including the case
 $\tilde{q}=\infty$.
It is known that if ${q}=\infty$, then the condition  ${\rm (ST^{{\infty}}_{p, 2, m})} $
follows from the standard elliptic estimates, that is, to be more precise the conditions
${\rm (ST^{{\infty}}_{p, 2, m})}$ and \eqref{GP} are equivalent,  
 see for instance \cite[Proposition 2.2]{SYY}.

 \smallskip

We start with stating very general spectral multiplier result.  Point $(i)$ of the following
proposition can be easily applied  in a wide range of situations but usually does not give
the sharp result and the differentiability assumption can often be relaxed. However,
this general statement helps to avoid nonessential technicalities while discussing sharp
spectral multiplier results.  Point $(ii)$ usually leads to optimal results but verifying condition
${\rm (ST^q_{p, 2, m})}$ is quite difficult.
For the proof, we refer the reader to   \cite{Blu} (for point $(i)$)  and \cite[Theorem 5.1]{SYY}
(for both parts).
 Recall that $n$ is the  doubling dimension from   condition \eqref{e2.2} and
  $\eta\in C_c^{\infty}(0, \infty)$ is  a non-zero  auxiliary function and $C_k$ is a space of
  $k$ times continuously differentiable functions on the real line.

 \begin{proposition}\label{prop2.2}
Let  $L$  be  a non-negative self-adjoint operator   on $L^2(X)$ satisfying
Davies-Gaffney estimates \eqref{DG}. Then

\begin{itemize}
\item[(i)] Assume that  the  condition  ${\rm (G_{p, 2, m})}$ holds for some $p$ satisfying
  $1\leq p<2$.  Then for any  bounded Borel
function $F$ such that
 $$
 \sup_{t>0}\|\eta\delta_tF\|_{C^k}<\infty
 $$
 for some integer $k>n(1/p-1/2) $,
  the operator  $F(L)$ is bounded on $L^{r}(X)$ for all $p<r<p'$.

 \smallskip

\item[(ii)]  Assume that   the condition  ${\rm (ST^q_{p, 2, m})}$ holds for some $p, q$ satisfying
  $1\leq p<2$ and $1\leq  q\leq \infty$. Then for any  bounded Borel
function $F$ such that
$\sup_{t>0}\|\eta\, \delta_tF\|_{W^{\alpha}_q}<\infty $ for some
$\alpha>\max\{n(1/p-1/2),1/q\}$,  the operator
$F(L)$ is bounded on $L^r(X)$ for all $p<r<p'$.
 \end{itemize}
\end{proposition}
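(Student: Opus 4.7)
The plan is to follow the classical dyadic-decomposition approach for spectral multipliers in the Davies--Gaffney setting. First I would fix a smooth partition of unity $\{\phi_\ell\}_{\ell\in\Z}$ on $(0,\infty)$ with $\supp\phi_\ell\subset[2^{\ell-1},2^{\ell+1}]$, write
$$
F(\SL)=\sum_{\ell\in\Z} H_\ell(2^\ell\SL), \qquad H_\ell(\lambda)=F(2^{-\ell}\lambda)\phi_0(\lambda),
$$
and note that each $H_\ell$ is supported in a fixed interval $[1/2,2]$ with $\|H_\ell\|_{C^k}$ (respectively $\|H_\ell\|_{W^\alpha_q}$) controlled uniformly in $\ell$ by the hypothesis on $F$, provided $\phi_0$ coincides with the auxiliary $\eta$. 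Thus the problem reduces to a uniform-in-$t$ $L^r\to L^r$ bound for $H(t\SL)$ with $H$ supported in $[1/2,2]$, coupled with a Littlewood--Paley summation obtained from the functional calculus of $L$.

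The core technical step is an annular off-diagonal estimate of the form
$$
\big\|P_{A_k}\, H(t\SL)\, P_{B(y,t)}\big\|_{p\to 2}\le C\, V(y,t)^{1/2-1/p}(1+2^k)^{-\gamma}\|H\|_\ast,
$$
with $A_k=B(y,2^{k+1}t)\setminus B(y,2^k t)$ and $\gamma$ matching the smoothness exponent. For part (i) this is proved by factoring $H(t\SL)=[H(t\SL)(I+t^mL)^{N/m}]\cdot(I+t^mL)^{-N/m}$: functional calculus controls the bracketed factor in $L^2$ by $\|H\|_{C^k}$ for suitable $k>N$, while the resolvent absorbs the $L^p\to L^2$ step via Proposition~\ref{prop2.1} and, through its representation as $\int_0^\infty e^{-s}s^{N/m-1}e^{-st^mL}\,ds$, inherits the $(1+2^k)^{-\gamma}$ spatial decay from \eqref{DG}. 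For part (ii) the condition $({\rm ST}^q_{p,2,m})$ packages the $L^p\to L^2$ bound directly through the $L^q$ norm of the rescaled multiplier; writing $H(\SL)$ through its Fourier inversion formula and using the $L^2$-bound on $\cos(\tau\SL)$ coming from \eqref{DG} permits one to distribute $\alpha$ derivatives between the $L^q$ factor and the spatial weight, with the hypothesis $\alpha>\max\{n(1/p-1/2),1/q\}$ being precisely what makes the resulting integral converge.

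Once these annular bounds are in hand, I would apply the generalized Calder\'on--Zygmund theorem of Blunck--Kunstmann: perform a CZ decomposition $f=g+\sum_i b_i$ with $b_i$ supported in $B_i$ of radius $r_i$, and split $H(t\SL)b_i$ using the factor $(I-e^{-r_i^m L})^N$. The part containing $(I-e^{-r_i^m L})^N b_i$ enjoys the off-diagonal decay above, while the complementary term is handled by the $L^2$ orthogonality of $b_i$ combined with $\|H(t\SL)\|_{2\to 2}\le\|H\|_\infty$. This yields a weak $(p,p)$ bound, and then interpolation with the $L^2\to L^2$ estimate together with duality gives the full range $p<r<p'$ uniformly in $t$. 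The summation over $\ell$ is closed off by a Littlewood--Paley square-function estimate for $L$; the conditions $k>n(1/p-1/2)$ in (i) and $\alpha>\max\{n(1/p-1/2),1/q\}$ in (ii) are consumed here as the smoothness required to beat the $\ell^1$-summation of the dyadic pieces.

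The main obstacle is part (ii). Because $W^\alpha_q$ with $q$ small does not embed into any $C^k$ with $k$ near $n(1/p-1/2)$, one cannot reduce (ii) to (i); the off-diagonal analysis has to be carried out entirely in an $L^q$ scale, using $({\rm ST}^q_{p,2,m})$ as the abstract substitute for the Stein--Tomas restriction theorem and running a Plancherel-type argument against $dE_L$ rather than against pointwise derivatives of $H$. The bookkeeping is delicate: every $\eps$ loss in the interpolation or in the Fourier-side weight must be absorbed by the slack between $\alpha$ and $\max\{n(1/p-1/2),1/q\}$, and part (i) then emerges as the formal limit $q=\infty$.
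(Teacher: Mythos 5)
The paper itself gives no proof here: it simply cites Blunck for part (i) and
Sikora--Yan--Yao \cite[Theorem~5.1]{SYY} for both parts. Your proposal reconstructs
the broad outline of those arguments -- dyadic decomposition of $F$,
annular $L^p\to L^2$ off--diagonal bounds obtained either from the
resolvent $\bigl(I+t^mL\bigr)^{-N/m}$ for (i) or from
$({\rm ST}^q_{p,2,m})$ together with the wave--operator Fourier representation for (ii),
followed by a Blunck--Kunstmann type Calder\'on--Zygmund decomposition with
the smoothing factor $\bigl(I-e^{-r_i^m L}\bigr)^N$ -- and this is indeed the
right framework.

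There is, however, a genuine gap in your closing step: you claim that
after establishing a uniform weak $(p,p)$ bound for each rescaled piece
$H_\ell\bigl(2^\ell\sqrt[m]{L}\bigr)$, ``the summation over $\ell$ is
closed off by a Littlewood--Paley square-function estimate for $L$.''
Under the standing hypotheses ${\rm (DG_m)}$ and ${\rm (G_{p,2,m})}$ alone,
an $L^r$ Littlewood--Paley inequality for the abstract operator $L$ is
not available -- proving one would require a spectral multiplier theorem
of essentially the same strength as the statement you are trying to
establish, so the argument as written is circular. Moreover, even
granted such an inequality, uniform $L^r$ bounds on the individual
dyadic pieces do not by themselves produce an $L^r$ bound for their sum.
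The correct mechanism, used in both references cited by the paper, is to
run the Calder\'on--Zygmund decomposition once for the \emph{full}
operator $F\bigl(\sqrt[m]{L}\bigr)$: for each bad function $b_i$
supported on a ball of radius $r_i$, one splits the multiplier at
frequency scale $r_i^{-1}$, treats the low frequencies via the smoothing
operator $\bigl(I-e^{-r_i^m L}\bigr)^N$ and $L^2$ orthogonality, and
sums the high-frequency pieces using the off-diagonal decay
(this is where $k>n(1/p-1/2)$, resp.\
$\alpha>\max\{n(1/p-1/2),1/q\}$, is consumed -- against summation over
the annuli $2^k r_i$, not against a separate dyadic-in-frequency
$\ell^1$ sum). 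With that correction, the strategy you outline aligns
with the cited proofs.
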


A significant example of spectral multipliers are  Bochner-Riesz means.
Let us recall that Bochner-Riesz operators of  index $\alpha$ for a non-negative self-adjoint operator $L$
are defined by the formula
\begin{equation*}
 S_{R}^{\alpha}( L) ={1\over \Gamma(\alpha+1)}
 \left(I-{L\over R}\right)_+^{\alpha},\ \ \ \ R>0.
\end{equation*}
\noindent

Bochner-Riesz analysis studies the range of ${\alpha}$
for which the operators  $ S_{R}^{\alpha}( L)$ are uniformly bounded on $L^p$.
Applying spectral multiplier theorems to study boundedness of Bochner-Riesz means is often
an efficient test to check if the considered result is sharp or not.

\begin{coro}\label{coro2.3}
Suppose that  the
 operator $L$ satisfies
Davies-Gaffney estimate  \eqref{DG}  and condition  ${\rm (ST^q_{p, 2, m})}$
with some $1\leq p<2$ and $1\leq q\leq \infty$.
Then for all  $p<r<p'$ and ${\alpha}> n(1/p-1/2)-1/q$,
\begin{eqnarray}\label {e2.11}
\sup_{R>0}\left\|S_{R}^{\alpha}( L)\right\|_{r\to r}\leq C.
\end{eqnarray}
\end{coro}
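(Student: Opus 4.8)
The plan is to deduce the corollary directly from Proposition~\ref{prop2.2}(ii), whose hypotheses \eqref{DG} and ${\rm (ST^q_{p,2,m})}$ are exactly those assumed here. Fix $p<r<p'$ and a Bochner--Riesz index $\alpha>n(1/p-1/2)-1/q$; I would first note that we may assume $\alpha>0$, since for $\alpha\le0$ the symbol $(1-\lambda/R)^\alpha_+$ is either unbounded or equal to $\chi_{[0,R]}$, and then $S^\alpha_R(L)$ cannot be bounded simultaneously on $L^r$ and on $L^{r'}$ for $p<r<2<r'<p'$. By the quantitative form of Proposition~\ref{prop2.2}(ii) — the proof in \cite[Theorem~5.1]{SYY} actually yields $\|F(L)\|_{r\to r}\le C\sup_{t>0}\|\eta\,\delta_tF\|_{W^s_q}$ once $s>\max\{n(1/p-1/2),1/q\}$ — it suffices to produce such an $s$ for which
\[
\sup_{R>0}\ \sup_{t>0}\ \big\|\eta\,\delta_t S^\alpha_R\big\|_{W^s_q}<\infty .
\]

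Next I would remove the parameter $R$ by scaling: since $\delta_t(S^\alpha_R)=\delta_{t/R}(S^\alpha_1)$, the double supremum equals $\sup_{u>0}\|\eta\,\delta_u\phi\|_{W^s_q}$ with $\phi:=S^\alpha_1$, which involves no $R$, so any bound obtained is automatically uniform in $R$ and gives \eqref{e2.11}. Now $\phi(\lambda)=\Gamma(\alpha+1)^{-1}(1-\lambda)^\alpha_+$ is smooth on $[0,1)$, vanishes on $[1,\infty)$, and near $\lambda=1$ is a smooth multiple of $(1-\lambda)^\alpha_+$. A routine case split on the position of the singularity relative to $\supp\eta$ then settles the matter: if $u$ is so small that $\supp\eta\subset[0,1/u)$, then $\delta_u\phi$ is smooth on $\supp\eta$ with all derivatives bounded uniformly in $u$; if $u$ is so large that $1/u<\inf\supp\eta$, then $\eta\,\delta_u\phi\equiv0$; and for $u$ in the remaining compact range the singularity at $\lambda=1/u$ sits inside $\supp\eta$, so everything reduces to the local Sobolev regularity of the truncated power, namely $\|\psi\,(c-\cdot)^\alpha_+\|_{W^s_q}\le C$ uniformly for $c$ in a compact set and $\psi\in C_c^\infty(\R)$, valid precisely when $s<\alpha+1/q$ (with the convention $1/\infty=0$). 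Choosing $s$ with $\max\{n(1/p-1/2),1/q\}<s<\alpha+1/q$ — possible since $\alpha+1/q>n(1/p-1/2)$ by hypothesis and $\alpha+1/q>1/q$ because $\alpha>0$ — completes the verification.

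The one step that requires genuine work is this sharp local estimate $\|\psi\,(c-\cdot)^\alpha_+\|_{W^s_q}\le C$ for $s<\alpha+1/q$, together with uniformity of the constant as $c$ ranges over a compact interval; it is classical (difference-quotient/Hardy-type estimates, or the homogeneity of $x^\alpha_+$ combined with Sobolev embedding), and it is exactly here that the threshold $\alpha>n(1/p-1/2)-1/q$ is forced. Everything else is scaling and bookkeeping on $\supp\eta$, so that is where I would concentrate the detailed writing.
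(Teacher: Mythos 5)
Your proposal is correct, and it is almost certainly the same argument that is behind the result the paper cites. The paper itself gives no proof of Corollary~\ref{coro2.3}: it merely says ``we refer the reader to [Corollary~4.4, SYY]''. So there is nothing in this paper to compare against directly, but the route you take --- apply the quantitative $W^{s}_q$-multiplier bound from Proposition~\ref{prop2.2}(ii), observe that $\delta_t S^{\alpha}_R=\delta_{t/R}S^{\alpha}_1$ so the double supremum over $R$ and $t$ collapses to a single dilation parameter $u$, then control $\sup_{u>0}\|\eta\,\delta_u S^\alpha_1\|_{W^{s}_q}$ via the local Sobolev regularity $s<\alpha+1/q$ of the truncated power and a three-way case split on $u$ --- is the standard proof and certainly what [SYY, Cor.~4.4] implements. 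All the numerology checks out: you need $\max\{n(1/p-1/2),1/q\}<s<\alpha+1/q$, which is available precisely when $\alpha>n(1/p-1/2)-1/q$ \emph{and} $\alpha>0$.

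Two small points to polish. First, your reduction to $\alpha>0$ is substantively right but the phrasing is a little loose. You frame it as ``we may assume'', but what you actually observe is that when $n(1/p-1/2)-1/q<0$ the corollary's stated conclusion cannot hold for $\alpha\le 0$ (for $\alpha<0$ the symbol is unbounded, so $S^\alpha_R(L)$ is not even bounded on $L^2$; for $\alpha=0$ one would get uniform $L^r$-boundedness of the spectral projector, which is generally false). So this is not a loss of generality but an identification of an implicit hypothesis $\alpha>0$ the corollary's statement omits; in practice the corollary is only invoked where $n(1/p-1/2)-1/q\ge 0$, but it is worth being explicit. Second, in your first case the bound on $\|\eta\,\delta_u\phi\|_{W^s_q}$ is only uniform if $u$ is bounded away from the reciprocal of $\sup\supp\eta$, since derivatives of $(1-u\lambda)^\alpha_+$ blow up as $u\lambda\to 1^{-}$; so take the ``small $u$'' regime to be, say, $u\le\tfrac{1}{2}(\sup\supp\eta)^{-1}$ and absorb the leftover into the compact middle range. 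These are bookkeeping matters and do not affect the validity of the argument.
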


 \begin{proof}  For the proof, we refer the reader  to  \cite[Corollary 4.4,]{SYY}.
 \end{proof}

\bigskip

\section{Spectral multipliers and Bochner-Riesz means}\label{sec3}
\setcounter{equation}{0}

Assume that $(X,d, \mu)$ satisfies the doubling condition, that is
(\ref{e2.1}).
Suppose $L$
is a nonnegative self-adjoint operator acting on the
space $L^2(X)$. Such an operator admits a spectral
resolution  $E_L(\lambda)$. If $F$ is a real-valued  Borel function $F$ on $[0, \infty)$,
then one can define the operator $F(L)$ by the formula
\begin{equation}\label{e3.1}
F(L)=\int_0^{\infty}F(\lambda) \wrt E_L(\lambda).
\end{equation}
By spectral theory if the function $F$ is bounded, then the operator $F(L)$ is bounded as an
operator acting on $L^2(X)$. Many authors study   necessary conditions on function $F$ to ensure that
$F(L)$ is bounded as operator action on $L^p$ spaces for some range of $p$.
However we are interested here in estimates   of $L^p \to L^q$
norm of $F(L)$ for some $1\le p <q \le \infty$ especial in situation when $F$ is potentially unbounded.

Observe  that for   ${\rm Im} \lambda\not=0$, the resolvent family $(L-\lambda)^{-1}$ is a holomorphic family
of bounded operators on $L^2(X).$ Throughout this article, we assume that:

\smallskip
{\it    The resolvent family
of the operator $L$
extends continuously to the real axis as a bounded operator in a weaker sense, e.g., between weighted
$L^2$-spaces. }

\smallskip

It is then differentiable in $\lambda$ up to the real axis.
This  property  is satisfied  by many operators,
e.g., constant coefficients
higher order elliptic operators $P(D)$ described in Section 4 below.  Under this assumption, we find
 that $E_L(\lambda)$ is differentiable in $\lambda$ and Stone's formula for operator $L$ is valid
\begin{eqnarray*}
{d\over d\lambda} E_{L}(\lambda)={1\over 2\pi i}\left((L-(\lambda+i0))^{-1}-(L-(\lambda-i0))^{-1}\right).
\end{eqnarray*}
In this case we write (abusing notation somewhat)
$\wrt E_L(\lambda)$ for the derivative of $E_L(\lambda)$ with respect to $\lambda.$
Stone's formula gives a mechanism for analysing the spectral measure, namely we need to analyse the limit
of the resolvent $(L-\lambda)^{-1}$ on the real axis, see Sections 4-6 below.

\subsection{Bochner-Riesz means with negative index}\
In the same way in which we defined the Bochner-Riesz means of the operator
$L$ one can also define Bochner-Riesz means of its root of order $m$, that is  $\sqrt[m] L$.
Similarly as before, for every $R>0$ the
Bochner-Riesz means of index  $\alpha$  for   the operator$\sqrt[m] L$
are defined by the formula
\begin{equation}\label{e3.3}
S_{R}^{\alpha} (\SL)  =\frac{1}{\Gamma(\alpha +1)}
\left(I-{ \sqrt[m] L\over R}\right)_+^{\alpha}, \ \ \ \ \ \  \alpha>-1.
\end{equation}
When $\alpha=-1,$ we set $S_{R}^{-1} (\SL)=R^{-1}dE_{\SL}(R).$
Given   some $   \alpha\geq -1 $  and  $1\leq  p< q\leq \infty$,
we say that the Bochner-Riesz mean  $S_R^{\alpha}(  \sqrt[m]L)$ satisfies the $(p,q)$-estimate, if there
 exists a constant $C>0$ such that for all $R>0,$
$$
\|S_R^{\alpha}(  \sqrt[m] L)P_{B(x,\rho)}  \|_{p\to q} \leq C  V(x,\rho)^{{1\over q}-{1\over p}}
(R\rho)^{{n}({1\over p}- {1\over q}) }
 \leqno{\rm (BR^{ \alpha}_{p, q, m})}
$$
for all $x\in X$ and all $\rho\geq 1/R.$

In our first statement of this section  we note that considering the Bochner-Riesz means of the
operators  $L$ and $\sqrt[m]L$ are essentially equivalent under  some
assumptions of the operator $L$.

\medskip
\begin{lemma}\label{le3.1}
Suppose that $(X, d, \mu)$ satisfies the doubling condition  \eqref{e2.1}
and that the semigroup corresponding to
  a non-negative self-adjoint operator $L$  satisfies  estimates \eqref{DG}
  and ${\rm (G_{p_0, 2, m})}$ for some $1\leq p_0<2$.   Then for   every $   \alpha\geq -1$ and
$ 1\leq p_0<p< q< p'_0$,
  ${\rm (BR^{ \alpha}_{p, q, m})} $   is equivalent to
\begin{eqnarray}\label{e3.4}
\left\|S_{R^m}^{\alpha} (  L) P_{B(x,\rho)}  \right\|_{p\to q} \leq C  V(x,\rho)^{{1\over q}-{1\over p}}
(R\rho)^{n({1\over p}- {1\over q}) }
\end{eqnarray}
for all $x\in X$ and all $\rho\geq 1/R.$
  \end{lemma}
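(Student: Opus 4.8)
The plan is to exploit the algebraic identity relating the two Bochner-Riesz functions under the substitution $\lambda\mapsto\lambda^m$. Writing $S_R^\alpha(\sqrt[m]L)$ via the function $\phi_R(\lambda)=\Gamma(\alpha+1)^{-1}(1-\lambda/R)_+^\alpha$ and $S_{R^m}^\alpha(L)$ via $\psi_{R^m}(\lambda)=\Gamma(\alpha+1)^{-1}(1-\lambda/R^m)_+^\alpha$, one has exactly $\phi_R(\lambda)=\psi_{R^m}(\lambda^m)$, hence $S_R^\alpha(\sqrt[m]L)=\psi_{R^m}(L)=S_{R^m}^\alpha(L)$ \emph{as operators}, and similarly for $\alpha=-1$ after interpreting the endpoint via $dE$. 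So the two operators in the statement literally coincide, and the only content of the lemma is that estimate \eqref{e3.4} with the exponent written in terms of $R$ is the same as ${\rm (BR^\alpha_{p,q,m})}$ with the exponent written in terms of $R$ — but a moment's inspection shows the right-hand sides are \emph{identical} as well, namely $CV(x,\rho)^{1/q-1/p}(R\rho)^{n(1/p-1/q)}$ in both, with the same constraint $\rho\ge 1/R$.

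Given this, the real role of the hypotheses \eqref{DG} and ${\rm (G_{p_0,2,m})}$ must be to handle a genuine discrepancy I am glossing over: presumably the intended reading is that ${\rm (BR^\alpha_{p,q,m})}$ for $S_R^\alpha(\sqrt[m]L)$ should be compared with the ``natural'' scaling statement for $S_{R}^\alpha(L)$ itself (radius $1/R$, factor $(R\rho)$), and the lemma records that one may freely pass between the $m$-th root picture and the un-rooted picture at the cost of reparametrizing $R\mapsto R^m$, $\rho$ constraint unchanged, because the localization radius in ${\rm (DG_m)}$ is $t^{1/m}$. So the first step I would take is to make the function-calculus identity $S_R^\alpha(\sqrt[m]L)=S_{R^m}^\alpha(L)$ precise, including the $\alpha=-1$ case where one checks $R^{-1}dE_{\sqrt[m]L}(R)=mR^{m-1}\cdot(R^m)^{-1}dE_L(R^m)$ up to the harmless factor $m$, which is absorbed into $C$ (or more carefully, use the chain rule $dE_{\sqrt[m]L}(\lambda)=m\lambda^{m-1}dE_L(\lambda^m)$).

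Second, having the operator identity, I would verify that the two inequalities have matching geometric factors: $V(x,\rho)^{1/q-1/p}$ is unchanged, and $(R\rho)^{n(1/p-1/q)}$ vs. $(R^{1/m}\cdot\rho)^{\cdots}$ — here is where a reparametrization is needed, and where the doubling condition enters, to compare $V(x,\rho)$ at the two relevant radius scales and to absorb polynomial losses. The hypotheses \eqref{DG} and ${\rm (G_{p_0,2,m})}$ enter through Proposition \ref{prop2.1} (the off-diagonal $L^r\to L^q$ bounds for $e^{-t^mL}$ and resolvent powers), which let me insert a regularizing factor $(I+(R\rho)^{-m}? )$ — more precisely, to pass from a localized estimate at scale $\rho$ to one at scale $1/R$ when $\rho>1/R$, one composes with $e^{-tL}$ or $(I+tL)^{-N}$ for suitable $t\sim R^{-m}$, uses finite-propagation/off-diagonal decay to sum over annuli, and the doubling condition to control the volume ratios; this is exactly the standard ``change of localization radius'' argument. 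I expect the main obstacle to be precisely this bookkeeping: carefully tracking how the $R\mapsto R^m$ substitution interacts with the constraint $\rho\ge 1/R$ (which becomes $\rho\ge R^{-1}$ in one picture and $\rho\ge R^{-1}$ after the substitution $R\to R^m$ would read $\rho\ge R^{-m}$, a \emph{weaker} constraint, so one direction is immediate and the other needs the regularization argument), and showing the off-diagonal sums converge with constants uniform in $R$ and $x$. The $p_0<p<q<p_0'$ restriction is exactly the range in which Proposition \ref{prop2.1} furnishes the needed $L^r\to L^q$ bounds, so all auxiliary estimates are available off the shelf.
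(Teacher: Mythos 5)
Your opening step is false: the two operators $S_R^\alpha(\sqrt[m]L)$ and $S_{R^m}^\alpha(L)$ do \emph{not} coincide. You claim $\phi_R(\lambda)=\psi_{R^m}(\lambda^m)$, i.e.\ $(1-\lambda/R)_+^\alpha=(1-(\lambda/R)^m)_+^\alpha$; with $t=\lambda/R$ this reads $(1-t)_+^\alpha=(1-t^m)_+^\alpha$, which fails for every $m\ge 2$ (e.g.\ $m=2$, $t=1/2$: $(1/2)^\alpha\neq(3/4)^\alpha$). The two Bochner--Riesz functions share the same support $[0,R]$ and the same order of vanishing at $R$, but they are genuinely different, and that difference is the entire content of the lemma. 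Once this premise collapses, the remainder of your proposal — the second-guessing about $(R\rho)$ versus $(R^{1/m}\rho)$, the change-of-localization-radius machinery, the worry that the constraint $\rho\ge 1/R$ reparametrizes differently — is trying to locate a discrepancy in the geometric factors that does not exist. Both sides of the claimed equivalence carry literally identical factors $V(x,\rho)^{1/q-1/p}(R\rho)^{n(1/p-1/q)}$ with the same constraint; the only thing that differs is the spectral cutoff itself.

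What the paper actually does is factor $1-s=(1-s^{1/m})\bigl(1+\sum_{k=1}^{m-1}s^{k/m}\bigr)$, giving (after inserting a compactly supported cutoff $\varphi$ to tame the polynomial factor away from the unit interval)
\begin{equation*}
(1-s)_+^\alpha=(1-s^{1/m})_+^\alpha\Bigl(1+\sum_{k=1}^{m-1}s^{k/m}\Bigr)^\alpha\varphi(s),
\qquad
(1-s^{1/m})_+^\alpha=(1-s)_+^\alpha\Bigl(1+\sum_{k=1}^{m-1}s^{k/m}\Bigr)^{-\alpha}\varphi(s),
\end{equation*}
so that $S_{R^m}^\alpha(L)$ and $S_R^\alpha(\sqrt[m]L)$ differ by the multiplier $\bigl(1+\sum_{k=1}^{m-1}(\sqrt[m]L/R)^k\bigr)^{\pm\alpha}\varphi(L/R^m)$. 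That multiplier is smooth and compactly supported, and the standing hypotheses \eqref{DG} and ${\rm (G_{p_0,2,m})}$ are used precisely to invoke Proposition~\ref{prop2.2}(i) and conclude it is bounded on $L^r$ for all $p_0<r<p_0'$, uniformly in $R$; composing then transfers the $(p,q)$-estimate from one operator to the other in both directions. Your proposal never produces this smooth bounded correction factor, which is the one nontrivial idea in the proof.
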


 \begin{proof} Let $\varphi$ be a non-zero $C_0^{\infty} $ function on ${\mathbb R}$
 such that $\varphi(s)=1$ if $s\in [-1/2, 3/2]$ and $\varphi(s)=0$ if $|s|\geq 2$.  For $s>0$, we write
\begin{eqnarray}\label{e3.5}
(1-s)^{\alpha}_+=(1-s^{1/m})_+^\alpha
\big(1+\sum_{k=1}^{m-1}s^{k/m}\big)^{\alpha}\varphi(s)
\end{eqnarray}
 and
\begin{eqnarray}\label{e3.6}
(1-s^{1/m})_+^\alpha=(1-s)^{\alpha}_+
\big(1+\sum_{k=1}^{m-1}s^{k/m}\big)^{-\alpha}\varphi(s).
\end{eqnarray}
We apply Proposition  \ref{prop2.2} to obtain that for $p_0<r<p_0'$, there exists a constant $C>0$ independent of $R$
such that
\begin{eqnarray}\label{e3.7}
\left\|\left(1+\sum_{k=1}^{m-1}\left({\sqrt[m]L\over R}\right)^k\right)^{\pm \alpha}
 \varphi \left({ L\over R^m}\right) \right\|_{r\to r}
\leq C.
\end{eqnarray}
This, together with \eqref{e3.5} and \eqref{e3.6}, proves   Lemma~\ref{le3.1}.
 \end{proof}

It is easy to note that for the standard Bochner-Riesz means corresponding to the Fourier transform
 and the standard Laplace operator,  condition    ${\rm (BR^{ \alpha}_{p, q, m})} $
implies the same estimates  for all exponents $r,s$ such that  $ 1<r\leq p< q\leq s< \infty$.
In our next statement we show that this is a quite general situation limited only by a range of
$L^p$ spaces on which the semigroup generated by $L$ acts and enjoys generalised Gaussian estimates.

\begin{lemma}\label{le3.2}
Suppose that  there exists a constant $C>0$ such that  $C^{-1}\rho^n\leq V(x, \rho)\leq C\rho^n$ for all $x\in X$ and $\rho>0$.
Next assume that $L$ is
  a non-negative self-adjoint operator  acting on $L^2(X)$ satisfying estimates \eqref{DG}
  and ${\rm (G_{p_0, 2, m})}$ for some $1\leq p_0<2$ and that $   \alpha\geq -1 $.  Then
   ${\rm (BR^{ \alpha}_{p, q, m})} $ with $ p_0<  p< q< p'_0$
  implies $  {\rm (BR^{ \alpha}_{r, s, m})}$  for  all $ p_0<r\leq p< q\leq s< p'_0$.

In particular, if the operator $L$ satisfies the Gaussian estimate \eqref{GE}, then
  ${\rm (BR^{ \alpha}_{p, q, m})} $ implies $  {\rm (BR^{ \alpha}_{r, s, m})}$  for all
$ 1\leq r\leq  p< q\leq s\leq\infty$.
  \end{lemma}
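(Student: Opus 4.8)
The plan is to reduce the $(r,s)$-estimate to the $(p,q)$-estimate by ``sandwiching'' the Bochner--Riesz operator between two factorizations that gain the extra integrability, using the generalized Gaussian estimate \eqref{GP} (equivalently, Proposition~\ref{prop2.1}). The key observation is that $S_R^\alpha(\SL) = (I+L/R^m)^{-N/m}\,(I+L/R^m)^{N/m}S_{R^m}^\alpha(L)\,(I+L/R^m)^{-N/m}\cdot(I+L/R^m)^{N/m}$, so one should write
$$
S_R^\alpha(\SL)P_{B(x,\rho)} = \big[(I+L/R^m)^{-N/m}\big]\cdot\big[(I+L/R^m)^{N/m}S_R^\alpha(\SL)(I+L/R^m)^{N/m}\big]\cdot\big[(I+L/R^m)^{-N/m}P_{B(x,\rho)}\big].
$$
The middle factor is $\widetilde F(\SL)$ for $\widetilde F(\lambda)=(1+\lambda^m/R^m)^{2N/m}(1-\lambda/R)_+^\alpha/\Gamma(\alpha+1)$, a function supported in $[0,R]$ whose dyadic pieces are controlled in $W^{\beta,1}$; more to the point, since $\widetilde F$ differs from a multiple of $(1-\lambda/R)_+^\alpha$ only by a smooth bounded factor on the support, the operator $\widetilde F(\SL)P_{B(x,\rho)}$ satisfies the same $(p,q)$-bound as ${\rm (BR^\alpha_{p,q,m})}$, up to a constant (this is the same device as in the proof of Lemma~\ref{le3.1}, via Proposition~\ref{prop2.2}). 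The outer factors $(I+L/R^m)^{-N/m}$, localized to balls of radius $\rho\ge 1/R$, are handled by \eqref{e2.5} of Proposition~\ref{prop2.1}: choosing $N>n(1/r-1/p)$ on the left and $N>n(1/q-1/s)$ on the right, one gets that $(I+L/R^m)^{-N/m}$ maps $L^r\to L^p$ (resp.\ $L^q\to L^s$) on the relevant scale with the correct power of $R\rho$, after absorbing volume factors using $V(x,\rho)\simeq\rho^n$.

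Concretely, I would first fix $N$ large, split $(I+L/R^m)^{-N/m}=(I+L/R^m)^{-N_1/m}(I+L/R^m)^{-N_2/m}$ on one side if needed, and use the Gaussian-type off-diagonal bounds to write, with $V_{1/R}\simeq R^{-n}$ absorbed into the normalization,
$$
\big\|(I+L/R^m)^{-N/m}\big\|_{L^r(B)\to L^p} \le C\,(R\rho)^{-n(1/r-1/p)}\quad\text{(schematically)},
$$
and symmetrically on the $L^q\to L^s$ side; here I am using the scaling $L\mapsto L/R^m$ which turns \eqref{e2.5} at scale $t=1/R$ into exactly this. Then I compose the three bounds: the outer two produce the factors $(R\rho)^{-n(1/r-1/p)}$ and $(R\rho)^{-n(1/q-1/s)}$, the middle one produces $V(x,\rho)^{1/q-1/p}(R\rho)^{n(1/p-1/q)}$ by ${\rm (BR^\alpha_{p,q,m})}$, and multiplying and re-expressing everything via $V(x,\rho)\simeq\rho^n$ gives $V(x,\rho)^{1/s-1/r}(R\rho)^{n(1/r-1/s)}$, which is ${\rm (BR^\alpha_{r,s,m})}$. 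One must also check $p_0<r\le p$ and $q\le s<p_0'$ so that the resolvent bounds \eqref{e2.5} apply on all the intermediate pairs --- this is where the hypothesis $p_0<r\le p<q\le s<p_0'$ is used.

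The last sentence of the statement --- that under \eqref{GE} one gets the full range $1\le r\le p<q\le s\le\infty$ --- follows because Gaussian estimates \eqref{GE} imply \eqref{GP} (equivalently ${\rm (G_{p_0,2,m})}$) for every $1\le p_0<2$, indeed imply the endpoint bounds $\|(I+t^mL)^{-N/m}\|_{L^1\to L^\infty}\lesssim V(\cdot,t)^{-1}$ directly, so the resolvent factors map $L^1\to L^p$ and $L^q\to L^\infty$ with the right constants; plugging $r=1$, $s=\infty$ into the scheme above is then legitimate. The main obstacle I anticipate is bookkeeping of the volume factors and the powers of $R\rho$ through the three-fold composition --- in particular making sure the localization $P_{B(x,\rho)}$ on the far right, together with the local-ball operator norms of the resolvent powers (which a priori see all of $X$, not just $B(x,\rho)$), still produces exactly the factor $(R\rho)^{-n(1/r-1/p)}$ and not something worse; the exponential off-diagonal decay in \eqref{e2.7}, summed over annuli $2^j\rho$, is what makes this work, and this summation (a standard but slightly tedious argument) is the technical heart of the proof.
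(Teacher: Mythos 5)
Your sandwich is a genuinely different decomposition from the paper's, and as written it has a gap.

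The paper first proves the output-side improvement ${\rm (BR^{\alpha}_{p,q,m})}\Rightarrow {\rm (BR^{\alpha}_{p,s,m})}$ for $q<s<p_0'$ by the one-sided factorization
\[
\big\|S_\lambda^\alpha(\SL)P_{B}\big\|_{p\to s}
=\big\|\varphi(\SL/\lambda)\,S_\lambda^\alpha(\SL)P_{B}\big\|_{p\to s}
\le \big\|\varphi(\SL/\lambda)\big\|_{q\to s}\cdot\big\|S_\lambda^\alpha(\SL)P_{B}\big\|_{p\to q},
\]
using $\varphi S_\lambda^\alpha=S_\lambda^\alpha$ with $\varphi\equiv 1$ on $[0,1]$. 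The point of this is that the cutoff sits on the \emph{output} side, after $S_\lambda^\alpha$, so the ball localization $P_B$ on the \emph{input} side is untouched and the hypothesis ${\rm (BR^{\alpha}_{p,q,m})}$ applies verbatim to the last factor, while $\|\varphi(\SL/\lambda)\|_{q\to s}$ is estimated globally via Propositions~\ref{prop2.1} and \ref{prop2.2}. No off-diagonal summation is needed for this half; the $r$-direction is then treated ``by a similar argument.''

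Your three-factor sandwich
\[
S_R^\alpha(\SL)P_{B}
= (I+L/R^m)^{-N/m}\cdot \widetilde F(\SL)\cdot (I+L/R^m)^{-N/m}P_{B}
\]
inserts a resolvent power \emph{between} the Bochner--Riesz symbol and $P_B$, so the middle factor in your product is $\widetilde F(\SL)$ without any localization. The hypothesis ${\rm (BR^{\alpha}_{p,q,m})}$ only gives you a bound on $S_R^\alpha(\SL)P_B$, and hence on $\widetilde F(\SL)P_B$ (as you remark); but $\widetilde F(\SL)P_B$ is simply not the operator that occurs in your factorization, since the rightmost resolvent power has already smeared the support of the input over all of $X$. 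To bound the middle factor you need either an unlocalized estimate $\|\widetilde F(\SL)\|_{p\to q}\le CR^{n(1/p-1/q)}$, which ${\rm (BR^{\alpha}_{p,q,m})}$ does not hand you for free, or an off-diagonal/annular decomposition of $(I+L/R^m)^{-N/m}P_B$ with the exponential decay of \eqref{e2.7}. You flag this as ``the technical heart,'' but that is precisely the step your proposal leaves undone, so the argument does not close. (Minor: you have also swapped the exponent thresholds for the two outer factors; the leftmost one maps $L^q\to L^s$, so it needs $N>n(1/q-1/s)$, and the rightmost maps $L^r\to L^p$, needing $N>n(1/r-1/p)$.)

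There is, however, a remark that would have rescued most of your scheme: under the standing assumption $C^{-1}\rho^n\le V(x,\rho)\le C\rho^n$ of Lemma~\ref{le3.2}, the right-hand side of ${\rm (BR^{\alpha}_{p,q,m})}$ is comparable to $R^{n(1/p-1/q)}$ uniformly in $\rho$, so letting $\rho\to\infty$ shows that the localized hypothesis is in fact equivalent to the unlocalized bound $\|S_R^\alpha(\SL)\|_{p\to q}\le CR^{n(1/p-1/q)}$. Had you made that observation explicit, your sandwich could be run with the unlocalized bound on the middle factor (and then you may as well drop $P_B$ altogether), and the powers of $R$ bookkeep exactly as you want. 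Without it, the proposal has a genuine gap.
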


\begin{proof} We first show that ${\rm (BR^{ \alpha}_{p, q, m})} $ implies $ {\rm (BR^{ \alpha}_{p, s, m})}$  for
  $1\leq p_0\leq p< q< s< p'_0$.
   We choose a function $\varphi\in C_0^{\infty}(-2, 2)$
 such that $\varphi(s)=1$
if $s<1$; $0$ if $s>2$. Let $N>n(1/q-1/s).$
Note that  $V(x, \rho)\geq C^{-1}\rho^n$ for all $x\in X$ and $\rho>0$.
For $p_0<q<s<p'_0,$ it follows by Proposition  \ref{prop2.1}
that
\begin{eqnarray}\label{e3.8}
\left\| \left(1+ {L\over \lambda^m}\right)^{-N/m} \right\|_{q\to s}
&\leq& C \lambda^{{n}({1\over q}-{1\over s})}\left\|V(x, \lambda^{-1})^{{1\over q}-{1\over s}}
\left(1+{  L\over \lambda^m}\right)^{-N/m} \right\|_{q\to s}\nonumber\\
&\leq&  C \lambda^{{n}({1\over q}-{1\over s})}.
\end{eqnarray}
Hence by Proposition  \ref{prop2.2}
\begin{eqnarray}\label{e3.9}
\left\|\varphi\left( {\sqrt[m]L\over\lambda}\right)\right\|_{q\to s}&=&
\left\|\varphi\left( {\sqrt[m]L\over\lambda}\right)\left(1+{  L\over \lambda^m}\right)^{N/m}\right\|_{q\to q}
\left\| \left(1+ {L\over \lambda^m}\right)^{-N/m} \right\|_{q\to s}\nonumber\\
&\leq&   C \lambda^{{n}({1\over q}-{1\over s})}.
\end{eqnarray}
Note that  $ (1-s/\lambda)_+^{\alpha}
 = \varphi(s/\lambda)(1-s/\lambda)_+^{\alpha}$ and $V(x, \rho)\leq C\rho^n$ for all $x\in X$ and $\rho>0$.
It follows that
%
\begin{eqnarray}\label{e3.10}
 \|S_\lambda^{\alpha}( \sqrt[m]L)P_{B(x,\rho)}\|_{p\to s}&=&
 \left\| \varphi\left( {\sqrt[m]L\over\lambda}\right)
S_\lambda^{\alpha}( \sqrt[m]L) P_{B(x,\rho)}\right\|_{p\to s}\nonumber\\
&\leq&\left\|\varphi\left( {\sqrt[m]L\over\lambda}\right)\right\|_{q\to s}
 \|S_\lambda^{\alpha}( \sqrt[m]L)P_{B(x,\rho)}\|_{p\to q}
\nonumber\\
&\leq& C\lambda^{{n}({1\over q}-
 {1\over s})} V(x,\rho)^{{1\over q}-{1\over p}}(\lambda \rho)^{n({1\over p}-
 {1\over q}) }
 \nonumber\\
&\leq& C V(x,\rho)^{{1\over s}-{1\over p}} (\lambda \rho)^{{n }({1\over p}- {1\over s}) }
\end{eqnarray}
since   $V(x, \rho)\leq C\rho^n$ for all $x\in X$ and $\rho>0.$  Hence,
${\rm (BR^{ \alpha}_{p, q, m})} \Rightarrow  {\rm (BR^{ \alpha}_{p, s, m})}$  for
  $1\leq p_0\leq p< q< s< p'_0.$
A similar argument as above shows that ${\rm (BR^{ \alpha}_{p, q, m})} $ implies $ {\rm (BR^{ \alpha}_{r, q, m})}$  for
  $1\leq p_0<r\leq  p< q<p'_0.$

 As we notice before,   condition ${\rm (G_{1, 2, m})}$ follows from   \eqref{GE},
 so the second part of the Lemma ~\ref{le3.2}
 follows from the first part.
This ends the proof of  Lemma~\ref{le3.2}.
\end{proof}

Our next result is a version of Lemma~\ref{le3.2} corresponding to the case $   \alpha=-1 $.
In this situation the proof simplifies and we can omit the Gaussian bounds assumptions from the statement.

\begin{lemma}\label{lemma 3.6} Let a nonnegative self-adjoint operator $H$
satisfy the $(p_0,p'_0)$-restriction estimate for some $1< p_0<2$ such that
 \begin{eqnarray}\label{eq3-05}
 \|dE_{H}(\lambda)\|_{p_0\to p'_0} \leq C\lambda^{\frac{n}{m}(1/p_0- 1/p'_0 )-1}.
\end{eqnarray}
 In addition, if there exists some $k>0$ such that
 \begin{eqnarray}\label{eq3-05a}
 \|(1+tH)^{-k}\|_{p\to p_0}\leq C_k t^{-{n\over m}({1\over p}-{1\over p_0})},  \ t>0,
  \end{eqnarray}
  for  some $1\leq p\le p_0<2$.
 Then  the estimate
 \begin{eqnarray}\label{eq3-05b}
 \|dE_{H}(\lambda)\|_{q\to q'} \leq C\lambda^{{n\over m}({1\over q}- {1\over q'})-1}.
 \end{eqnarray}
holds for all $p\le q\le p_0$.
\end{lemma}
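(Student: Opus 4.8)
The plan is to exploit that the spectral density $dE_H(\lambda)$ is spectrally concentrated at the single point $\lambda$, so that sandwiching it between two copies of a suitably normalised resolvent costs nothing but lets us trade the given $(p_0,p_0')$ restriction bound \eqref{eq3-05} for the desired $(q,q')$ bound through the resolvent smoothing \eqref{eq3-05a}. Fix $\lambda>0$ and take $t=\lambda^{-1}$ in \eqref{eq3-05a}. Since $dE_H(\lambda)$ lives at the spectral value $\lambda$, for every bounded Borel function $\psi$ on $[0,\infty)$ one has $\psi(H)\,dE_H(\lambda)=\psi(\lambda)\,dE_H(\lambda)=dE_H(\lambda)\,\psi(H)$; applying this to $\psi(\mu)=(1+\mu/\lambda)^{-k}$, for which $\psi(\lambda)=2^{-k}$, gives the sandwich identity
\begin{equation*}
dE_H(\lambda)=2^{2k}\,(1+\lambda^{-1}H)^{-k}\,dE_H(\lambda)\,(1+\lambda^{-1}H)^{-k}.
\end{equation*}

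First I would prove \eqref{eq3-05b} at the endpoint $q=p$. Estimating the right-hand side of the sandwich identity through the intermediate spaces $L^{p_0}$ and $L^{p_0'}$,
\begin{equation*}
\|dE_H(\lambda)\|_{p\to p'}\le 2^{2k}\,\|(1+\lambda^{-1}H)^{-k}\|_{p_0'\to p'}\,\|dE_H(\lambda)\|_{p_0\to p_0'}\,\|(1+\lambda^{-1}H)^{-k}\|_{p\to p_0}.
\end{equation*}
The middle factor is at most $C\lambda^{\frac nm(\frac1{p_0}-\frac1{p_0'})-1}$ by \eqref{eq3-05}; the last factor is \eqref{eq3-05a} with $t=\lambda^{-1}$, hence at most $C\lambda^{\frac nm(\frac1p-\frac1{p_0})}$; and since $(1+\lambda^{-1}H)^{-k}$ is self-adjoint on $L^2(X)$, the first factor equals $\|(1+\lambda^{-1}H)^{-k}\|_{p\to p_0}$ by duality and obeys the same bound. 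Multiplying the three, the exponents add to $\frac nm\big(\frac2p-\frac1{p_0}-\frac1{p_0'}\big)-1=\frac nm\big(\frac1p-\frac1{p'}\big)-1$, which is \eqref{eq3-05b} for $q=p$.

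For general $p\le q\le p_0$ I would then interpolate. With $\lambda$ fixed, $dE_H(\lambda)$ is one fixed linear operator mapping $L^p\to L^{p'}$ (just shown) and $L^{p_0}\to L^{p_0'}$ (hypothesis \eqref{eq3-05}); writing $\frac1q=(1-\theta)\frac1{p_0}+\theta\frac1p$ with $\theta\in[0,1]$, the fact that $\frac1{q'}=1-\frac1q$ is an affine function of $\frac1q$ shows that $(\frac1q,\frac1{q'})$ is the $\theta$-interpolant of $(\frac1p,\frac1{p'})$ and $(\frac1{p_0},\frac1{p_0'})$. Riesz--Thorin interpolation then yields $\|dE_H(\lambda)\|_{q\to q'}\le C\lambda^{E}$ with $E=\theta\big(\frac nm(\frac1p-\frac1{p'})-1\big)+(1-\theta)\big(\frac nm(\frac1{p_0}-\frac1{p_0'})-1\big)$, and since $\frac1q-\frac1{q'}=\frac2q-1$ likewise interpolates linearly, $E=\frac nm(\frac1q-\frac1{q'})-1$, which is \eqref{eq3-05b}.

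The one delicate point --- and the part I would write out with care --- is the spectral localisation identity $\psi(H)\,dE_H(\lambda)=\psi(\lambda)\,dE_H(\lambda)$, since here $dE_H(\lambda)$ is not a genuine spectral projection but the boundary density furnished by Stone's formula, $dE_H(\lambda)=\frac1{2\pi i}\big(R_H(\lambda+i0)-R_H(\lambda-i0)\big)$, and is \emph{a priori} only bounded between weighted $L^2$-spaces (and, by \eqref{eq3-05}, from $L^{p_0}$ to $L^{p_0'}$). To justify it I would note that $(1+\lambda^{-1}H)^{-k}$, a bounded function of $H$, commutes with each $R_H(\lambda\pm i0)$, and that $(1+\mu/\lambda)^{-k}-2^{-k}$ vanishes at $\mu=\lambda$, so that $g(\mu):=\big((1+\mu/\lambda)^{-k}-2^{-k}\big)(\mu-\lambda)^{-1}$ extends to a bounded Borel function on $[0,\infty)$; hence $(1+\lambda^{-1}H)^{-k}R_H(\lambda\pm i0)=2^{-k}R_H(\lambda\pm i0)+g(H)$ with the correction $g(H)$ independent of the side $\pm$, so it cancels in the difference defining $dE_H(\lambda)$. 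The compatibility of the domains of the operators composed above --- for instance $(1+\lambda^{-1}H)^{-k}\colon L^{p_0'}\to L^{p'}$, which follows by duality from \eqref{eq3-05a} --- should be recorded en route but is routine.
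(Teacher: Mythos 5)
Your proof is correct and follows essentially the same route as the paper: the same sandwich identity $dE_H(\lambda)=2^{2k}(1+H/\lambda)^{-k}dE_H(\lambda)(1+H/\lambda)^{-k}$, the same three-factor estimate through $L^{p_0}\to L^{p_0'}$ with duality handling the $L^{p_0'}\to L^{p'}$ factor, and the same reduction to $q=p$ by Riesz--Thorin. The closing paragraph, justifying the sandwich identity via Stone's formula when $dE_H(\lambda)$ is only a boundary density rather than a genuine spectral projection, is a careful extra step that the paper takes as implicit.
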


\begin{proof} By interpolation, it suffices to prove the endpoint case $q=p$.
 In fact, we observe that $(1+H/\lambda)^{-2k}dE_{H}(\lambda)=2^{-2k}dE_{H}(\lambda).$
Then by duality it follows that
\begin{eqnarray*}
\|2^{-2k}dE_{H}(\lambda) \|_{p\to p'}&=&
\|(1+H/\lambda)^{-k}dE_{H}(\lambda)(1+H/\lambda)^{-k} \|_{p\to p'}\nonumber\\
&\leq& \|(1+H/\lambda)^{-k}  \|_{p\to p_0}
\|dE_{H}(\lambda) \|_{p_0\to p'_0}
\|(1+H/\lambda)^{-k} \|_{p'_0\to p'}\nonumber\\
&\leq& C\lambda^{\frac{n}{m}({1\over p}-{1\over p_0})} \lambda^{\frac{n}{m}({1\over p_0}-
 {1\over p'_0})-1}\lambda^{\frac{n}{m}({1\over p'_0}-{1\over p'})}
 \nonumber\\
&\leq& C\lambda^{\frac{n}{m}({1\over p}- {1\over p'})-1}.
\end{eqnarray*}
This ends the proof.
\end{proof}

\begin{remark} The resolvent power $(1+tH)^{-k}$ in  condition \eqref{eq3-05a}  can be
replaced by the semigroup $e^{-tH}$, which actually are equivalent  by some standard arguments.
\end{remark}

Next we describe a useful notation of one-dimensional homogeneous
 distributions $\chi_-^a$ and $\chi_+^a$ coming from \cite{Ho1} and defined by
\begin{eqnarray}\label{e3.11}
\chi_\pm^{\alpha}=\frac{x_\pm^a}{\Gamma({\alpha}+1)},\ \ \ \   {\rm Re}\, {\alpha}>-1,
\end{eqnarray}
where $\Gamma$ is the Gamma function and
$$
x_+^{\alpha}=x^{\alpha} \quad \mbox{if} \quad x \ge 0 \quad \quad \mbox{and}
\quad  x_+^{\alpha}=0 \quad  \mbox{if} \quad x < 0;
$$
$$
x_-^{\alpha}=|x|^{\alpha} \quad \mbox{if} \quad x \le 0 \quad \quad \mbox{and}
\quad  x_-^{\alpha}=0 \quad  \mbox{if} \quad x > 0.
$$
It easy to note that $x_\pm^{\alpha}$ are well defined  distributions for $\Re {\alpha} > -1$. From a
straightforward observation $\frac{d}{dx} x_\pm^{\alpha} = \pm{\alpha} x_\pm^{{\alpha}-1},$ it follows that
\begin{equation*}
 \frac{d}{dx} \chi_\pm^{\alpha}=\pm\chi_\pm^{{\alpha}-1}
\end{equation*}
 for all $\Re {\alpha} > 0$.
One can  use the above relation to extend the family of functions $ \chi_-^{\alpha}$ to
 a family of distributions on $\R$ defined for all ${\alpha}\in \C$,
 see \cite[Ch III, Section 3.2]{Ho1} for details.  Since $1-\chi_-^0(x) $ is the Heaviside function, it follows that
\begin{equation*}
\chi_\pm^{-k} =(\pm 1)^k\delta_0^{(k-1)}, \quad  k=1,2,\ldots,
\label{Heaviside}
\end{equation*}
where $\delta_0$ is  the $\delta$-Dirac measure.

A straightforward computation shows that for all $w,z\in \C$
\begin{equation}\label{e3.12}
\chi_-^w * \chi_-^z=\chi_-^{w+z+1},
\end{equation}
where $\chi_-^w* \chi_-^z$ is the convolution of the distributions $\chi_-^w$ and $\chi_-^z$,
see \cite[(3.4.10)]{Ho1}.
If $\supp F \subset [0,\infty)$,  we then define the Weyl fractional
derivative of $F$ of order $\nu$ by the formula
\begin{equation}\label{e3.13}
F^{(\nu)}=F*{\chi}^{-\nu-1}_-, \ \ \ \nu\in{\mathbb C}
\end{equation}
and we note that for every $\nu\in{\mathbb C},$
$$
F^{(\nu)}*{\chi}^{\nu-1}_-=F*{\chi}^{-\nu-1}_-*{\chi}^{\nu-1}_-=F,
$$
see \cite[Page 308]{GP} or  \cite[(6.5)]{DOS}.
It follows from the above equality and Fubini's Theorem that  for every $\nu\geq 0,$
\begin{eqnarray}\label{e3.14}
F(L)&=&\frac{1}{\Gamma(\nu)}\int_0^\infty F^{(\nu)}(s)
(s-{L})_+^{\nu-1}ds
\end{eqnarray}
for all   $F$  supported in $[0, \infty)$. Relation \eqref{e3.14} plays an important role in the proof of
Proposition \ref{prop3.3} below.

At this point it is convenient to introduce slightly modified version of the standard Sobolev spaces.
Namely, if $\supp F \subset [0,\infty)$, then for any $1 \le p \le \infty$ and $\nu \in \R$
we define the Weyl-Sobolev norm of $F$ by the formula
$$
\big\| F \big\|_{WS^{\nu, p}}=\|F\|_p+ \|F^{(\nu)}\|_p.
$$

\begin{remark}
Note that for $1<p< \infty $ the Weyl-Sobolev norm is equivalent to the standard Sobolev
norm, that is
$$
c\| F \|_{W^{\nu, p}} \le \| F \|_{WS^{\nu, p}}\le C \| F \|_{W^{\nu, p}}
$$
whereas for $p=1$
$$
\| F \|_{WS^{\nu, 1}} \le   C_\varepsilon \| F \|_{W^{\nu+\varepsilon, 1}}
$$
for any $\varepsilon >0$.
\end{remark}

\begin{proof}
Note that $
(1-d^2/dx^2)^{-(\alpha+1)/2}I_\alpha$ is an example of classical (one dimensional)  H\"ormander type
Fourier multiplier and is bounded on all $L^p({\mathbb R})$ spaces for $1<p<\infty$.
Next set
$$
I_\alpha f=\chi_-^{-\alpha-2}\ast f
$$
and for fixed $\varepsilon>0$ consider the operator $
(1-d^2/dx^2)^{-(\alpha+1+\varepsilon)/2}I_\alpha$. An argument
as in \cite[Example 7.1.17, p. 167 and (3.2.9) p.72]{Ho1} shows that
$(1-d^2/dx^2)^{-(\alpha+1+\varepsilon)/2}I_\alpha f =f\ast \eta$ where ${\widehat \eta}$ is the
locally integrable function
$$
{\widehat \eta(\xi)}={-ie^{i\pi\alpha/2}\xi_+^{-(\alpha+1)} +ie^{-i\pi\alpha/2}\xi_-^{-(\alpha+1)}
\over (1+\xi^2)^{(\alpha+1+\varepsilon)/2} }.
$$
Here $\xi_+=\max(0, \xi)$  and $\xi_-=-\max(0, \xi)$.
A standard argument shows that $\eta \in L^1({\mathbb R})$.
Hence
\begin{eqnarray*}
I_{12}&\leq& C\left\| \delta_RF\ast \chi_-^{-\alpha-2} \right\|_1\le
C\left\| (1-d^2/dx^2)^{-(\alpha+1+\varepsilon)/2} \delta_RF \right\|_1
= C   \|   \delta_RF\|_{W^{ \alpha+1+\varepsilon, 1}({\mathbb R})}.
\end{eqnarray*}
 This finishes the proof.
\end{proof}

In our next results we will explain how to estimate the $L^p \to L^q$ norm of general multiplier $F(L)$ in terms of
estimate ${\rm (BR^{ \alpha}_{p, q, m})}$.

\begin{proposition}\label{prop3.3}
Suppose that $(X, d, \mu)$ satisfies the doubling property \eqref{e2.1}
and
  a non-negative self-adjoint operator $L$ acting on $L^2(X)$ satisfying condition ${\rm (BR^{ \alpha}_{p, q, m})}$
 for
 $\alpha\geq -1$ and
$1\leq p < q\leq \infty$.  Then for every $\varepsilon>0$ there exists a constant $C_\varepsilon$ such that
 for any $R>0$ and all Borel functions $F$ for which $\supp F \subset [R/2, R]$
\begin{eqnarray*}
\big\|F(\SL)P_{B(x, \rho)} \big\|_{p \to q} \leq
 CV(x,\rho)^{{1\over q}-{1\over p}}
(R\rho)^{{n}({1\over p}- {1\over q}) }   \big\| \delta_RF \big\|_{WS^{\alpha+1, 1}({\mathbb R})}
 \end{eqnarray*}
  for all
  $x\in X$ and all $\rho\geq 1/R$.
\end{proposition}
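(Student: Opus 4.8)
The plan is to realise $F$ as a continuous superposition of (rescaled) Bochner--Riesz kernels $S^{\alpha}_{s}(\sqrt[m]{L})$ and then to apply the assumed estimate ${\rm (BR^{\alpha}_{p,q,m})}$ termwise. By a standard approximation we may assume $F\in C_c^{\infty}((R/2,R))$. Consider first $\alpha>-1$. Applying the reproducing formula \eqref{e3.14} to the operator $\sqrt[m]{L}$ with $\nu=\alpha+1>0$, and using the identity $\frac{1}{\Gamma(\alpha+1)}(s-\sqrt[m]{L})_+^{\alpha}=s^{\alpha}S^{\alpha}_{s}(\sqrt[m]{L})$, we obtain
\[
F(\sqrt[m]{L})=\int_0^{\infty}F^{(\alpha+1)}(s)\,s^{\alpha}\,S^{\alpha}_{s}(\sqrt[m]{L})\,ds .
\]
Since $\supp F\subseteq[R/2,R]$ and $\chi_-^{-\alpha-2}$ is supported in $(-\infty,0]$, the Weyl derivative $F^{(\alpha+1)}=F*\chi_-^{-\alpha-2}$ is supported in $(-\infty,R]$, while $(s-\sqrt[m]{L})_+^{\alpha}=0$ for $s\le 0$; hence the $s$-integral runs effectively over $(0,R]$. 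Composing with $P_{B(x,\rho)}$ and using the triangle inequality for operator-valued integrals,
\[
\big\|F(\sqrt[m]{L})P_{B(x,\rho)}\big\|_{p\to q}\le\int_0^{R}\big|F^{(\alpha+1)}(s)\big|\,s^{\alpha}\,\big\|S^{\alpha}_{s}(\sqrt[m]{L})P_{B(x,\rho)}\big\|_{p\to q}\,ds .
\]

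Because $\rho\ge 1/R$ we may split $\int_0^{R}=\int_0^{1/\rho}+\int_{1/\rho}^{R}$. On $[1/\rho,R]$ one has $\rho\ge 1/s$, so ${\rm (BR^{\alpha}_{p,q,m})}$ bounds the inner norm by $CV(x,\rho)^{1/q-1/p}(s\rho)^{n(1/p-1/q)}$. On $(0,1/\rho]$ one has $B(x,\rho)\subseteq B(x,1/s)$, hence $P_{B(x,\rho)}=P_{B(x,1/s)}P_{B(x,\rho)}$, and ${\rm (BR^{\alpha}_{p,q,m})}$ applied at radius $1/s$ bounds the inner norm by $CV(x,1/s)^{1/q-1/p}\le CV(x,\rho)^{1/q-1/p}$, the last inequality because $\rho\mapsto V(x,\rho)$ is nondecreasing and $1/q-1/p<0$. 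Inserting these two estimates, using $s^{n(1/p-1/q)}\le R^{n(1/p-1/q)}$ on $(0,R]$ and $R\rho\ge 1$, the right-hand side is dominated by $C\,V(x,\rho)^{1/q-1/p}(R\rho)^{n(1/p-1/q)}\int_0^{R}|F^{(\alpha+1)}(s)|\,s^{\alpha}\,ds$, so it remains to prove $\int_0^{R}|F^{(\alpha+1)}(s)|\,s^{\alpha}\,ds\le C\|\delta_RF\|_{WS^{\alpha+1,1}}$.

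By the homogeneity $F^{(\alpha+1)}(Ru)=R^{-\alpha-1}(\delta_RF)^{(\alpha+1)}(u)$ and the substitution $s=Ru$, this reduces to showing $\int_0^1|G^{(\alpha+1)}(u)|\,u^{\alpha}\,du\le C(\|G\|_1+\|G^{(\alpha+1)}\|_1)$ for $G:=\delta_RF$, which is supported in $[1/2,1]$. If $\alpha\ge 0$ this is immediate since $u^{\alpha}\le 1$ on $[0,1]$. If $-1<\alpha<0$, split at $u=1/4$: on $[1/4,1]$ one has $u^{\alpha}\le 4^{|\alpha|}$, giving the contribution $C\|G^{(\alpha+1)}\|_1$; on $(0,1/4]$ the point $u$ lies at distance $\ge 1/4$ from $\supp G=[1/2,1]$, so in $G^{(\alpha+1)}(u)=(G*\chi_-^{-\alpha-2})(u)$ the distribution $\chi_-^{-\alpha-2}$ is only tested away from its singularity at the origin, where it coincides with the function $v\mapsto|v|^{-\alpha-2}/\Gamma(-\alpha-1)$, bounded on the relevant interval; this yields $|G^{(\alpha+1)}(u)|\le C\|G\|_1$ uniformly on $(0,1/4]$, and since $\int_0^{1/4}u^{\alpha}\,du<\infty$ the contribution is $C\|G\|_1$. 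This completes the case $\alpha>-1$. The case $\alpha=-1$ is simpler: by Stone's formula $F(\sqrt[m]{L})=\int_{R/2}^{R}F(s)\,dE_{\sqrt[m]{L}}(s)$, and since $S^{-1}_{s}(\sqrt[m]{L})$ is by definition a fixed power of $s$ times $dE_{\sqrt[m]{L}}(s)$, the triangle inequality together with ${\rm (BR^{-1}_{p,q,m})}$ applied for each $s\in[R/2,R]$ (replacing $B(x,\rho)$ by $B(x,\max(\rho,1/s))\subseteq B(x,2\rho)$ and paying one doubling constant when $\rho<1/s$) reduces everything to $\int_{R/2}^{R}|F(s)|\,ds\sim R\|\delta_RF\|_1$, which is controlled by the $\|\delta_RF\|_1$ part of $\|\delta_RF\|_{WS^{0,1}}$.

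I expect the genuinely delicate step to be the weighted $L^1$ bound $\int_0^{R}|F^{(\alpha+1)}(s)|\,s^{\alpha}\,ds\lesssim\|\delta_RF\|_{WS^{\alpha+1,1}}$ in the range $-1<\alpha<0$: the singular weight $s^{\alpha}$ near the origin is not absorbed by the Sobolev norm by itself, and one must use that $\supp F$ stays away from $0$ together with the explicit form of the Weyl fractional integration kernel there. The other ingredients --- the reproducing formula, the operator-valued triangle inequality, the splitting at $s=1/\rho$, and the enlargement-of-ball device for small $s$ --- are routine.
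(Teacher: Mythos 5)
Your argument coincides with the paper's: both expand $F(\sqrt[m]L)$ via the Weyl reproducing formula \eqref{e3.14} into an $s$-integral of the rescaled Bochner--Riesz operators $S^{\alpha}_{s}(\sqrt[m]L)$ weighted by $F^{(\alpha+1)}(s)$, apply ${\rm (BR^{\alpha}_{p,q,m})}$ under the integral, and control the remaining weighted $L^1$-integral of the Weyl fractional derivative by noting that $\supp\delta_RF\subseteq[1/2,1]$ keeps the convolution $\delta_RF*\chi_-^{-\alpha-2}$ away from the singularity of $\chi_-^{-\alpha-2}$ near the troublesome small values of $s$. Your handling of the regime $s<1/\rho$ by replacing $B(x,\rho)$ with $B(x,1/s)$ so that ${\rm (BR^{\alpha}_{p,q,m})}$ applies is in fact a touch more scrupulous than the paper's presentation, which applies the ${\rm (BR^{\alpha}_{p,q,m})}$ bound on the whole range $\lambda\in(0,2)$ without comment.
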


\begin{proof} We  use  the formula \eqref{e3.14} to obtain  that
  for $\alpha\geq -1,$
\begin{eqnarray*}
F(\sqrt[m] {L})
&=& \frac{1}{\Gamma(\alpha+1)}\int_0^\infty
F \ast \chi_-^{-\alpha-2} (\lambda) (\lambda-\sqrt[m]{L})_+^{\alpha} d\lambda.
\end{eqnarray*}
Since supp $F\subseteq [R/2, R]$, one can rewrite
\begin{eqnarray*}
F(\sqrt[m] {L})
   &=&
   \int_0^\infty  \lambda^{\alpha} S_{ \lambda R }^{\alpha}(\sqrt[m]{L})
   \big( \delta_RF \ast \chi_-^{-\alpha-2}\big)(\lambda)  d\lambda\nonumber\\
   &=&  \left( \int_0^2 +\int_2^{\infty} \right)
   \lambda^{\alpha} S_{ \lambda R }^{\alpha}(\sqrt[m]{L})
   \big( \delta_RF \ast \chi_-^{-\alpha-2}\big)(\lambda)  d\lambda\nonumber\\
   &=& I_1+I_2.
\end{eqnarray*}
We observe that if $\lambda\in (2, \infty)$
and $\tau\in (1/2, 1)$, then $  \chi_-^{-\alpha-2} (\lambda-\tau)=0,$
and so $   \big( \delta_RF \ast \chi_-^{-\alpha-2}\big)(\lambda)=0.$
Hence $I_2=0.$ To estimate the term $I_1$ we use
  condition ${\rm (BR^{\alpha}_{p, q, m})}$ to obtain
\begin{eqnarray*}
 \|
 F(\sqrt[m] {L}) P_{B(x, \rho)}\|_{p \to q }
&\leq&      \int_0^2
   \lambda^{\alpha} \| S_{ \lambda R }^{\alpha}(\sqrt[m]{L}) P_{B(x, \rho)}\|_{p \to q }
   \big( \delta_RF \ast \chi_-^{-\alpha-2}\big)(\lambda)  d\lambda\nonumber\\
   &\leq&   C V(x,\rho)^{{1\over q}-{1\over p}}
(R\rho)^{{n}({1\over p}- {1\over q}) }
 \int_{0}^{2} \lambda^{{n }({1\over p }-{1\over q}) +\alpha}
\big|  \delta_RF \ast \chi_-^{-\alpha-2} (\lambda)  \big| d\lambda\nonumber\\
&=&   C
V(x,\rho)^{{1\over q}-{1\over p}}
(R\rho)^{{n}({1\over p}- {1\over q}) }\left(
\int_{0}^{1/4} +\int_{1/4}^{2}   \right) \lambda^{{n }({1\over p }-{1\over q}) +\alpha}
\big|  \delta_RF   \ast \chi_-^{-\alpha-2} (\lambda)  \big| d\lambda\nonumber\\
&=&   CV(x,\rho)^{{1\over q}-{1\over p}}
(R\rho)^{{n}({1\over p}- {1\over q}) }\left(
 I_{11} +I_{12}\right).
\end{eqnarray*}
For the term $I_{11}$ we use the fact that $\supp \delta_R F\subseteq [1/2, 1]$
to obtain that if $\lambda\in (0, 1/4)$ and $\tau\in (1/2, 2)$, then $ |\chi_-^{-\alpha-2} (\lambda-\tau)|\leq C.$
This shows
\begin{eqnarray*}
I_{11}&=&  C
\int_{0}^{1/4}  \lambda^{{n }({1\over p }-{1\over q}) +\alpha}
\big|  \delta_RF  \ast \chi_-^{-\alpha-2} (\lambda)  \big| d\lambda\nonumber\\
 &\leq&C
\int_{0}^{1/4}\int_{\R} \lambda^{{n }({1\over p }-{1\over q}) +\alpha}
\big|  \chi_-^{-\alpha-2} (\lambda-\tau) \delta_RF ( \tau) \big| d\tau d\lambda\nonumber\\
 &\leq&C\| \delta_RF \|_1
\int_{0}^{1/4}  \lambda^{{n }({1\over p }-{1\over q}) +\alpha} d\lambda\nonumber\\
 &\leq&C\| \delta_RF \|_1,
\end{eqnarray*}
where we used the fact that  $n({1/p }-{1/q})  +\alpha>-1.$
Now we  estimate the term $I_{12}$, and note that
\begin{eqnarray*}
I_{12}
 &\leq&C
\int_{1/4}^{2} \lambda^{{n }({1\over p }-{1\over q}) +\alpha}
\big| \delta_RF\ast \chi_-^{-\alpha-2} (\lambda)  \big| d\lambda\nonumber\\
 &\leq&C\int_{1/32}^{8} |\delta_RF \ast \chi_-^{-\alpha-2}(\lambda)|d\lambda \le
 \| \delta_RF \|_{WS^{\alpha+1, 1}}
\end{eqnarray*}
This ends the proof.
\end{proof}

\subsection{Bochner-Riesz means imply spectral multiplier estimates}
In this section we will show that Bochner-Riesz means can be used to study spectral multipliers corresponding
to functions supported in dyadic intervals.  We assume that
   $(X, d, \mu)$ is  a  metric measure  space satisfying the doubling property
 and   $n$ is the  doubling dimension from  condition \eqref{e2.2}.

\begin{theorem}\label{th3.5}
Suppose that  there exists a constant $C>0$ such that  $V(x, \rho)\geq C\rho^n$ for all $x\in X$ and $\rho>0$.
Let $L$ be
  a non-negative self-adjoint operator $L$ acting on $L^2(X)$ satisfying Davies-Gaffney  estimates \eqref{DG}
  and condition ${\rm (G_{p_0, 2, m})}$ for some $1\leq p_0<2$. Next assume  that
  condition  ${\rm (BR^{ \alpha}_{p, q, m})}$ holds for $\alpha\geq -1$ and
$p_0< p< q< p'_0$.
Let  $p\leq r  \leq s\leq q$,
and $\beta> n(1/p-1/r) +n({1/s}-{1/q}) +\alpha+1$.
Then for  a   Borel  function $F$ such that  $\supp F\subseteq [1/4, 4]$ and
 $F \in  W^{\beta, 1}(\R)$,
 the operator
$F(t\sqrt[m]{L})$ is bounded from  $L^r(X)$ to $L^s(X)$. In addition,
\begin{equation}
\label{e3.18}    \sup_{t>0}t^{n({1\over r}-{1\over s})}\|F(t\sqrt[m]{L})\|_{r\to s} \leq
C
\|F\|_{W^{\beta, 1}(\R)}.
\end{equation}
\end{theorem}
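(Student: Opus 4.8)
The plan is to reduce the theorem to a dyadic decomposition and apply Proposition~\ref{prop3.3} on each piece. First I would write $F$ supported in $[1/4,4]$ as a sum $F = \sum_{k} F_k$ where each $F_k$ is supported in a dyadic interval $[2^{k}/4, 2^{k}]$-type window with $k$ ranging over a \emph{bounded} set of integers (since $\supp F \subseteq [1/4,4]$, only finitely many $k$ occur), using a smooth partition of unity $\{\phi_k\}$ adapted to these scales. For the operator $F(t\sqrt[m]L)$, after rescaling $R \mapsto 1/t$ the function $\delta_t F$ lives on $[1/4,4]$ as well, so it suffices to bound $\|F(\sqrt[m]L)\|_{r\to s}$ uniformly and then rescale; the factor $t^{n(1/r-1/s)}$ in \eqref{e3.18} is exactly the homogeneity coming from the $V(x,\rho)\geq C\rho^n$ lower bound combined with the $(R\rho)^{n(1/p-1/q)}$ factor in ${\rm (BR^{\alpha}_{p,q,m})}$.

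The key steps, in order, are as follows. (1) Use Lemma~\ref{le3.2}: the hypothesis ${\rm (BR^{\alpha}_{p,q,m})}$ with $p_0<p<q<p_0'$ upgrades to ${\rm (BR^{\alpha}_{r,s,m})}$ for all $p_0<r\leq p<q\leq s<p_0'$. Since we want $p\leq r\leq s\leq q$, not the reverse inclusion, I would instead keep $p,q$ as the ``outer'' exponents and interpolate: by the generalized Gaussian estimates and Proposition~\ref{prop2.1}, the resolvent powers $(I+t^m L)^{-N/m}$ map $L^r\to L^s$ with the right homogeneity for $N$ large, and one composes $F(t\sqrt[m]L) = [F(t\sqrt[m]L)(I+t^mL)^{N/m}\varphi(t\sqrt[m]L)]\cdot[(I+t^mL)^{-N/m}]$, putting the first (bounded, by Proposition~\ref{prop3.3} and the dyadic support) factor on $L^p\to L^q$ or intermediate spaces and absorbing the dimensional gain into the resolvent factor. (2) Apply Proposition~\ref{prop3.3} to each dyadic piece $F_k$ to get $\|F_k(\sqrt[m]L)P_{B(x,\rho)}\|_{p\to q} \lesssim V(x,\rho)^{1/q-1/p}(R\rho)^{n(1/p-1/q)}\|\delta_R F_k\|_{WS^{\alpha+1,1}}$. (3) Convert the local estimates $\|F_k(\sqrt[m]L)P_{B(x,\rho)}\|$ into a global $L^p\to L^q$ bound: since $\rho$ can be taken comparable to $1/R$ and, after rescaling, $R\sim 1$, the balls are of bounded radius; one covers $X$ by such balls with bounded overlap and sums, or more cleanly uses that for $F$ supported away from $0$ the operator $F(\sqrt[m]L)$ has an approximate finite-propagation/localization property inherited from \eqref{DG}, so the $P_{B(x,\rho)}$-localized bounds patch together. (4) Sum over the finitely many $k$ and bound $\sum_k \|\delta_R F_k\|_{WS^{\alpha+1,1}} \lesssim \|F\|_{W^{\beta,1}}$ using the Remark that $\|\cdot\|_{WS^{\alpha+1,1}} \leq C_\varepsilon\|\cdot\|_{W^{\alpha+1+\varepsilon,1}}$ together with the gain from the extra differentiability $\beta > n(1/p-1/r)+n(1/s-1/q)+\alpha+1$; the slack $\beta - (\alpha+1)$ beyond the Sobolev index is what pays for the interpolation in step (1) and the $\varepsilon$ loss.

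The main obstacle I expect is step (3): passing rigorously from the family of \emph{localized} bounds $\|F(\sqrt[m]L)P_{B(x,\rho)}\|_{p\to q}$ given by ${\rm (BR^{\alpha}_{p,q,m})}$ and Proposition~\ref{prop3.3} to a genuine global $L^r\to L^s$ operator norm. Unlike the $L^r\to L^r$ case (where a Cotlar–Stein or duality argument is standard), here $p\neq q$, so one cannot symmetrize; the right tool is to exploit that $\supp F \subseteq [1/4,4]$ forces, via the Davies-Gaffney estimate \eqref{DG} and the standard Fourier-transform-of-$F$ argument, rapid off-diagonal decay of the kernel of $F(t\sqrt[m]L)$ at scale $t$, so that the operator decomposes into a bounded-overlap sum of pieces each controlled by the local bound; summing these with the $\ell^1$-type Schur test against the $V(x,\rho)^{1/q-1/p}$ weights, using $V(x,\rho)\gtrsim \rho^n$, yields the claim. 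Keeping track of the exact exponents $n(1/p-1/r)$ and $n(1/s-1/q)$ through this patching — they arise from comparing the target exponent pair $(r,s)$ against the ``restriction'' pair $(p,q)$ via Hölder on balls of volume $\sim \rho^n$ — is the bookkeeping-heavy part but is routine once the localization is set up.
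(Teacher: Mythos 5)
Your plan decomposes $F$ itself into dyadic pieces $F_k$ supported in intervals $[2^k/4,2^k]$, but since $\supp F\subseteq[1/4,4]$ this gives only boundedly many pieces and accomplishes nothing; there is no quantity that varies over $k$ to trade against the extra derivatives $\beta-(\alpha+1)$. This is the central misstep. The paper's decomposition is in the \emph{Fourier dual} variable of the symbol, not the physical variable: it sets $G(\lambda)=F(\sqrt[m]\lambda)e^{\lambda}$ and writes $G=\sum_{\ell\ge0}G^{(\ell)}$ with $\widehat{G^{(\ell)}}$ localized at frequency $\sim 2^{\ell}$, over \emph{infinitely} many $\ell$. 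The point is that $G^{(\ell)}(t^{m}L)$ then has off-diagonal decay that kicks in only at distance $\sim 2^{\ell}t$ (via the wave/heat representation $\int\phi(2^{-\ell}\tau)\widehat G(\tau)e^{(i\tau-1)t^mL}\,d\tau$ and Davies--Gaffney), so each piece is localized at a \emph{different} scale $\rho_\ell=2^{\ell(1+\varepsilon)}t$. When you run the H\"older-on-balls bookkeeping at scale $\rho_\ell$ to upgrade the local $L^p\to L^q$ bound from Proposition~\ref{prop3.3} to global $L^r\to L^s$, the volume factors contribute $\rho_\ell^{\,n(1/p-1/r)+n(1/s-1/q)}=(2^{\ell(1+\varepsilon)}t)^{n(1/p-1/r)+n(1/s-1/q)}$, and the Weyl--Sobolev cost of the $\ell$-th piece is $\|G^{(\ell)}\|_{WS^{\alpha+1,1}}\lesssim 2^{\ell(\alpha+1)}\|G^{(\ell)}\|_1$. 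Summing over $\ell$ is precisely an $\ell^1$-Besov sum and requires $\beta>n(1/p-1/r)+n(1/s-1/q)+\alpha+1$; this is the origin of the exponent in the theorem, which your scheme does not and cannot produce because there is no growing scale parameter.

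A second, related problem: you invoke ``rapid off-diagonal decay of the kernel of $F(t\sqrt[m]L)$ at scale $t$'' for a general $F\in W^{\beta,1}$ and then propose an $\ell^1$ Schur-type sum over a fixed-scale cover. But a $W^{\beta,1}$ symbol only gives polynomial off-diagonal decay of order $\beta$, not exponential decay, and the rate you can extract scales with the number of derivatives you spend; there is no free exponential tail. That is exactly why one must first isolate Fourier-localized pieces $G^{(\ell)}$ (whose off-diagonal tails \emph{are} $\exp$-small past $2^{\ell(1+\varepsilon)}t$) and pay $2^{\ell(\alpha+1)}$ per piece. Your step~(1), composing $F(t\sqrt[m]L)=[F(t\sqrt[m]L)(I+t^mL)^{N/m}\varphi]\cdot(I+t^mL)^{-N/m}$, is the mechanism the paper uses to handle the tail term $(1-\psi)F^{(\ell)}$ away from the spectral window, where the symbol is Schwartz and one can absorb arbitrary resolvent powers; but it does not, on its own, transport the localized $(p,q)$-estimate to a global $(r,s)$-estimate, because the factorization does not close through $L^p\to L^q$ for general $p\le r\le s\le q$. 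You also do not use Lemma~\ref{le3.2} in the form you state (and indeed remark this yourself), which further signals that the exponent relation $p\le r\le s\le q$ must enter through the volume bookkeeping on balls, not through an upgrade of ${\rm(BR^{\alpha}_{p,q,m})}$. In short: steps~(2) and the final Besov-summation idea are on the right track, but steps~(1), (3), and the sum in~(4) are built on a finite decomposition that collapses the whole argument; you need the infinite Fourier-dual Littlewood--Paley decomposition of $G$ and the variable-scale covering $\rho_\ell=2^{\ell(1+\varepsilon)}t$.
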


\begin{proof}
Let   $\phi \in C_c^{\infty}(\mathbb R) $   be  a function such that $\supp \phi\subseteq \{ \xi: 1/4\leq |\xi|\leq 1\}$ and
 $
\sum_{\ell\in \ZZ} \phi(2^{-\ell} \lambda)=1$ for all
${\lambda>0}.
$
Set $\phi_0(\lambda)= 1-\sum_{\ell=1}^{\infty} \phi(2^{-\ell} \lambda)$,
\begin{eqnarray}\label{e3.19}
G^{(0)}(\lambda)=\frac{1}{2\pi}\int_{-\infty}^{+\infty}
 \phi_0(\tau) \hat{G}(\tau) e^{i\tau\lambda} \;d\tau
\end{eqnarray}
and
\begin{eqnarray}\label{e3.20}
G^{(\ell)}(\lambda) =\frac{1}{2\pi}\int_{-\infty}^{+\infty}
 \phi(2^{-\ell}\tau) \hat{G}(\tau) e^{i\tau\lambda} \;d\tau,
\end{eqnarray}
where $G(\lambda)=F(\sqrt[m]\lambda) e^{\lambda}$.  Note that   by the Fourier inversion formula
$$
G(\lambda)=\sum_{\ell=0}^{\infty}G^{(\ell)}(\lambda).
$$
Then
\begin{eqnarray}\label{e3.21}
F(\sqrt[m]\lambda)= G(\lambda)e^{-\lambda}
&=&\sum_{\ell=0}^{\infty}G^{(\ell)}(\lambda)e^{-\lambda}
=:\sum_{\ell=0}^{\infty}F^{(\ell)}(\sqrt[m]\lambda)
\end{eqnarray}
so for any $f\in L^r(X)$
\begin{eqnarray}\label{e3.22}
\big\|F(t\SL)f\big\|_{s } &\le&
\sum_{\ell=0}^{\infty}\big\|F^{(\ell)}(t\sqrt[m]{L}) f\big\|_{s}, \ \ \ \ \ p\leq r  \leq s\leq q.
\end{eqnarray}

Next we   fix  $ \varepsilon>0$  such that
\begin{equation}\label{epsilon}
2n\varepsilon [({1/s}-{1/r})+ ({1/p}-{1/q})]\leq  \beta-n({1/s}-{1/r})-n({1/p}-{1/q})-\alpha-1.
\end{equation}
For every $t>0$ and every  $\ell$ set $\rho_{\ell}=2^{\ell(1+\varepsilon)}t$.
Then  we choose a sequence $(x_n)  \in X$ such that
$d(x_i,x_j)> \rho_{\ell}/10$ for $i\neq j$ and $\sup_{x\in X}\inf_i d(x,x_i)
\le \rho_{\ell}/10$. Such sequence exists because $X$ is separable.
Now set $B_i=B(x_i, \rho_{\ell})$ and define $\widetilde{B_i}$ by the formula
$$\widetilde{B_i}=\bar{B}\left(x_i,\frac{\rho_{\ell}}{10}\right)\setminus
\bigcup_{j<i}\bar{B}\left(x_j,\frac{\rho_{\ell}}{10}\right),$$
where $\bar{B}\left(x, \rho_{\ell}\right)=\{y\in X \colon d(x,y)
\le \rho_{\ell}\}$. Note that for $i\neq j$,
 $B(x_i, \frac{\rho_{\ell}}{20}) \cap B(x_j, \frac{\rho_{\ell}}{20})=\emptyset$.

Observe that for every $k\in{\mathbb N}$,
\begin{eqnarray}\label{e3.23}
\sup_i\#\{j:\;d(x_i,x_j)\le  2^k\rho_{\ell}\} &\le&
  \sup_{d(x,y) \le 2^k\rho_{\ell}}  {V(x, 2^{k+1}\rho_{\ell})\over
    V(y, \frac{\rho_{\ell}}{20})}\nonumber\\
   &\le&
  \sup_y  {V(y, 2^{k+2}\rho_{\ell})\over
  V(y, \frac{\rho_{\ell}}{20})}
    \le C 2^{kn}.
\end{eqnarray}
Set
$
\D_{\rho_{\ell}}=\{ (x,\, y)\in X\times X: {d}(x,\, y) \le \rho_{\ell} \}.
$
It is not difficult to see that
\begin{eqnarray}\label{e3.24}
\D_{\rho_{\ell}}  \subseteq \bigcup_{\{i,j:\, d(x_i,x_j)<
 2 \rho_{\ell}\}} \widetilde{B}_i\times \widetilde{B}_j \subseteq \D_{4 \rho_{\ell}}.
\end{eqnarray}

Now let  $\psi \in C_c^\infty(1/16, 4)$ be a function such
that $\psi(\lambda)=1$ for $\lambda \in (1/8,3)$,
and  we decompose
\begin{eqnarray}\label{e3.25}
 F^{(\ell)}(t\sqrt[m]{L})f
&=& \sum_{i,j:\, {d}(x_i,x_j)< 2\rho_{\ell}} P_{\widetilde B_i}\big[\psi F^{(\ell)} (t\sqrt[m]{L}) \big]P_{\widetilde
B_j}f\nonumber\\
  &&+ \sum_{i,j:\, {d}(x_i,x_j)< 2\rho_{\ell}} P_{\widetilde B_i}\big[(1-\psi)
F^{(\ell)} (t\sqrt[m]{L}) \big] P_{\widetilde
B_j}f\nonumber\\
&&+  \sum_{i,j:\, {d}(x_i,x_j)\geq 2\rho_{\ell}} P_{\widetilde B_i}F^{(\ell)}(t\sqrt[m]{L}) P_{\widetilde
B_j}f=I+ I\!I +I\!I\!I.
\end{eqnarray}

\medskip

\noindent
{\underline{\em Estimate for  {\it I}}}.   \ Note that $p\leq r  \leq s\leq q$.
By
 H\"older's inequality
\begin{eqnarray}\label{e3.26}
 \| \sum_{i,j:\, {d}(x_i,x_j)< 2\rho_{\ell}} P_{\widetilde B_i}\big(\psi F^{(\ell)}\big) (t\sqrt[m]{L}) P_{\widetilde
B_j}f\|_{s}^s
 =\sum_i \|\sum_{j:\,{d}(x_i,x_j)<
2\rho_{\ell}} P_{\widetilde B_i}\big(\psi F^{(\ell)}\big)(t\sqrt[m]{L})P_{\widetilde B_j}f\|_{s}^s  \nonumber \\
\le C   \sum_i \sum_{j:\,{d}(x_i,x_j)< 2\rho_{\ell}} \| P_{\widetilde
B_i}\big(\psi F^{(\ell)}\big)(t\sqrt[m]{L})P_{\widetilde B_j}f\|_{s}^s
\nonumber \\
\le C  \sum_i   \sum_{j:\,{d}(x_i,x_j)< 2\rho_{\ell}}
V(\widetilde{B}_i)^{s({1\over s}-{1\over q})} \|P_{\widetilde
B_i}\big(\psi F^{(\ell)}\big)(t\sqrt[m]{L})P_{\widetilde B_j}f\|_{q}^s
\nonumber \\ \le C   \sum_j     V( {B}_j)^{s({1\over s}-{1\over
q})} \|\big(\psi F^{(\ell)}\big)(t\sqrt[m]{L})P_{ \widetilde B_j}f\|_{q}^s
\nonumber \\
\le C  \sum_j V( {B}_j)^{s({1\over s}-{1\over
q})}\|\big(\psi F^{(\ell)}\big)(t\sqrt[m]{L})P_{\widetilde B_j}\|_{r\to q}^s
\|P_{\widetilde B_j}f\|_{r}^s
\nonumber \\
 \le C \sup_{x\in X}\big\{V(x,\rho_{\ell})^{s({1\over s}-{1\over
q})} V(x,\rho_{\ell})^{s({1\over p}-{1\over
r})}
\|\big(\psi F^{(\ell)}\big)(t\sqrt[m]{L}) P_{\widetilde B_j}\|^s_{p\to q}\big\}  \sum_j\|P_{\widetilde
B_j}f\|_{r}^s
\nonumber \\
=  C   \sup_{x\in X}\big\{V(x,\rho_{\ell})^{s({1\over s}-{1\over
r})}  V(x,\rho_{\ell})^{s({1\over p}-{1\over
q})} \big\} \|\big(\psi F^{(\ell)}\big)(t\sqrt[m]{L}) P_{\widetilde B_j}\|^s_{p\to q}\big\}  \|f\|_{r}^s.
\end{eqnarray}

In Proposition~\ref{prop3.3} we assume
$\supp F \subset [R/2, R]$. To adjust the multipliers which we consider here
to this requirement
we write
$
 \big(\psi F^{(\ell)}\big)(t\sqrt[m]{L})
=\sum_{k=0}^{7}  \big(\chi_{[2^{k-4}, 2^{k-3}) }\psi
F^{(\ell)}\big)(t\sqrt[m]{L})
$
Now by  Proposition~\ref{prop3.3} for every  $\ell \ge 4$,
  \begin{eqnarray*}
&&\hspace{-0.8cm} \big\|\big(\psi F^{(\ell)}\big)(t\sqrt[m]{L})P_{B(x, \rho_\ell)}\big\|_{p \to q}
 \\
&\leq&  C\sum_{k=0}^{7}  V(x,\rho_\ell)^{{1\over q}-{1\over p}}2^{\ell(1+\varepsilon)n({1\over
p}-{1\over q})} \big\| \delta_{2^{k-3}t^{-1}}\big(\psi F^{\ell}\big)(t\cdot) \big\|_{WS^{\alpha+1, 1} }
 \nonumber
 \end{eqnarray*}
 Next we note that it follows from \eqref{e3.20}
 that
 $$
 \big\| G^{(\ell)} \big\|_{WS^{\alpha+1, 1} }
 \leq  C2^{(\alpha+1)\ell} \|G^{(\ell)}\|_1
 $$
Hence  for $k=0, 1, \ldots, 7,$
  \begin{eqnarray*}
\big\| \delta_{2^{k-3}t^{-1}}\big(\psi F^{\ell}\big)(t\cdot) \big\|_{WS^{\alpha+1, 1} }
&\leq&C  \big\| G^{(\ell)} \big\|_{WS^{\alpha+1, 1} }
 \leq  C2^{(\alpha+1)\ell} \|G^{(\ell)}\|_1
 \end{eqnarray*}
This gives
 \begin{eqnarray*}
  \big\|\big(\psi F^{(\ell)}\big)(t\sqrt[m]{L})P_{B(x, \rho_\ell)}\big\|_{p\to q}
&\leq&    V(x,\rho_\ell)^{{1\over q}-{1\over p}}2^{\ell(1+\varepsilon)n({1\over
p}-{1\over q})} 2^{( \alpha +1)\ell} \|G^{(\ell)}\|_1 \nonumber
 \end{eqnarray*}
for every $\ell\geq 4.$ On the other hand, for
 $\ell=0, 1,2,3$,  we note that   by Proposition 2.3 of \cite{SYY},
  ${\rm (G_{p_0,2,m})} \Rightarrow {\rm (ST^{\infty}_{p_0, 2, m})}$,
  and thus
 $\big\|\big(\psi F^{(\ell)}\big)(t\sqrt[m]{L})P_{B(x, \rho_\ell)}\big\|_{p_0\to 2 }
 \leq
   CV(x,\rho_\ell)^{{1\over 2}-{1\over p_0}}  \|F  \|_{1}.
   $
Since  $V(x, \rho)\geq C\rho^n$ for all $x\in X$ and $\rho>0$,
we have that
 \begin{eqnarray}\label{e3.27}
&&\hspace{-1.5cm}
\sum_{\ell=1}^{\infty}
 \| \sum_{i,j:\, {d}(x_i,x_j)< 2\rho_{\ell}} P_{\widetilde B_i}\big(\psi F^{(\ell)}\big) (t\sqrt[m]{L}) P_{\widetilde
B_j}f\|_{s}\nonumber\\
 &\le &
\sum_{\ell=1}^{3} \sup_{x\in X}\big\{ V(x,\rho_{\ell})^{  {1\over s}-{1\over
r} }  V(x,\rho_{\ell})^{{1\over p }-{1\over
2}}\big\|\big(\psi F^{(\ell)}\big)(t\sqrt[m]{L})P_{B(x,\rho_{\ell})}\big\|_{p_0\to
2} \big\}\nonumber\\
 &+&
\sum_{\ell=4}^{\infty} \sup_{x\in X}\big\{ V(x,\rho_{\ell})^{  {1\over s}-{1\over
r} }  V(x,\rho_{\ell})^{{1\over p }-{1\over
q}}\big\|\big(\psi F^{(\ell)}\big)(t\sqrt[m]{L})P_{B(x,\rho_{\ell})}\big\|_{p \to
q} \big\}\nonumber\\
 &\le & C t^{ n({1\over s}-{1\over r})}
 \Big(\|F  \|_{1} +   \sum_{\ell=4}^{\infty}  2^{\ell(1+\varepsilon) n({1\over s}-{1\over r})}
  2^{\ell(1+\varepsilon) n({1\over p }-{1\over q})}2^{( \alpha +1)\ell} \|G^{(\ell)}\|_1\Big)
\nonumber\\
 &\leq &  C t^{ n({1\over s}-{1\over r})}
  \|G\|_{B_{1,\, 1}^{\gamma }},
\end{eqnarray}
where  $\gamma=n(1/p-1/r)+n(1/s -1/q) +\alpha+1+\delta$ and $\delta={\varepsilon n }({1/p}-{1/r})
 +{\varepsilon n }({1/s }-{1/q})$.
The last inequality follows from definition of Besov space.
See e.g. \cite[Chap.~VI ]{BL}.
By \eqref{epsilon}
 $$W^{\beta, 1}\subseteq
B_{1, \, 1}^{ \gamma} \quad \mbox{ with} \quad
 \|G\|_{B_{1,\, 1}^{ \gamma}}\le C_\alpha
\|G\|_{W^{\beta, 1}}$$
where $\gamma=n(1/p-1/r)+n(1/s -1/q) +\alpha+1+\delta$,  see again
\cite{BL}.  However, $\supp F\subseteq [1/4, 4]$ so $ \|G\|_{W^{\beta, 1}}\le
\|F\|_{W^{\beta, 1}}$. Hence the forgoing estimates give
\begin{eqnarray}\label{e3.28}
{\rm LHS \ of \ \eqref{e3.27}} \le  Ct^{ n({1\over s}-{1\over r})}\|F\|_{W^{\beta, 1}}.
\end{eqnarray}

\noindent
{\underline{\em Estimate of   $I\!I$}}.  \
Repeat an argument  leading up to  \eqref{e3.26}, it is easy to see that
\begin{eqnarray*}
 \|\sum_{i,j:\, {d}(x_i,x_j)< 2\rho_{\ell}} P_{\widetilde B_i}\big((1-\psi)
F^{(\ell)}\big) (t\sqrt[m]{L}) P_{\widetilde
B_j}f \|_{  s}
&\leq&C\sup_{x\in X} \|\big((1-\psi) F^{(\ell)}\big)(t\sqrt[m]{L})P_{B(x,\rho_{\ell})}\|_{r\to s}\|f\|_r  \\
&\leq &C\|\big((1-\psi) F^{(\ell)}\big)(t\sqrt[m]{L}) \|_{r\to s} \|f\|_r,
\end{eqnarray*}
where, for a fixed $N$,  one has the uniform estimates
 \begin{eqnarray*}
\Big|\Big({d\over d\lambda}\Big)^{\kappa} \big((1-\psi) F^{(\ell)}\big)(\lambda)\Big|\leq C_{\kappa}2^{-\ell N}
 (1+|\lambda|)^{-N}\|F\|_{L^1(\R)}.
\end{eqnarray*}
But   Proposition~\ref{prop2.1} then implies that for every $p\leq r\leq s\leq q$,
 \begin{eqnarray*}
  \|\big((1-\psi) F^{(\ell)}\big)(t\sqrt[m]{L}) \|_{r\to s}
  &\leq&   \|\big((1-\psi) F^{(\ell)}\big)(t\sqrt[m]{L}) (1+t\sqrt[m]L)^M\|_{s\to s}
 \|  (1+t\sqrt[m]L)^{-M}\|_{r\to s}\nonumber\\
   &\leq& C  2^{-\ell N}t^{{n }({1\over r}-{1\over s})}\|F\|_{L^1(\R)},
\end{eqnarray*}
which gives
  \begin{eqnarray}\label{e3.29}
\sum_{\ell=0}^{\infty}
\|\big((1-\psi) F^{(\ell)}\big)(t\sqrt[m]{L}) \|_{r\to s} \leq Ct^{{n }({1\over r}-{1\over s})}  \|F\|_{L^1(\R)}.
\end{eqnarray}

\medskip

\noindent
{\underline {\em Estimate of  $I\!I\!I$}}.  \ Note that
\begin{eqnarray*}
\big\|\sum_{i,j:\, {d}(x_i,x_j)\geq 2^{\ell(1+\varepsilon)} t}
P_{\widetilde B_i}F^{(\ell)}(t\sqrt[m]{L}) P_{\widetilde
B_j} f \big\|_s^s&=& \sum_{i} \big\|\sum_{j:\, {d}(x_i,x_j)\geq 2^{\ell(1+\varepsilon)} t}
P_{\widetilde B_i}F^{(\ell)}(t\sqrt[m]{L}) P_{\widetilde
B_j} f \big\|_s^s\\
&\leq& \sum_{i}\Big( \sum_{j:\, {d}(x_i,x_j)\geq 2^{\ell(1+\varepsilon)} t} \big\|
P_{\widetilde B_i}F^{(\ell)}(t\sqrt[m]{L}) P_{\widetilde
B_j} f \big\|_s \Big)^s.
\end{eqnarray*}
 Recall that  $G(\lambda)=F(\sqrt[m]\lambda) e^{\lambda}$.
 By the formula  \eqref{e3.20}, it follows from an argument as in \cite[Lemma 4.3]{SYY} that
 For
all  $\ell=0, 1, 2, \ldots$  and  all $x_i, x_j$ with  ${d}(x_i,x_j)\geq 2^{\ell(1+\varepsilon)} t$,   there exist
some positive constants $C, c_1, c_2>0$ such that for $p_0 <r\leq s<p'_0$,
\begin{eqnarray}
\label{e3.30}
&&\hspace{-1.5cm}
 \big\| P_{\widetilde B_i}F^{(\ell)}(t\sqrt[m]{L}) P_{\widetilde
 B_j} f\big\|_s\nonumber\\
   &\leq&  C
  \|P_{\widetilde B_j} f\|_r \int_{-\infty}^{+\infty}
 |\phi(2^{-\ell}\tau) \hat{G}(\tau)|
 \big\| P_{\widetilde B_i}e^{(i\tau-1)t^mL}  P_{\widetilde B_j}  \big\|_{r\to s}
  \;d\tau, \nonumber\\
  &\leq&
C  t^{n({1\over r}-{1\over s})} e^{-c_1 2^{ \varepsilon \ell m\over m-1}  }
\exp\Big(- c_2\Big({d(x_i, x_j)\over  2^{\ell} t}\Big)^{m\over m-1}\Big) \|F\|_{1} \|P_{\widetilde B_j} f\|_r.
\end{eqnarray}
which, together with   the Cauchy-Schwarz inequality, yields
 \begin{eqnarray*}
&&\hspace{-1cm}\big\|\sum_{i,j:\, {d}(x_i,x_j)\geq 2^{\ell(1+\varepsilon)} t}
P_{\widetilde B_i}F^{(\ell)}(t\sqrt[m]{L}) P_{\widetilde
B_j} f \big\|_s^s\\
&\leq & Ct^{{ns }({1\over r}-{1\over s})}e^{- c_1s 2^{\varepsilon\ell  m\over m-1}  } \|F\|_{1}^s
\sum_{i} \Big\{ \sum_{j:\, {d}(x_i,x_j)\geq 2^{\ell(1+\varepsilon)} t}
 \exp\Big(- c_2\Big({d(x_i, x_j)\over  2^{\ell} t}\Big)^{m\over m-1}\Big)
  \|P_{\widetilde B_j} f\|_r\Big\}^s \\
	&\leq&  Ct^{{n s}({1\over r}-{1\over s})}e^{- c_1 s 2^{\varepsilon\ell  m\over m-1}  } \|F\|_{1}^s
	 \left(\sum_{ j}  \|P_{\widetilde B_j} f\|_r^r\right)^{s/r}
	 \sum_{i:\, {d}(x_i,x_j)\geq 2^{\ell(1+\varepsilon)} t}
\exp\Big(-c_2\Big({d(x_i, x_j)\over 2^{\ell} t}\Big)^{m\over m-1}\Big)  \\
	&\leq&  Ct^{{ns }({1\over r}-{1\over s})}e^{- c_1s  2^{\varepsilon\ell  m\over m-1}  } \|F\|_{1}^s  \| f\|_r^s.
\end{eqnarray*}
 Therefore,
 \begin{eqnarray}\label{e3.31}
 \sum_{\ell=0}^{\infty}    \big\|  \sum_{i,j:\, {d}(x_i,x_j)\geq 2^{\ell(1+\varepsilon)} t}
P_{\widetilde B_i}F^{(\ell)}(t\sqrt[m]{L}) P_{\widetilde
B_j} f\big\|_s &\leq& C t^{{n }({1\over r}-{1\over s})}\sum_{\ell=0}^{\infty} e^{- c_1
 2^{\varepsilon\ell m\over m-1}  } \|F\|_{ 1} \|f\|_r\nonumber\\
&\leq& C t^{{n }({1\over r}-{1\over s})} \|F\|_{ 1} \|f\|_r.
\end{eqnarray}
Estimate \eqref{e3.18}  then follows  from  \eqref{e3.22},  \eqref{e3.25},
\eqref{e3.27},   \eqref{e3.28}, \eqref{e3.29}  and \eqref{e3.31}.
This completes the proof of Theorem~\ref{th3.5}.
\end{proof}

\smallskip

\begin{remark}\label{re3.6} From the proof of Theorem~\ref{th3.5}, we can see that the result of
$$\sup_{t>0} \|F(t\sqrt[m]{L})\|_{r\to r} \leq
C
$$
in Theorem~\ref{th3.5} (i.e., $r=s$ in \eqref{e3.18})
holds under
the assumption that $(X, d, \mu)$ satisfies the doubling condition \eqref{e2.1} only. In this case,
we do not need the assumption that  $V(x, \rho)\geq C\rho^n$ for all $x\in X$ and $\rho>0$.
See also Theorem 4.2, \cite{SYY}.
\end{remark}

The following corollary is a consequence of Theorem~\ref{th3.5}.

\begin{coro}\label{coro3.6}
Suppose that  there exists a constant $C>0$ such that  $V(x, \rho)\geq C\rho^n$ for all $x\in X$ and $\rho>0$.
Next assume that
  a non-negative self-adjoint operator $L$ acting on $L^2(X)$ satisfies  estimates \eqref{DG}
  and ${\rm (G_{p_0, 2, m})}$   for some $1\leq p_0< 2.$
Then   ${\rm (BR^{ \alpha}_{p, q, m})}$  for  $\alpha\geq -1$ and $ p_0< p<q< p'_0$,
implies
  $$\left\|  S^{\delta}_{R^m}(L)\right\|_{r\to s}  \leq C
R^{{n}({1\over r}- {1\over s}) } $$
for all  $ p\leq r\leq s \leq q$  and ${\rm Re}\, \delta>\alpha +n({1/p}-{1/r})
+n({1/s }-{1/q}).$

In particular, if
   ${\rm (BR^{ \alpha}_{p_\alpha, q_\alpha, m})}$    holds for
\begin{eqnarray}\label{e3.32}
\left({1\over p_\alpha}, {1\over q_\alpha}\right)
=  \left({n+1+2\alpha\over 2n}-{2\alpha\over n+1}, {n+1+2\alpha\over 2n}\right),
\end{eqnarray}
then
  $$\left\|S^{\delta}_{R}(L)\right\|_{r\to r} \leq C
$$
for all $p_{\alpha}\leq r\leq p_{\alpha}'$
and $\delta>{n }({1/p_{\alpha}}-{1/2})-{1/2}$.
\end{coro}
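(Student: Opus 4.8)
The plan is to deduce the off-diagonal Bochner--Riesz bound from Theorem~\ref{th3.5} by a Littlewood--Paley decomposition of the multiplier $(1-\lambda/R^m)^\delta_+$ localized at the edge $\sqrt[m]\lambda=R$, and then to sum the dyadic pieces against a geometric series whose convergence is exactly what the hypothesis $\Re\delta>\alpha+n(1/p-1/r)+n(1/s-1/q)$ guarantees; the diagonal ``in particular'' statement will then follow by duality and interpolation with the trivial $L^2$ bound.

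First I would fix $\tilde\eta\in C^\infty(\R)$ with $\tilde\eta\equiv1$ on $(-\infty,1/2]$ and $\supp\tilde\eta\subseteq(-\infty,3/4]$, and $\phi\in C_c^\infty(1/4,1)$ with $\sum_{j\ge1}\phi(2^jt)=1$ for $t\in(0,1/2]$. Writing $s=\lambda/R^m$, the exact identity $(1-s)^\delta_+=\tilde\eta(s)(1-s)^\delta_++\sum_{j\ge1}\big(1-\tilde\eta(s)\big)(1-s)^\delta_+\phi\big(2^j(1-s)\big)$ becomes, after substituting $u=\sqrt[m]\lambda/R$ (so $s=u^m$),
$$
\Gamma(\delta+1)\,S^\delta_{R^m}(L)=M\big(\tfrac{\SL}{R}\big)+\sum_{j\ge1}G_j\big(\tfrac{\SL}{R}\big),
$$
where $M(u)=\tilde\eta(u^m)(1-u^m)^\delta_+$ is a fixed $C_c^\infty$ function supported in $[0,1)$, and each $G_j$ is supported in a subinterval of $[(1/2)^{1/m},1)\subseteq[1/4,4]$ of length $\sim2^{-j}$, with $\|G_j\|_\infty\lesssim2^{-j\Re\delta}$; since differentiating $(1-u^m)^\delta$ on the support of $G_j$ only produces factors $O(2^j)$, one gets $\|G_j\|_{W^{\beta,1}}\lesssim2^{j(\beta-\Re\delta-1)}$ for every $\beta\ge0$. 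For the smooth term I would write $M(u)=\big(M(u)(1+u^m)^N\big)(1+u^m)^{-N}$: the $R$-dilate of $M(u)(1+u^m)^N$ is a fixed $C_c^\infty$ function with $C^k$-norms uniform in $R$, hence bounded on $L^s$ uniformly in $R$ by Proposition~\ref{prop2.2}(i); and Proposition~\ref{prop2.1}, together with the assumed lower bound $V(x,\rho)\ge C\rho^n$, gives $\|(1+L/R^m)^{-N}\|_{r\to s}\lesssim R^{n(1/r-1/s)}$ once $N>n(1/r-1/s)$. Thus $\|M(\SL/R)\|_{r\to s}\lesssim R^{n(1/r-1/s)}$.

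For the edge pieces, since $\supp G_j\subseteq[1/4,4]$ I would apply Theorem~\ref{th3.5} with $t=1/R$: for any $\beta>n(1/p-1/r)+n(1/s-1/q)+\alpha+1$,
$$
\|G_j(\SL/R)\|_{r\to s}\le CR^{n(1/r-1/s)}\|G_j\|_{W^{\beta,1}}\le CR^{n(1/r-1/s)}\,2^{j(\beta-\Re\delta-1)}.
$$
The interval of admissible $\beta$, namely $\big(n(1/p-1/r)+n(1/s-1/q)+\alpha+1,\ \Re\delta+1\big)$, is nonempty precisely under the stated hypothesis on $\Re\delta$, and for any such $\beta$ the series $\sum_{j\ge1}2^{j(\beta-\Re\delta-1)}$ converges; adding the main term, $\|S^\delta_{R^m}(L)\|_{r\to s}\le CR^{n(1/r-1/s)}$ uniformly in $R>0$, which is the first assertion.

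For the last statement, take $(p,q)=(p_\alpha,q_\alpha)$ and $r=s$. A direct computation gives $1/p_\alpha-1/q_\alpha=-2\alpha/(n+1)$, whence $\alpha+n(1/p_\alpha-1/q_\alpha)=n(1/p_\alpha-1/2)-1/2$, so the first assertion already gives $\sup_{R>0}\|S^\delta_R(L)\|_{r\to r}\le C$ for $p_\alpha\le r\le q_\alpha$ and $\Re\delta>n(1/p_\alpha-1/2)-1/2$ (passing from $S^\delta_{R^m}$ to $S^\delta_{R}$ is a harmless reparametrization). Since the adjoint of $S^\delta_R(L)$ is $S^{\bar\delta}_R(L)$, which obeys the same bound, duality extends this to $q_\alpha'\le r\le p_\alpha'$, and interpolation with the spectral bound $\|S^\delta_R(L)\|_{2\to2}\le|\Gamma(\delta+1)|^{-1}$ fills the remaining intermediate range, yielding the claim for all $p_\alpha\le r\le p_\alpha'$. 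I expect the main point of care to be the bookkeeping in this decomposition---arranging that the loss $2^{j\beta}$ coming from the $W^{\beta,1}$-norm in Theorem~\ref{th3.5} is beaten by the gain $2^{-j(\Re\delta+1)}$ from the edge localization, which works exactly when $\beta$ can be chosen in the nonempty window above; a secondary point is that Theorem~\ref{th3.5} only supplies the diagonal estimate on $[p_\alpha,q_\alpha]$, so the duality-and-$L^2$-interpolation step is genuinely needed to reach the full symmetric range $[p_\alpha,p_\alpha']$.
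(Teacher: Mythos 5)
Your proof is correct and follows essentially the same strategy as the paper's: split the multiplier $(1-\lambda)^\delta_+$ into a smooth piece supported away from the edge (handled by \(\|(1+L/R^m)^{-N}\|_{r\to s}\lesssim R^{n(1/r-1/s)}\) from Propositions~\ref{prop2.1}--\ref{prop2.2}) and an edge piece supported near $\lambda=1$ (handled by Theorem~\ref{th3.5} via a $W^{\beta,1}$ bound with the admissible window $\beta\in(n(1/p-1/r)+n(1/s-1/q)+\alpha+1,\,\Re\delta+1)$), then obtain the diagonal statement via the adjoint $S^\delta_R(L)^*=S^{\bar\delta}_R(L)$ and interpolation with the $L^2$ bound. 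The only variation is cosmetic: you derive the required $W^{\beta,1}$ control of the edge piece by an explicit dyadic decomposition and geometric summation, whereas the paper simply cites the Sobolev-norm estimate $\|(1-\lambda^m)^\delta_+\|_{W^{s,1}}\le Ce^{c|\tau|}$ for $s<\Re\delta+1$ from \cite[Lemma 4.4]{BGSY}.
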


\begin{proof} Let $F(\lambda) = (1-\lambda^m)^{\delta}_+$ and  $\delta=\sigma+i\tau$.  We set
$$
 F(\lambda) =F(\lambda)  \phi(\lambda^m) + F(\lambda)(1-\phi(\lambda^m))=:F_1(\lambda^m) +F_2(\lambda^m),
$$
 where   $\phi\in C^{\infty}(\R)$ is supported in  $ \{\lambda:  |\lambda| \geq 1/4 \}$
 and $\phi =1$ for all $|\lambda|\geq 1/2$.
  It is known   that
if $0<s<\sigma+ 1 $,
   then $(1-|\lambda|^m)_+^\delta\in  W^{s,1}(\mathbb{R})$ with
$
\big\| (1-|\lambda|^m)_+^\delta\big\|_{W^{s, 1}(\mathbb{R})}\leq C\, e^{c|\tau|}
$
for constants $C,c>0$ independent of   $s$, see for example  \cite[Lemma 4.4]{BGSY}.
  This, in combination with   Theorem~\ref{th3.5},
shows that
$$ \sup_{R>0} R^{n(\frac{1}{s}-\frac{1}{r})} \left\|F_2\left({L\over R^m}\right)\right\|_{r\to s}\leq C$$
  for all for   $ p \leq r\leq s \leq q$  and $\sigma>\alpha +n({1/p}-{1/r})
+n({1/s}-{1/q}).$
 On the other hand,
we note that  $V(x, \rho)\geq C\rho^n$ for all $x\in X$ and $\rho>0$.
 by Propositions  \ref{prop2.1} and  \ref{prop2.2}  that for $p_0 <r<s<p'_0$ and
 for $N>m(1/p-1/2)$,
\begin{eqnarray*}
\|F_1\left({L\over R^m}\right)\|_{r\to s}&=&
\left\|F_1\left({L\over R^m}\right)\left(1+{  L\over R^m}\right)^{N/m}\right\|_{r\to r}
\left\| \left(1+ {L\over R^m}\right)^{-N/m} \right\|_{r\to s}\nonumber\\
&\leq&   C R^{{n}({1\over r}-{1\over s})}.
\end{eqnarray*}
This proves  ${\rm (BR^{ \delta}_{r, s, m})}$.

Now  assume \eqref{e3.32}. It follows  that
 $
 \|S_R^{\delta}(\sqrt[m]L)\|_{ p_{\alpha}\to
p_\alpha}\leq C
 $
 for ${\rm Re}\, \delta>\alpha +n({1/p_{\alpha}}-{1/q_\alpha})
={n }({1/p_{\alpha}}-{1/2})-{1/2}.$
By  duality and interpolation, ${\rm (BR^{ \delta}_{r, r, m})}$  holds for all $p_{\alpha}\leq r\leq p_{\alpha}'$
and $\delta>{n }({1/p_{\alpha}}-{1/2})-{1/2}$.
 The proof is  complete.
\end{proof}

\subsection{Estimates for the Bochner-Riesz means with negative index}\
In previous section in Corollary \ref{coro3.6} we prove   that  one can narrow the
gap between $p$ and $q$ in condition
${\rm (BR^{ \alpha}_{p, q, m})}$ by increasing the order of Bochner-Riesz means $\alpha$.
In this section we describe relation between various  ${\rm (BR^{ \alpha}_{p, q, m})}$ of a different nature.
 This time the argument is based on $T^*T$ type argument and Stein's complex
interpolation. The heart of the matter in our discussion is the fact that Stein-Tomas restriction estimate
is essentially equivalent
with the full description of $L^p \to  L^q$ mapping properties of Bochner-Riesz means of order
$1/2$.

Given some $1\leq p< 2$, we set
\begin{eqnarray*}
A=\left( 1, {n+1+2\alpha\over 2n}\right),&& A'=\left({n-1-2\alpha\over 2n}, 0\right),\nonumber\\
B(p)=\left({n+1+2\alpha\over 2n}+\alpha-{2\alpha\over p}, {n+1+2\alpha\over 2n}\right),&&
B'(p)=\left({n-1-2\alpha\over 2n},  {2\alpha\over p}-\alpha+{n-1-2\alpha\over 2n}\right),\nonumber\\
C(p)=\left({1 \over p}, {n+1+2\alpha\over 2n}\right),&&
C'(p)=\left({n-1-2\alpha\over 2n},   1-{1 \over p}\right),\\
D(p)=\left({1\over 2} +{ \alpha }-{2 \alpha\over  p }, {1 \over 2 }\right),&&
D'(p)=\left({1 \over 2 }, {1\over 2}-{  \alpha }+{2 \alpha\over  p }\right).\nonumber
\end{eqnarray*}
Denote by
$\Delta_{\alpha}(p, n)$  the open pentagon with vertices $A, B(p), B'(p), A'$ and $(1,0)$.
 Namely,
  \begin{eqnarray*}
  \hspace{1cm}
 \Delta_{\alpha}(p, n)=\left\{ \left({1\over r}, {1\over s}\right)\in (0, 1)\times (0,1):\ \
 \min\Big(\frac{1}{r}-\frac{1}{2},\
 \frac{1}{2}-\frac{1}{s}\Big)> -{2\alpha+1\over 2n}, \ \ \alpha- \frac{2\alpha}{p}< \frac{1}{r}-\frac{1}{s}
  \right\}.
 \end{eqnarray*}
We are now in position to state our next result.

\begin{theorem}\label{th3.7}
Suppose that  there exists a constant $C>0$ such that
$C^{-1}\rho^n\leq V(x, \rho)\leq C\rho^n$ for all $x\in X$ and $\rho>0$.
Next assume that
  a non-negative self-adjoint operator $L$ acting on $L^2(X)$ satisfies  estimates \eqref{DG}
  and ${\rm (G_{p_0, 2, m})}$ for some $1\leq p_0<2$
  and   ${\rm (BR^{-1}_{p, p', m})}$  holds for some $p_0< p< 2$.
 $$\left\| S^{\alpha}_{R^m}(L)\right\|_{r\to s} \leq C
R^{{n}({1\over r}- {1\over s}) } $$
 if each of the following conditions holds:
\begin{itemize}
\item[(1)]   $  \alpha>  n(1/p-1/2)-1/2$,    $ p_0<r\leq s<p_0'$, $ r <q_\alpha$ and $q_\alpha' < s$ where
$q_\alpha=\max\{1,\frac{2n}{n+1+2\alpha}\}$.

\item[(2)]   $ n(1/p-1/2)-1/2 \ge  \alpha> 0$, $p_0<r\leq s<p_0'$,   $(1/r, 1/s)\in \Delta_\alpha(p,n)$
 and $(1/r, 1/s)$ is strictly below the lines joining  the point $(1/2,1/2)$ to $C(p)$  and $C'(p)$.

\item[(3)]   $ -1/2< \alpha\leq 0$, $p_0<r\leq s<p_0'$,   $(1/r, 1/s)\in \Delta_\alpha(p,n)$ and
  $(1/r, 1/s)$ is strictly below  the lines joining $D(p)$ to $C(p)$; $D(p)$ to $D'(p)$ and $D'(p)$ to $C'(p)$.

\item[(4)]      $-1<\alpha\leq  -1/2 $,     $ p_0<r\leq s<p_0'$,  $ \alpha- \frac{2\alpha}{p}< \frac{1}{r}-\frac{1}{s}$,
 $r<q_\alpha'$ and
 $q_\alpha <s$,
 where
$
1/q_\alpha=
 1+\alpha -{(2\alpha+1)/p}.
$
\end{itemize}
\end{theorem}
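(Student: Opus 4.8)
The plan is to deduce all four cases from the single hypothesis ${\rm (BR^{-1}_{p,p',m})}$ by a combination of a $T^*T$-type identity, Stein's complex interpolation, and the absorption/narrowing tricks already established in Lemmas \ref{le3.2}, \ref{lemma 3.6} and Corollary \ref{coro3.6}. First I would record the key algebraic fact underlying the whole argument: writing $S^{-1}_{R^m}(L)=R^{-1}dE_{\sqrt[m]L}(R)$ and using Stone's formula, the hypothesis ${\rm (BR^{-1}_{p,p',m})}$ is (by Lemma \ref{lemma 3.6} and the generalised Gaussian estimate ${\rm (G_{p_0,2,m})}$, which upgrades $(1+tH)^{-k}$ bounds from $p_0$ down to $p$) equivalent to the spectral measure bound $\|dE_L(\lambda)\|_{r\to r'}\le C\lambda^{\frac{n}{m}(1/r-1/r')-1}$ for all $p\le r\le 2$, or what is the same thing, the restriction-type condition ${\rm (ST^2_{r,2,m})}$ on that range. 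This is the analytic input; everything else is a deformation argument.

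Next I would build an analytic family of operators $T_z = e^{z^2}\,\Gamma(z+1)^{-1} S^{z}_{R^m}(L)$ (the $e^{z^2}$ factor absorbing the exponential-in-$\mathrm{Im}\,z$ growth of $\|(1-\lambda^m)_+^z\|_{W^{s,1}}$ from \cite[Lemma 4.4]{BGSY}), defined for $\mathrm{Re}\,z$ in a strip, and interpolate between two kinds of endpoints. On one end, for $\mathrm{Re}\,z$ large (say $\mathrm{Re}\,z > n(1/r-1/s)$ with the gap between $r,s$ open), Corollary \ref{coro3.6} together with Theorem \ref{th3.5} gives $\|S^z_{R^m}(L)\|_{r\to s}\le C R^{n(1/r-1/s)}$; combined with the trivial $L^2\to L^2$ bound $\|S^z_{R^m}(L)\|_{2\to 2}\le C$ valid for $\mathrm{Re}\,z\ge 0$, and with duality, one already gets case (1) directly — there $q_\alpha=\max\{1,2n/(n+1+2\alpha)\}$ is exactly the exponent making $n(1/q_\alpha-1/2)-1/2=\alpha$, so the condition $r<q_\alpha$, $q_\alpha'<s$ means one sits strictly inside the region where the "large $\mathrm{Re}\,z$" estimate of Corollary \ref{coro3.6} already applies after the absorption of Lemma \ref{le3.2}. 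On the other end, the $T^*T$ computation turns the $\alpha=-1$ hypothesis into an estimate at $\mathrm{Re}\,z=-1$: schematically $S^{-2\,\mathrm{Re}\,w-1}_{R^m}(L)$ factors (via the convolution identity $\chi_-^w*\chi_-^z=\chi_-^{w+z+1}$ from \eqref{e3.12}, i.e. $S^{a}*S^{b}\sim S^{a+b+1}$ in the functional calculus) as a product of $S^{\bar w}$ and $S^{w}$, and placing $\mathrm{Re}\,w=-1/2+it$ on the critical line yields an $L^p\to L^{p'}$ bound with operator norm $\lesssim R^{n(1/p-1/p')}$ precisely from ${\rm (BR^{-1}_{p,p',m})}$. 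Interpolating the $\mathrm{Re}\,z=-1$ line against the $\mathrm{Re}\,z=\text{large}$ line (using Stein's theorem on the analytic family) then sweeps out, for each intermediate value $\alpha=\mathrm{Re}\,z\in(-1,\text{large})$, a region of admissible $(1/r,1/s)$ which is exactly the polygon $\Delta_\alpha(p,n)$ intersected with the extra line constraints — cases (2), (3), (4) corresponding to $\alpha\in(0, n(1/p-1/2)-1/2]$, $\alpha\in(-1/2,0]$ and $\alpha\in(-1,-1/2]$ respectively. The points $A,B(p),C(p),D(p)$ and their primed reflections are nothing but the images, under the interpolation, of the corners $(1/p,0)$, $(1,1/p')$, $(1/2,1/2)$ of the $\alpha=-1$ data and the endpoint $(1,0)$; tracking which corner goes where, and checking that the "strictly below the line" conditions are exactly the open (as opposed to closed) conditions forced by the $e^{z^2}$ factor and by the fact that Theorem \ref{th3.5} needs $\beta$ strictly above the critical exponent, is the bookkeeping core of cases (2)--(3).

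The main obstacle I expect is case (3) (and the $\alpha=-1/2$ transition into case (4)): there the interpolation region is genuinely a pentagon rather than a quadrilateral, because the two "hard" constraints — the Stein–Tomas-type line $\min(1/r-1/2,\,1/2-1/s)>-(2\alpha+1)/(2n)$ coming from the $L^2$ endpoint, and the "Sobolev" line $\alpha-2\alpha/p<1/r-1/s$ coming from the embedding thresholds in Theorem \ref{th3.5} — are both active and cut off different corners. I would handle this by interpolating not just between two lines but among several extreme operators simultaneously: the pair $(L^2\to L^2$ at $\mathrm{Re}\,z=\alpha^+)$, the pair $(L^p\to L^{p'}$ at $\mathrm{Re}\,z=-1)$, and the pair $(L^r\to L^s$ off-diagonal at $\mathrm{Re}\,z$ large, from Corollary \ref{coro3.6}), so that the vertices $C(p),D(p),D'(p),C'(p)$ appear as the images of pairwise intersections. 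The verification that the resulting convex hull is precisely "$\Delta_\alpha(p,n)$ below the stated lines," with the correct open/closed character at each edge, is where the argument is most delicate; the rest is a routine, if lengthy, chase through duality ($S^\alpha_{R^m}(L)$ is self-adjoint, so the region is symmetric under $(1/r,1/s)\mapsto(1-1/s,1-1/r)$) and the already-proved absorption lemma \ref{le3.2}, which lets one shrink $(r,s)$ freely inside $(p_0,p_0')$ once an estimate is known for one admissible pair.
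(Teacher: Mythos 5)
Your overall strategy -- derive an $L^p\to L^2$ bound at order $-1/2$ by a $T^*T$ argument from the hypothesis at order $-1$, then invoke Stein's complex interpolation (Lemma \ref{le3.8}), duality, and the absorption Lemma \ref{le3.2} -- is the same mechanism the paper uses, and is correct. The paper organises the bookkeeping differently, and more transparently: instead of a single two-endpoint analytic family from $\Re z=-1$ to large $\Re z$, it first produces the estimate at the point $C(p)$ (by $T^*T$ followed by Corollary \ref{coro3.6}), then sweeps out the segment $D(p)$ by interpolating the $\alpha=-1/2$, $L^p\to L^2$ estimate against the trivial $L^2\to L^2$ bound at $\alpha=0$, and finally records case (1) via the observation (from Lemma \ref{lemma 3.6}) that ${\rm (BR^{-1}_{p_2,p_2',m})}$ for $p_2$ close to $2$ forces ${\rm (BR^{-1}_{p_1,p_1',m})}$ for every smaller $p_1>p_0$. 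That sequence of pairwise interpolations reaches exactly the stated regions while your one-shot family would still need to be sliced into the same sub-interpolations to control each edge of $\Delta_\alpha(p,n)$, so structurally the two arguments coincide.

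There is, however, a genuine local error in your $T^*T$ step. You write that $S^{-2\Re w-1}_{R^m}(L)$ factors as $S^{\bar w}S^{w}$ ``via the convolution identity $\chi_-^w*\chi_-^z=\chi_-^{w+z+1}$.'' That identity lives in the scalar variable and is used to produce the subordination formula \eqref{e3.14}; it has nothing to do with composing two operators in the functional calculus. When you compose Bochner--Riesz operators, the multipliers multiply, so
\[
S^{\bar w}(\sqrt[m]L)\,S^{w}(\sqrt[m]L)\;=\;\frac{\Gamma(\bar w+w+1)}{\Gamma(\bar w+1)\Gamma(w+1)}\,S^{\bar w+w}(\sqrt[m]L)\;\sim\;S^{2\Re w}(\sqrt[m]L),
\]
and there is no extra $-1$ in the exponent. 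Thus at $\Re w=-1/2+\varepsilon$ the $T^*T$ identity reads $\|S^{-1/2+\varepsilon}\|_{p\to 2}^2=\|S^{-1+2\varepsilon}\|_{p\to p'}$, exactly as in \eqref{e3.35}; your formula $S^{-2\Re w-1}$ evaluates to $S^{0}$ there, which is not what is needed. The conclusion you drew from this step is nevertheless the right one, so the error is self-correcting in practice, but as written the justification is wrong and should be replaced by the multiplicativity of the multiplier, not the convolution of $\chi_-$ distributions.
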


The proof of Theorem~\ref{th3.7} is based on the following interpolation result.

\begin{lemma}\label{le3.8}
Suppose that  there exists a constant $C>0$ such that  $V(x, r)\geq Cr^n$ for all $x\in X$ and $r>0$.
Next assume that
  a non-negative self-adjoint operator $L$ acting on $L^2(X)$ satisfies  estimates \eqref{DG}
  and ${\rm (G_{p_0, 2, m})}$ for some $1\leq p_0<2$
  and ${\rm (BR^{\delta_i}_{p_i, q_i, m})}$  holds
 for some $ \delta_i, p_i, q_i, i=1,2  $ such that
$p_0< p_i\leq q_i <p_0'$
and $ -\frac{2\delta_i}{n+1}\leq \Big(\frac{1}{p_i}-\frac{1}{q_i}\Big)$.
Then for every
$\theta\in (0, 1)$,
$$
\|S_\lambda^{\alpha } (\sqrt[m]L)  \|_{ p_{\theta}\to
q_{\theta}}
\leq C\lambda^{{n }({1\over p_{\theta}}-{1\over q_{\theta}})}, \ \ \ \ \ \lambda>0
$$
holds for   $\alpha> \delta_{\theta}=\theta \delta_1 +(1-\theta)\delta_2$ and
$$
{1\over p_{\theta}}={\theta\over p_1}  + {1-\theta\over p_2},\ \ \
{1\over q_{\theta}}={\theta\over q_1}  + {1-\theta\over q_2}.
 $$
\end{lemma}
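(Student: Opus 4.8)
The plan is to derive the estimate by Stein's theorem on interpolation of analytic families of operators (in the version allowing the Lebesgue exponents to vary linearly along the strip), the two endpoint inputs being produced from ${\rm (BR^{\delta_1}_{p_1, q_1, m})}$ and ${\rm (BR^{\delta_2}_{p_2, q_2, m})}$ via Corollary~\ref{coro3.6}. Fix $\theta\in(0,1)$ and $\alpha>\delta_\theta:=\theta\delta_1+(1-\theta)\delta_2$. Since $\alpha>\delta_\theta$ one may choose $\alpha_1>\delta_1$ and $\alpha_2>\delta_2$ with $\theta\alpha_1+(1-\theta)\alpha_2=\alpha$ (for instance $\alpha_i=\delta_i+\alpha-\delta_\theta$); note $\alpha_i>\delta_i\ge-1$. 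The idea is to interpolate the operators $S^{\alpha_i}_\lambda(\sqrt[m]L)$ at the two ends of the strip and read off the bound for $S^\alpha_\lambda(\sqrt[m]L)$ at the interior point $\theta$.

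First I would record the two endpoint estimates: for $i=1,2$, every $\sigma>\delta_i$ and every $\tau\in\R$,
\begin{equation}\label{eq:endpt}
\big\|S^{\sigma+i\tau}_\lambda(\sqrt[m]L)\big\|_{p_i\to q_i}\le C\,e^{c|\tau|}\,\lambda^{n(1/p_i-1/q_i)},\qquad\lambda>0,
\end{equation}
with $C,c$ independent of $\lambda,\tau$. When $p_i<q_i$ this is exactly Corollary~\ref{coro3.6} applied with $(p,q,\alpha)=(p_i,q_i,\delta_i)$ and $r=p_i$, $s=q_i$: the loss terms $n(1/p-1/r)+n(1/s-1/q)$ then vanish, so $\Re(\sigma+i\tau)=\sigma>\delta_i$ is the only requirement, while the factor $e^{c|\tau|}$ is precisely the exponential dependence on $\tau$ of $\big\|(1-(\cdot)^m)^{\sigma+i\tau}_+\big\|_{W^{s,1}(\R)}$ exploited in the proof of that corollary (the transition between Bochner-Riesz means of $L$ and of $\sqrt[m]L$ being provided by Lemma~\ref{le3.1}). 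If $p_i=q_i$ the claim reduces to uniform $L^{p_i}$-boundedness and is obtained the same way through the $r=s$ form of Theorem~\ref{th3.5} noted in Remark~\ref{re3.6}, which needs only the doubling condition.

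Next I would run the interpolation. On the closed strip $\{z\in\C:0\le\Re z\le1\}$ put
\begin{equation*}
\alpha(z)=(1-z)\alpha_2+z\alpha_1,\qquad \frac1{p(z)}=\frac{1-z}{p_2}+\frac{z}{p_1},\qquad \frac1{q(z)}=\frac{1-z}{q_2}+\frac{z}{q_1},
\end{equation*}
and define the analytic family
\begin{equation*}
T_z:=\frac{e^{z^2}\,\lambda^{-n(1/p(z)-1/q(z))}}{\Gamma(\alpha(z)+1)}\Big(1-\tfrac{\sqrt[m]L}{\lambda}\Big)^{\alpha(z)}_+ = e^{z^2}\,\lambda^{-n(1/p(z)-1/q(z))}\,S^{\alpha(z)}_\lambda(\sqrt[m]L).
\end{equation*}
Since $\Re\alpha(z)\ge\min(\alpha_1,\alpha_2)>-1$ on the strip, the symbols $(1-u/\lambda)^{\alpha(z)}_+/\Gamma(\alpha(z)+1)$ are, for each $u$, holomorphic in $z$ with at most subexponential growth in $|\Im z|$; together with the entire prefactors $e^{z^2}$ and $z\mapsto\lambda^{-n(1/p(z)-1/q(z))}$ this makes $z\mapsto\langle T_zf,g\rangle$ holomorphic on the open strip, continuous up to its boundary and of admissible growth, for $f,g$ in a dense class (one tests against sufficiently regular functions, since $S^z_\lambda$ need not be $L^2$-bounded when $\Re z<0$). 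On the line $\Re z=0$ one has $\Re\alpha(iy)=\alpha_2$ and $\big|\lambda^{-n(1/p(iy)-1/q(iy))}\big|=\lambda^{-n(1/p_2-1/q_2)}$, so \eqref{eq:endpt} gives
\begin{equation*}
\|T_{iy}\|_{p_2\to q_2}\le C\,\big|e^{(iy)^2}\big|\,e^{c|y|\,|\alpha_1-\alpha_2|}=C\,e^{-y^2+c'|y|},
\end{equation*}
which is bounded uniformly in $y\in\R$; the line $\Re z=1$ is handled identically with $(p_2,q_2,\alpha_2)$ replaced by $(p_1,q_1,\alpha_1)$. Stein's interpolation theorem then yields $\|T_\theta\|_{p(\theta)\to q(\theta)}\le C$ with $C$ independent of $\lambda$, and since $\alpha(\theta)=\alpha$, $p(\theta)=p_\theta$, $q(\theta)=q_\theta$ this is
\begin{equation*}
\big\|S^\alpha_\lambda(\sqrt[m]L)\big\|_{p_\theta\to q_\theta}\le C\,\lambda^{n(1/p_\theta-1/q_\theta)},\qquad\lambda>0,
\end{equation*}
as asserted.

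The main obstacle is the endpoint step \eqref{eq:endpt}: when $\alpha_i$ is only slightly larger than $\delta_i$, the symbol $(1-u)^{\alpha_i+i\tau}_+$ is merely H\"older near $u=1$, so no classical H\"ormander-type spectral multiplier theorem applies and one must pass through the Bochner-Riesz-adapted machinery of Section~\ref{sec3} --- Proposition~\ref{prop3.3} with its $WS^{\alpha+1,1}$ norm and the ball-packing argument of Theorem~\ref{th3.5}, as packaged in Corollary~\ref{coro3.6} --- all the while keeping the dependence on $\tau=\Im\alpha(z)$ explicitly exponential, which is exactly what the factor $|e^{z^2}|=e^{-y^2}$ built into the analytic family is designed to absorb. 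The hypothesis $-2\delta_i/(n+1)\le 1/p_i-1/q_i$ is the natural consistency condition inherited from the inputs and is not otherwise invoked.
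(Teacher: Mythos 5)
Your proposal is correct and follows essentially the same route as the paper: Corollary~\ref{coro3.6} (together with Lemma~\ref{le3.2}) produces the endpoint estimates with $\mathrm{Re}\,\delta>\delta_i$, and Stein's complex interpolation theorem for analytic families gives the conclusion. You have simply written out the details the paper leaves implicit --- the choice of $\alpha_i=\delta_i+\alpha-\delta_\theta$, the $e^{z^2}$ factor absorbing the exponential growth $e^{c|\tau|}$ of $\|(1-|\cdot|^m)^{\sigma+i\tau}_+\|_{W^{s,1}}$ inherited from the proof of Corollary~\ref{coro3.6}, and the diagonal case $p_i=q_i$.
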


\begin{proof}
By Lemma~\ref{le3.2} and Corollary~\ref{coro3.6} for any ${\rm Re}\,  \delta>\delta_i$
$$
\|S_\lambda^{\delta } (\sqrt[m]L)  \|_{ p_i\to
q_i}
\leq C\lambda^{{n }({1\over p_i}-{1\over q_i})}, \ \ \ \ \ \lambda>0
$$
for   $i=1, 2$. The proof then follows from Stein's classical complex  interpolation
theorem \cite{St1} for analytic families of operators.
\end{proof}

\medskip

\noindent
{\bf Proof of Theorem~\ref{th3.7}.}
We   first show that for all $ n(1/p-1/2)-1/2 \ge  \alpha> -1/2$ and every  $\varepsilon>0,$
 ${\rm (BR^{\alpha+\varepsilon}_{r, s, m})}$  holds  for $(1/r, 1/s)=C(p)=\left({1\over p}, {n+1+2\alpha\over 2n}\right).$

Indeed we assume that $L$ satisfies condition ${\rm (BR^{-1}_{p, p', m})}$
for some $p_0<p< 2$. By  Lemma~\ref{le3.2} estimate ${\rm (BR^{-1}_{r, s, m})}$
holds for all $p_0<r\leq p\leq p'\leq s<p'_0$.  Next by Corollary~\ref{coro3.6}
\begin{eqnarray*}
\|S_\lambda^{-1+ 2\varepsilon } (\sqrt[m]L)  \|_{ p\to
p'}\leq C \lambda^{ {n }({1\over p}-{1\over p'})}, \ \ \ \  \ \lambda>0
\end{eqnarray*}
for all $\varepsilon>0.$
By $TT^{\ast}$ argument,
\begin{eqnarray}\label{e3.35}
 \|S_\lambda^{-1/2+  \varepsilon} (\sqrt[m]L)   \|^2_{p\to 2}
  = \|S_\lambda^{-1+ 2\varepsilon} (\sqrt[m]L) \|_{p\to p'}\leq C \lambda^{ 2{n }({1\over p}-{1\over 2})}.
\end{eqnarray}
Now by Corollary~\ref{coro3.6}  for every $\varepsilon>0$,
\begin{eqnarray*}
\|S_\lambda^{\alpha+\varepsilon} (\sqrt[m]L) \|_{p\to q}
 \leq C \lambda^{ {n }({1\over p}-{1\over q})},
\end{eqnarray*}
where $q= \frac{2n}{n+1+2\alpha}$. This proves estimate ${\rm (BR^{\alpha+\varepsilon}_{r, s, m})}$
  for $(1/r, 1/s)=C(p)=\left({1\over p}, {n+1+2\alpha\over 2n}\right).$

Now point (2) follows from the above observation, straightforward $L^2$ estimates for
$\alpha=0$ (that is $\|S_\lambda^{0} (\sqrt[m]L)  \|_{ 2\to2}\leq 1$) ,  duality and Lemmas \ref{le3.2}
and \ref{le3.8}. The proof of point (1) is simple adjustment of the above argument based on
the fact that in virtue of Lemma \ref{lemma 3.6}  for any $p_0< p_1< p_2< 2$ condition  ${\rm (BR^{-1}_{p_2, p_2', m})}$
implies ${\rm (BR^{-1}_{p_1, p_1', m})}$.

 \medskip
Interpolation using Lemma \ref{le3.8} between \eqref{e3.35} and $L^2$ estimates
$\|S_\lambda^{0} (\sqrt[m]L)  \|_{ 2\to2}\leq 1$ yields
\begin{eqnarray}\label{e3.38}
\|S_{\lambda}^{\alpha+\varepsilon}(\sqrt[m]L)f\|_2
&\leq & C \lambda^{n({1\over r}-{1\over 2})}\|f\|_r
\end{eqnarray}
for   $ {1\over r}= {1\over 2}
 +{2 \alpha\over 2}-{2 \alpha\over  p }$ and $-{1\over 2}\leq \alpha\leq 0,$
which means that  ${\rm (BR^{ \alpha_\varepsilon}_{r, s, m})}$ holds for $(1/r, 1/s)=D(p)
=\left({1\over 2} +{2 \alpha\over 2}-{2 \alpha\over  p } , {1 \over 2 }\right).$

Now point (3) is a consequence of estimates  ${\rm (BR^{ \alpha+\varepsilon}_{r, s, m})}$
for $(1/r, 1/s)=D(p)$ and  $(1/r, 1/s)=C(p)$, the Riesz-Thorin theorem (or Lemma \ref{le3.8}), duality and Lemma~\ref{le3.2}.

  \smallskip

Since  ${\rm (BR^{ \alpha+\varepsilon}_{r, s, m})}$ holds  for  $(1/r, 1/s)=C(p)$
 and $\alpha =-1/2$, we can apply
the argument similar to the discussion described above and  assumption ${\rm (BR^{-1}_{p, p', m})}$ to obtain
 point (4). The proof of Theorem~\ref{th3.7} is end.
 \hfill{} $\Box$

\smallskip

\begin{remark}\label{re3.62} It follows from Lemma~\ref{le3.2} that if  the operator $L$ satisfies the
Gaussian upper bounds \eqref{GE}
and condition ${\rm (BR^{-1}_{p, p', m})}$  for some $1\leq p< 2$,
then  restriction $ p_0<r\leq s<p_0'$ can be removed from all points (1)-(4) and set $\Delta_{\alpha}(p, n)$
can be replaced by
\begin{eqnarray*}
\hspace{1cm}
{\tilde  \Delta}_{\alpha}(p, n)=\left\{ \left({1\over r}, {1\over s}\right)\in [0, 1]\times [0,1]:\ \
 \min\Big(\frac{1}{r}-\frac{1}{2},\
 \frac{1}{2}-\frac{1}{s}\Big)> -{2\alpha+1\over 2n}, \ \ \alpha- \frac{2\alpha}{p}< \frac{1}{r}-\frac{1}{s}
  \right\}.
 \end{eqnarray*}
\end{remark}

\begin{remark}
Note also that if we know that
$$\left\|S_{R^m}^{-(n+1)/2} (L)\right\|_{r\to s} \leq C
R^{{n}({1\over r}- {1\over s})},
$$
then by interpolation,  we can further extend the range of $r$ and $s$ in point (4) to obtain essentially  the same
optimal results as in the case of the standard Laplace operator.
\end{remark}

\bigskip

\bigskip

\section{Uniform Sobolev inequalities for elliptic operators with constant coefficients
}\label{sec4}
\setcounter{equation}{0}

In this section we will consider  $L^p \to L^q$ uniform boundedness of the
 resolvent of the higher order elliptic differential operators. Let $n\ge 2$ and  $P(\xi)$ be the real
 homogeneous elliptic polynomial of order
 $m \ge 2$ on $\R^ n$ satisfying the following non-degenerate condition:
\begin{eqnarray}\label{eq2.1}
{\rm det}\left( \frac{\partial^2 P(\xi)}{\partial\xi_i\partial\xi_j}\right)_{n\times n}\neq 0,\ \ \  \xi\neq 0,
\end{eqnarray}
which is equivalent to  the fact that  hypersurface
 \begin{eqnarray}\label{eq2.2}\Sigma=\{\ \xi\in \mathbb{R}^n;\ \ |P(\xi)|=1\ \},\end{eqnarray}
has nonzero Gaussian curvature everywhere, see \cite{BNW, St2}.  Without loss of generality,
we may assume  that $P(\xi)>0$ for all $\xi\neq 0$.

Throughout this section we always assume that  $H_0:=P(D)$, where $D=-i(\partial_1, \ldots,
\partial_n)$ and  $P(D)$ is the nonnegative self-adjoint operator associated with
the elliptic polynomial $P(\xi)$  on $L^2(\R ^n)$ and that  $P$ satisfies
conditions \eqref{eq2.1} and \eqref{eq2.2}.  

In our next results we will show  that the operator $\left(H_0-z\right)^{-\alpha}$ can be defined
for all values of $z\in C$ including $z \ge 0$ by  taking limits from upper or lower half-plane gives
different operators if $\alpha >0$, see \eqref{free measure} below.
 Hence it is convenient to introduce  notation $\mathbb{C}^\pm$.
 If $z$ is not a positive real, then this coincides with the standard complex numbers.
 For $z=\lambda>0$ we consider two possibilities $ \lambda +i0$ or $\lambda -i0$.
  The topology of $\mathbb{C}^\pm$ again coincides with topology of $\C$ except of
  set consisting of $ \lambda +i0$ or $\lambda -i0$  where the limit can be only taken from the
  corresponding upper and lower half-planes.

 The following statement is our main  result in this section.


\begin{theorem}\label{thm2.111}  Let $n\ge 2$, $m\ge 2$ and $z\in \mathbb{C}$. Consider
arbitrary auxiliary cutoff function $\psi$ such that
$\psi\in C_0^{\infty}({\mathbb R}),
 \psi(s)\equiv 1$ if $s\in [-2, 2]$ and  $ \psi$ is supported in the interval  $[-4, 4]$.
Assume that  $1/2\le\alpha<(n+1)/2$ for $n\ge3$ and $0<\alpha<3/2$ for $ n=2$.
Suppose  also that
 exponents $(p, q)$ satisfy the following conditions:
\begin{eqnarray}\label{eq2.4-fract}\min\Big(\frac{1}{p}-\frac{1}{2},\
\frac{1}{2}-\frac{1}{q}\Big)> {2\alpha-1\over 2n},
\end{eqnarray}
\begin{eqnarray}\label{eq2.5-fract}
\frac{2\alpha}{n+1}<\Big(\frac{1}{p}-\frac{1}{q}\Big).
\end{eqnarray}
Then there exists  positive constants $C_{p,q}$ independent of $|z|$ such that
\begin{eqnarray}\label{eq2.3-psi}
\|\left(H_0-z\right)^{-\alpha}\psi(H_0/|z|) \|_{p\to q}\le
C_{p,q}\ |z|^{\frac{n}{m}(\frac{1}{p}-\frac{1}{q})-\alpha},\quad
 \forall z \in C^{\pm}\setminus  \{0\}.
\end{eqnarray}
Moreover, for   the same range of $\alpha$  and exponents $(p,q)$  the corresponding
Bochner-Riesz means of order $-\alpha$,   $S^{-\alpha}_\lambda(H_0)=
\frac{1}{\Gamma(1-\alpha )}\left(1-\frac{H_0}{\lambda}\right)^{-\alpha}_+$ is well
 defined  for all $\lambda > 0$ and satisfies similar estimates
\begin{eqnarray}\label{eq2.3-riesz}
\|S^{-\alpha}_\lambda(H_0)  \|_{p\to q}\le C_{p,q}\ \lambda^{\frac{n}{m}(\frac{1}{p}-\frac{1}{q})-\alpha},
\ \ \lambda >  0.
\end{eqnarray}
Next assume  in addition that $m\alpha>n$ or that $\frac{1}{p}-\frac{1}{q}
\le \frac{m\alpha}{n}$, $p \neq 1$ and $q\neq \infty$ for  $m\alpha\le n$.

Then
\begin{eqnarray}\label{eq2.3-fract}
\|\left(H_0-z\right)^{-\alpha}\|_{p\to q}\le C_{p,q}\ |z|^{\frac{n}{m}(\frac{1}{p}-\frac{1}{q})-\alpha}
\end{eqnarray}
for all $z\in \mathbb{C}^{\pm}\setminus\{0\}$.
\end{theorem}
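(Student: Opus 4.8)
The plan is to reduce the operator $(H_0-z)^{-\alpha}$ to a dyadic sum of pieces of the form handled by \eqref{eq2.3-psi}, plus a low-frequency remainder, and to exploit homogeneity (scaling) of $P(D)$ to normalize $|z|\sim 1$. First I would use the homogeneity $P(tD)=t^mP(D)$: for $z\neq 0$ write $z=|z|w$ with $|w|=1$, and conjugate by the dilation $f\mapsto f(|z|^{1/m}\,\cdot)$ so that $(H_0-z)^{-\alpha}$ is unitarily equivalent (up to the scalar $|z|^{-\alpha}$ and the factor $|z|^{\frac nm(\frac1p-\frac1q)}$ produced by the change of variables in the $L^p\to L^q$ norm) to $(H_0-w)^{-\alpha}$ with $|w|=1$. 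Hence it suffices to prove a uniform bound $\|(H_0-w)^{-\alpha}\|_{p\to q}\le C_{p,q}$ for $|w|=1$; the claimed $|z|$-power in \eqref{eq2.3-fract} then follows automatically. (When $|w|$ ranges over the unit circle in $\mathbb C^\pm$, the estimates must be uniform up to $w=1\pm i0$, which is exactly where the limiting absorption built into \eqref{eq2.3-psi} is used.)

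Next I would split $(H_0-w)^{-\alpha}=(H_0-w)^{-\alpha}\psi(H_0)+(H_0-w)^{-\alpha}(1-\psi(H_0))$. For the first term, $\psi(H_0/|w|)$ with $|w|\sim1$ is comparable to $\psi(H_0)$, and \eqref{eq2.3-psi} (applied with $z=w$, so $|z|\sim1$) gives $\|(H_0-w)^{-\alpha}\psi(H_0)\|_{p\to q}\le C_{p,q}$ under exactly the hypotheses \eqref{eq2.4-fract}--\eqref{eq2.5-fract}, which we are assuming. For the high-frequency term, choose a Littlewood--Paley partition $1-\psi(\lambda)=\sum_{j\ge1}\phi(2^{-j}\lambda)$ with $\phi\in C_c^\infty$ supported in $\{1/2\le|\lambda|\le 2\}$, so that
\begin{eqnarray*}
(H_0-w)^{-\alpha}(1-\psi(H_0))=\sum_{j\ge1}(H_0-w)^{-\alpha}\phi(2^{-j}H_0).
\end{eqnarray*}
On the support of $\phi(2^{-j}\,\cdot)$ with $j\ge1$ we have $\lambda\ge 1/2$ while $|w|\sim1$, so $(\lambda-w)^{-\alpha}$ is a smooth, harmless symbol there and the operator $(H_0-w)^{-\alpha}\phi(2^{-j}H_0)$ is of the same type as a standard spectral multiplier localized at frequency $2^{j/m}$. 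Rescaling the $j$-th piece by the dilation $\lambda\mapsto 2^{-j}\lambda$ (again using homogeneity of $P$), one reduces it to an $O(1)$-smooth multiplier supported in a fixed annulus applied to $H_0$, for which Proposition~\ref{prop2.2}(i) — or, since $(H_0-z)^{-\alpha}$ with $|z|\sim1$ already satisfies \eqref{eq2.3-psi} — gives $\|(H_0-w)^{-\alpha}\phi(2^{-j}H_0)\|_{p\to q}\le C\,2^{-j(\alpha - \frac nm(\frac1p-\frac1q))}$ with the same $C$ uniform in $w$; here the gain $2^{-j\alpha}$ comes from the decay of $(\lambda-w)^{-\alpha}\sim\lambda^{-\alpha}$ at frequency $\lambda\sim 2^j$, and the loss $2^{j\frac nm(\frac1p-\frac1q)}$ from the $L^p\to L^q$ Bernstein-type cost of a single dyadic block. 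Summing over $j\ge1$ converges precisely when $\alpha>\frac nm(\frac1p-\frac1q)$, i.e. $\frac1p-\frac1q<\frac{m\alpha}{n}$; if instead $m\alpha>n$ the series converges for every admissible $(p,q)$ since then $\frac1p-\frac1q\le1<\frac{m\alpha}{n}$. This matches the stated hypotheses. The endpoint restrictions $p\neq1$, $q\neq\infty$ when $m\alpha\le n$ are needed because, on the critical line $\frac1p-\frac1q=\frac{m\alpha}{n}$ the dyadic sum is only logarithmically divergent, and one recovers boundedness by the usual trick of interpolating between two nearby off-critical pairs — which requires room on both sides, hence the open endpoint condition.

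The main obstacle I anticipate is the high-frequency summation on or near the Sobolev line $\frac1p-\frac1q=\frac{m\alpha}{n}$: the naive triangle-inequality bound just fails to sum, and one must argue more carefully. The standard remedy is to not estimate each $L^p\to L^q$ block separately but to interpolate: fix $(p,q)$ on the line, pick $(p_0,q_0)$ and $(p_1,q_1)$ with slightly larger and slightly smaller values of $\frac1p-\frac1q$ (still satisfying \eqref{eq2.4-fract}--\eqref{eq2.5-fract}), for which the dyadic series genuinely converges; the two endpoint bounds hold by the argument above, and real (or Stein complex) interpolation of the analytic family $(H_0-w)^{-\alpha}$ — note $\alpha$ can be complexified exactly as in Lemma~\ref{le3.8} — recovers the result at $(p,q)$, which is why $p=1$ and $q=\infty$ must be excluded in that regime. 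A secondary technical point is ensuring all constants remain uniform as $w\to1\pm i0$; this is guaranteed because \eqref{eq2.3-psi} is already stated uniformly over $z\in\mathbb C^\pm\setminus\{0\}$, and the high-frequency blocks involve only the regular part of the symbol, away from the singularity $\lambda=w$. Assembling the low-frequency bound from \eqref{eq2.3-psi}, the summed high-frequency bound, and undoing the initial scaling yields \eqref{eq2.3-fract}.
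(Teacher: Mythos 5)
Your argument, as written, only derives the last estimate \eqref{eq2.3-fract} \emph{conditionally} on the first estimate \eqref{eq2.3-psi}: you split $(H_0-w)^{-\alpha}=(H_0-w)^{-\alpha}\psi(H_0)+(H_0-w)^{-\alpha}(1-\psi(H_0))$ and then invoke \eqref{eq2.3-psi} for the first piece. But \eqref{eq2.3-psi} is itself one of the three assertions of the theorem---in fact it is the core assertion, and it is not proved anywhere in your proposal. The substance of the paper's proof is the analysis of the kernel $K_1=\mathcal{F}^{-1}\bigl(\psi(P^{1/m}(\xi))(P(\xi)-(1+i\varepsilon)^m)^{-\alpha}\bigr)$: after passing to polar coordinates adapted to the level set $\Sigma=\{P=1\}$ and applying the stationary phase expansion of $\int_\Sigma e^{iy\omega}\,d\omega/|\nabla P(\omega)|$, one gets an oscillatory kernel $K''(x)\sim |x|^{-(n+1)/2+\alpha}a_\pm(x)e^{\pm i\phi_\pm(x)}$ whose $L^p\to L^q$ bounds are obtained by a Carleson--Sj\"olin argument, crucially exploiting the nondegenerate curvature of $\Sigma$. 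None of this appears in your write-up; without it you have no way of getting the non-trivial exponent range \eqref{eq2.4-fract}--\eqref{eq2.5-fract}, which is strictly better than what Young's inequality or symbol calculus gives near the singular surface $\lambda=1$. You also do not address \eqref{eq2.3-riesz}; in the paper this follows from \eqref{eq2.3-psi} via the distributional identity $e^{i\pi\alpha}(x-i0)^{-\alpha}-e^{-i\pi\alpha}(x+i0)^{-\alpha}=2\pi i\,\chi_+^{-\alpha}/\Gamma(\alpha)$, a step you would need to supply.

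For the part you do prove (namely \eqref{eq2.3-fract} given \eqref{eq2.3-psi}), your dyadic Littlewood--Paley decomposition of the high-frequency remainder is a legitimate alternative to the paper's route. The paper simply observes that $(P(\xi)-(1+i\varepsilon)^m)^{-\alpha}(1-\psi(P^{1/m}(\xi)))$ is a classical symbol of order $-m\alpha$, reads off $|K_2(x)|\le C_N|x|^{m\alpha-n-N}$, and applies Young's inequality; this gives the full interior range at once, needs no dyadic summation, and has no log-divergence to repair at the Sobolev line, so it avoids the interpolation patch you sketch. Your version reaches the same conclusion but with more moving parts (scaling of each block, Bernstein estimate, critical-line interpolation), each of which you state rather than verify. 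So: acceptable in outline for \eqref{eq2.3-fract} assuming \eqref{eq2.3-psi}, but the hard estimate \eqref{eq2.3-psi} and the Bochner--Riesz corollary \eqref{eq2.3-riesz} are both missing, and those are where the actual difficulty---and the use of the curvature hypothesis---lives.
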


\begin{proof} We only discuss the case  $ m\alpha<n$. The other cases are similar or simpler.
We begin our discussion with verifying  estimates  \eqref{eq2.3-psi}
and \eqref{eq2.3-fract} and postpone considering the operator $S^{\alpha}_\lambda (H_0)$ to
the end of proof. Set $z=re^{i\theta}$ with
$r>0$. If $\delta<|\theta|\le\pi$ for some $\delta>0$,
  then the operator $(H_0-e^{i\theta})^{-\alpha}$ is  a standard constant coefficient pseudo-differential
  operator of order $-\alpha m$ with a symbol $(P(\xi)-e^{i\theta})^{-\alpha}$.
  Hence resolvent estimate  \eqref{eq2.3-fract}
  follows from the standard Sobolev estimates and a scaling argument in $r$.
 A similar argument shows that for any $p \le q$ the multiplier $(H_0-e^{i\theta})^{-\alpha}\psi(H_0)$
 is bounded as as operator from $L^p$ to $L^q$.
Thus we can assume that   $0<|\theta|\le \delta$ and by symmetry it is enough  to consider only the case $\Im z>0$.

We  write $z=(\lambda+i\lambda\varepsilon)^m$ for $\lambda>0$
 and $0<\varepsilon<<1$. Since $|z|\sim\lambda^m$, by homogeneity,
 it suffices to estimate $(H_0-(1+i\varepsilon))^{-\alpha
 }$     and $(H_0-(1+i\varepsilon))^{-\alpha
 }\psi(H_0)$
 for  $0<\varepsilon<<1$.
Let $K^\varepsilon$ be the convolution kernel of $(H_0-(1+i\varepsilon))^{-\alpha }$.  By the inverse  Fourier transform
$$K^\varepsilon=\mathcal{F}^{-1}\Big\{\big(P(\xi)-(1+i\varepsilon)^m\big)^{-\alpha}\Big\}.$$
Note that $K^\varepsilon=K_1+K_2$, where
$$K_1=\mathcal{F}^{-1}\Big( \frac{\psi(P^{1/m}(\xi))}{(P(\xi)-(1+i\varepsilon)^m)^\alpha}\Big)$$
and
$$K_2=\mathcal{F}^{-1}\Big( \frac{1-\psi(P^{1/m}(\xi))}{(P(\xi)-(1+i\varepsilon)^m)^\alpha}\Big).$$
It is clear that to show \eqref{eq2.3-psi}
and \eqref{eq2.3-fract} it is enough to verify  that $K_1$ satisfies \eqref{eq2.3-psi},
whereas  \eqref{eq2.3-fract}  holds for $K_2$.

{\it Estimate \eqref{eq2.3-fract} for  $K_2$.}  To estimate  $K_2$ we note for any $\alpha >0$  it is symbol of order
$-m\alpha$ that is
 $$\Big|D^\beta\Big( \frac{1-\psi(P^{1/m}(\xi))}{(P(\xi)-(1+i\varepsilon)^m)^\alpha}\Big)\Big|\le
 C_\alpha(1+|\xi|)^{-m\alpha-|\beta|}.$$
 Hence $|K_2(x)|\le C_N\ |x|^{m\alpha-n-N}$ for any $N\in \mathbb{N}$ and
 by Young's inequality and interpolation
 \begin{equation}\label{eq2.7-fract}\|K_2*f\|_{q}\le C_{p,q}\|f\|_p
  \end{equation}
  for all $(p,q)$ satisfying $0\le\frac{1}{p}-\frac{1}{q}\le {m\alpha\over n}$ and
  $(p,q)\ne ({n\over m\alpha},\infty), (1,{n\over n-m\alpha})$.

{\it Estimate \eqref{eq2.3-psi} for  $K_1$.} To estimate  $K_1$ we  use the stationary phase principle. We write
  \begin{equation}\label{eq2.8-fract}K_1(x)=\int _{\R ^n}\frac{e^{ix\xi}\
  \widetilde{\psi}(P^{1/m}(\xi))}{(P^{1/m}(\xi)-1-i\varepsilon)^\alpha}\ d\xi=
  \int_0^\infty\frac{s^{n-1}\widetilde{\psi}(s)}{(s-1-i\varepsilon)^\alpha}\
  \Big(\int_\Sigma\frac{e^{isx\omega}d\omega}{|\nabla P(\omega)|}\Big) ds,\end{equation}
where $\widetilde{\psi}(s)=\psi(s)(s^{m-1}+s^{m-2}(1+i\varepsilon)+\ldots+(1+i\varepsilon)^{m-1})^{-\alpha}$.


Note that $K_1$ is the Fourier transform of compactly supported distribution including taking
limits with $\varepsilon $ goes to $\pm0$ so
$|K_1(x)|\le C$  for  all $|x|\le 1$.  To handle the remaining
case $|x|>1$, we recall the following stationary phase formula for the Fourier
transform of a smooth measure
on hypersurface~$\Sigma$
\begin{equation}\label{surface-fract}\int_\Sigma\frac{e^{iy\omega}d\omega}{|\nabla P(\omega)|}
=|y|^{-\frac{n-1}2}c_+(y)e^{i\phi_+(y)}+|y|^{-\frac{n-1}2}c_-(y)e^{-i\phi_-(y)},
\end{equation}
where for say $|y|\ge 1/4$, the coefficients satisfy
\begin{equation}\label{surface estmates-fract}
\Bigl|\frac{\partial^\beta}{\partial y^\beta}c_+(y)\Bigr|
+\Bigl|\frac{\partial^\beta}{\partial y^\beta}c_-(y)\Bigr|
\le C_\alpha|y|^{-|\beta|}, \quad \beta\in \mathbb{N}_0.
\end{equation}
and $\phi_\pm(y)=\langle y,\omega_\pm(y)\rangle$ are smooth homogeneous  function
of degree one. Here  $\omega_\pm(y)$ are the two points of $\Sigma$ such that $\pm\frac{ y}{|y|}$
are the positive normal direction of $\Sigma$ at these points.
Thus by (\ref{eq2.8-fract}) and (\ref{surface-fract})
 \begin{eqnarray}\label{eq2.11-fract}
 K_1(x)&=&\sum_{\pm}\int_0^\infty\frac{s^{n-1}\widetilde{ \psi}(s)}{(s-1-i\varepsilon)^{\alpha}}\
 \Big(|sx|^{-\frac{n-1}2}c_\pm(sx)e^{\pm is\phi_\pm(x)}\Big) ds \nonumber\\
& =& \sum_{\pm}|x|^{-\frac{n-1}{2}}b_\varepsilon^\pm (x) e^{\pm i\phi_\pm(x)},\ \ |x|>1/4,
 \end{eqnarray}
where
$$ b_\varepsilon^\pm(x)=\int_{-\infty}^\infty\frac{(s+1)^{\frac{n-1}{2}}
\widetilde{ \psi}(s+1)}{(s-i\varepsilon)^\alpha} c_\pm((s+1)x)e^{\pm is\phi_\pm(x)} ds.$$
Note that  the  function $s\mapsto (s+1)^{\frac{n-1}{2}}\widetilde{ \psi}(s+1)c_\pm((s+1)x)$ is smooth and compactly supported
 so it is easy to check that
$$|\partial^\beta b_\varepsilon^\pm(x)|\le C_\beta |x|^{\alpha+|\beta|-1}, \ \ |x|>1/4
$$
 uniformly in $\varepsilon>0$.

   Hence in view of (\ref{eq2.11-fract}), we can further smoothly decompose $K_1(x)=K'(x)+K''(x)$
  in  such a way that supp $K'\subset B(0, 1)$ (the unit ball of $\R^n$),  $|K'(x) |\le C$ for all $x$
   and  $K''$  can  be expressed as
   $$K''(x)=\sum_{\pm}|x|^{-\frac{n+1}{2}+\alpha}a_\pm(x) e^{\pm i\phi_\pm(x)},
   $$
where $a_\pm \in C^\infty(\R ^n)$ satisfy $a_{\pm}(x)=0$ for $|x|\le 1/2$ and
$|\partial^\beta a_\pm(x)|\le C_\beta |x|^{-|\beta|}$ for any $\beta\in \mathbb{N}_0$.
By Young's inequality
\begin{equation}\label{eq2.12-fract}
\|K'*f\|_{q}\le C\|f\|_p
\end{equation}
for all $1\le p \le q \le \infty$.

To estimate  $K''$, we note that by the assumption $\alpha<(n+1)/2$  and $|K''(x)|\le (1+|x|)^{-(n+1-2\alpha)/2}$.
Hence
\begin{equation}\label{eq2.13-fract}
\|K''*f\|_{q}\le C\|f\|_p
\end{equation}
for all $(p,q)$ satisfying that $\frac{n-1+2\alpha}{2n}\le\frac{1}{p}-\frac{1}{q}\le1$
but $(p,q)\ne (1, \frac{2n}{n+1-2\alpha}), (\frac{2n}{n-1+2\alpha}, \infty)$. However this argument does not
give the whole range
of pairs $(p,q)$ for which  \eqref{eq2.13-fract} holds. It is possible to  extend it by making use of the
oscillatory factor $e^{\pm i\phi_\pm(x-y)}$ in the integral operator
$$K''*f(x)=\sum_{\pm}\int_{\R^ n}|x-y|^{-\frac{n+1}{2}+\alpha}a_\pm(x-y) e^{\pm i\phi_\pm(x-y)}f(y)dy.$$
In fact, under the assumption that $\Sigma$ has nonzero Gaussian curvature everywhere,
the phase function $\phi_\pm(x-y)$ satisfies the so-called $n\times n$-Carleson-Sj\"olin
conditions, see \cite[p.69]{So} or \cite[p.392]{St2}. Hence the celebrated  Carleson-Sj\"olin
argument  can be used to estimate $K''*f$.

Let $\beta(s)\in C^\infty_c(\R )$ be a such function that supp~$\beta\in [\frac{1}{2},2]$ and
$\sum_{0}^\infty\beta(2^{-\ell}s)=1$ for $s\ge {1/2}$. Set $K''_\ell(x)=
\beta(2^{-\ell}|x|)K''(x)$ for all $\ell=0,1,2,\ldots$ so
 $$
 K''*f(x)=\sum_{\ell=0}^\infty (K_\ell''*f)(x),
 $$
where
$$K_\ell''*f(x):=\int_{\R^ n}|x-y|^{-\frac{n+1}{2}+\alpha}\beta(2^{-\ell}|x-y|)a_\pm((x-y)) e^{\pm i\phi_\pm(x-y)}f(y)dy.
$$
Put $\lambda=2^{\ell}$. By homogeneity
$$(K_\ell''*f)(\lambda x)=\lambda^{\frac{n-1+2\alpha}{2}}\int_{\R ^n} w(x-y)e^{\pm \lambda i\phi_\pm(x-y)}f(\lambda y)dy,$$
where
$
w(x)=|x|^{-\frac{n-1+2\alpha}{2}}\beta(|x|)a_\pm(\lambda x))\in C_c^\infty(\R^n \setminus 0)$
satisfying  $|\partial^\alpha w(x)|\le C_\alpha$ for any $\alpha$.  Now we can apply  Carleson-S\"ojlin argument,
see \cite[p.69]{So} or  \cite[p.392]{St2}, to conclude that
$$ \|K_\ell''*f\|_q\le C \lambda^{-n/p+(n-1+2\alpha)/2}\|f\|_p,\ \ \lambda=2^\ell, \ \ell=0,1,\ldots,$$
\begin{equation}\label{eq2.14-fract}
\|K''*f\|_{q}\le C\|f\|_p,
\end{equation}
where  $q=\frac{n+1}{n-1}p'$, $1\le p<2n/(n-1+2\alpha)$ for all $\alpha\ge 1/2$ if $n\ge3$ and $\alpha>0$ if $n=2$.
 By interpolation between
 \eqref{eq2.13-fract} and \eqref{eq2.14-fract} \begin{equation}\label{eq2.15-fract}
\|K''*f\|_{q}\le C\|f\|_p
\end{equation}
 for all $(p,q)$ such that $\frac{2\alpha}{n+1}< \frac{1}{p}-\frac{1}{q}\le1$ and
 $$\min\Big(\frac{1}{p}-\frac{1}{2},\ \frac{1}{2}-\frac{1}{q}\Big)> {2\alpha-1\over 2n}.$$
 Therefore \eqref{eq2.7-fract}, (\ref{eq2.12-fract}) together with (\ref{eq2.15-fract}) yield  estimate
 (\ref{eq2.3-fract}). 


Next we consider Bochner-Riesz mean operator  $S^{-\alpha}_\lambda(H_0)=
 \frac{1}{\Gamma(1-\alpha )}\left(1-\frac{H_0}{\lambda}\right)^{\alpha}_+$. By homogeneity we can set $\lambda=1$.
Note also  that
\begin{equation*}
(x\pm i0)^\alpha = x_+^\alpha + e^{\pm i\pi \alpha}x_-^\alpha,
\end{equation*}
so
\begin{equation}\label{e3.a}
 e^{ i\pi \alpha}(x- i0)^{-\alpha} - e^{- i\pi \alpha}(x+i0)^{-\alpha}  = 2i\sin(\pi \alpha) x_+^{-\alpha}
 =2i\sin(\pi \alpha)\Gamma(1-\alpha) \chi_+^{-\alpha}=2i\pi\frac{\chi_+^{-\alpha}}{\Gamma(\alpha)}.
\end{equation}
Employing  analytic continuation shows that \eqref{e3.a} is valid for all $\alpha \in \C$,
see also \cite[(3.2.11)]{Ho1}. By \eqref{e3.a}
\begin{eqnarray*}
2i\pi\frac{S^{-\alpha}_1(H_0)}{\Gamma(\alpha)}
= 2i\pi\frac{\chi_+^{-\alpha}(1-H_0)}{\Gamma(\alpha)}
= e^{ i\pi \alpha}\left(H_0-1-i0\right)^{-\alpha} - e^{- i\pi \alpha}
\left(H_0-1+i0\right)^{-\alpha}\\=e^{ i\pi \alpha}\left(H_0-1-i0\right)^{-\alpha}\psi(H_0) - e^{- i\pi \alpha}
\left(H_0-1+i0\right)^{-\alpha}\psi(H_0).
\end{eqnarray*}
Hence we obtain estimate \eqref{eq2.3-riesz} as a direct consequence of \eqref{eq2.3-psi}.
This ends the proof of Theorem~\ref{thm2.111}.
\end{proof}

\medskip

For $\alpha=1$ the formula \eqref{e3.a}
simplifies to following relation
$$
\frac{1}{2\pi i}(\lambda+i0)^{-1}-(\lambda-i0)^{-1}=\delta_0,
$$
see \cite[Example 3.1.13]{Ho1}. Then the  above relation in turn  implies
the well-known absorption principle which connects the spectral projections  $dE_{H_0}(\lambda)$
and the resolvent $ R_0(z)=(H_0-z)^{-1}$
\begin{eqnarray}\label{free measure}
dE_{H_0}(\lambda)f=\frac{1}{2\pi i}(R_0(\lambda+i0)-R_0(\lambda-i0))f.
\end{eqnarray}

We will use the case $\alpha=1$ of Theorem \ref{thm2.111}
and \eqref{free measure} to investigate  the spectral resolution of Schr\"odinger type operators $H=P(D)+V$
with integrable potentials $V$, see Section \ref{sec5} below. Therefore  we summarise
 this particular case of Theorem \ref{thm2.111}
in the following corollary.

\medskip
\begin{coro}\label{boundary operator}
Suppose  that  $n\ge 2$, $m\ge 2$ and the operator $H_0$ satisfies the assumptions of
Theorem~\ref{thm2.111}. Then
\begin{eqnarray}\label{eq2.16B}
\|dE_{H_0}(\lambda)\|_{p\to q}\le C\ \lambda^{\frac{n}{m}(\frac{1}{p}-\frac{1}{q})-1}, \ \ \lambda>0
\end{eqnarray}
for all exponents $(p, q)$ such that
\begin{eqnarray*}
\min\Big(\frac{1}{p}-\frac{1}{2},\
\frac{1}{2}-\frac{1}{q}\Big)> {1\over 2n} \quad \mbox{\rm and} \quad \frac{2}{n+1}<\Big(\frac{1}{p}-\frac{1}{q}\Big).
\end{eqnarray*}
Next assume  in addition that $m>n$ or that $\frac{1}{p}-\frac{1}{q}  \le \frac{m}{n}$,
$p \neq 1$ and $q\neq \infty$ for  $m\le n$.

Then
\begin{eqnarray}\label{eq2.16A}
\|R_0(z)\|_{p\to q}\le C\ |z|^{\frac{n}{m}(\frac{1}{p}-\frac{1}{q})-1}
\end{eqnarray}
for all $z\in \mathbb{C}^\pm\setminus\{0\}$.
\end{coro}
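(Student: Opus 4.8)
The plan is to deduce both estimates from Theorem~\ref{thm2.111} applied with the single value $\alpha=1$, which is admissible for every $n\ge 2$: indeed $1/2\le 1<(n+1)/2$ when $n\ge 3$, and $0<1<3/2$ when $n=2$. With $\alpha=1$ the hypotheses \eqref{eq2.4-fract} and \eqref{eq2.5-fract} on the pair $(p,q)$ reduce exactly to $\min\big(\frac{1}{p}-\frac{1}{2},\ \frac{1}{2}-\frac{1}{q}\big)>\frac{1}{2n}$ and $\frac{2}{n+1}<\frac{1}{p}-\frac{1}{q}$, while the extra hypothesis of Theorem~\ref{thm2.111} ($m\alpha>n$, or $\frac{1}{p}-\frac{1}{q}\le\frac{m\alpha}{n}$ with $p\neq1$, $q\neq\infty$ when $m\alpha\le n$) becomes precisely the additional condition listed in the Corollary for the resolvent bound. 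So \eqref{eq2.16A} and \eqref{eq2.16B} will come out of \eqref{eq2.3-fract} and \eqref{eq2.3-psi}, respectively.

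For \eqref{eq2.16A} one only has to recall that $R_0(z)=(H_0-z)^{-1}$ by definition, so that \eqref{eq2.3-fract} with $\alpha=1$ is literally the asserted bound $\|R_0(z)\|_{p\to q}\le C|z|^{\frac{n}{m}(\frac{1}{p}-\frac{1}{q})-1}$; the boundary values $z=\lambda\pm i0$, $\lambda>0$, are included since Theorem~\ref{thm2.111} is stated for all $z\in\mathbb{C}^{\pm}\setminus\{0\}$. Nothing further is needed here.

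For \eqref{eq2.16B} the point is to avoid the extra Sobolev-type restriction, which is possible because the spectral measure is already frequency-localised. Concretely, since the spectral function of $\psi(H_0/\lambda)$ takes the value $\psi(1)=1$ at the spectral point $\mu=\lambda$, one has $\psi(H_0/\lambda)\,dE_{H_0}(\lambda)=dE_{H_0}(\lambda)$; combining this with the limiting absorption formula \eqref{free measure} and the fact that $\psi(H_0/\lambda)$ commutes with $R_0(\lambda\pm i0)$ gives
\[
dE_{H_0}(\lambda)f=\frac{1}{2\pi i}\Big(R_0(\lambda+i0)\psi(H_0/\lambda)-R_0(\lambda-i0)\psi(H_0/\lambda)\Big)f.
\]
Applying \eqref{eq2.3-psi} with $\alpha=1$ and $z=\lambda\pm i0$ (so $|z|=\lambda$) bounds each of the two terms on the right by $C\lambda^{\frac{n}{m}(\frac{1}{p}-\frac{1}{q})-1}$, and the triangle inequality yields \eqref{eq2.16B} on the full range \eqref{eq2.4-fract}--\eqref{eq2.5-fract} with $\alpha=1$, that is, without the Sobolev embedding constraint.

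Since Theorem~\ref{thm2.111} has already been established, there is no real obstacle remaining; the only points requiring care are the bookkeeping of the $\lambda\pm i0$ boundary values in the $\mathbb{C}^{\pm}$ formalism and the elementary spectral identity $\psi(H_0/\lambda)\,dE_{H_0}(\lambda)=dE_{H_0}(\lambda)$, which is exactly what allows the weaker hypothesis to suffice for the spectral-measure estimate \eqref{eq2.16B} while the genuine resolvent estimate \eqref{eq2.16A} still needs the extra condition.
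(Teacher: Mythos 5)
Your proposal is correct and matches the paper's approach: the paper presents this corollary as a restatement of the $\alpha=1$ case of Theorem~\ref{thm2.111} combined with the limiting absorption formula~\eqref{free measure}, with \eqref{eq2.3-fract} yielding \eqref{eq2.16A} (hence the extra Sobolev condition) and the frequency-localised estimate~\eqref{eq2.3-psi} yielding \eqref{eq2.16B} (hence no such condition). Your identity $\psi(H_0/\lambda)\,dE_{H_0}(\lambda)=dE_{H_0}(\lambda)$ is precisely the spectral-localisation observation, left implicit in the paper, that lets one avoid the Sobolev constraint for the spectral-measure bound.
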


Note that for any natural number $k\in \N$ the operator  $H_0=(-\Delta)^k$
satisfies assumptions of Theorem~\ref{thm2.111} so all the estimates of the statement hold for
poly-harmonic operators. In the case  $\frac{1}{p}-\frac{1}{q}=\frac{2}{n}$ and resolvent
$(z+\Delta)^{-1}$, which corresponds to
the operator    $H_0=-\Delta$, $\alpha=1$ and $n\ge 3$, estimate  \eqref{eq2.16A} from
Corollary~\ref{boundary operator} was obtained by  Kenig, Ruiz and Sogge  in \cite{KRS}.
(In fact, they were able to prove such a uniform
estimate  for a larger class of operators, where the standard Laplace operator $\Delta$
was replaced by a homogeneous  second order constant coefficient
differential operator, non-degenerate but not necessarily elliptic).
In the setting of the Laplace operator on asymptotically conic non-trapping manifolds
estimates \eqref{eq2.16A} were obtained by Guillarmou and  Hassell in \cite{GH}.
For other results of this type see also \cite{Gu} and the references within.

The classical Bochner-Riesz means operators
  $S^{-\alpha}_1(-\Delta) $ with a negative index $-\alpha$ corresponding to the standard Laplace operator  have
   been studied by many authors, see for example \cite{Bak, Bor, CS, Gu} and references therein.

\bigskip

%
%
%
%
%

\section{Restriction type estimates  for Schr\"odinger operators $P(D)+V$ }\label{sec5}
\setcounter{equation}{0}

In this section we will establish $L^p \to L^q$ estimates for the  perturbed resolvent
$R_H(z)=(z-H)^{-1}$ for any ${z\neq0}$, where  $H:=H_0+V=P(D)+V$ is a self-adjoint operator
with the real valued potential $V$.
For simplicity we assume that  $V\ge0$ belong to $L^1_{\rm loc}(\R^n)$.
Then it is well-known that the operator $H$ can be defined as a self-adjoint extension by
the following non-negative closed  form
\begin{eqnarray}
 \label{eq3-01}
 Q_V(f):=\int_{\mathbb{R}^n} P(\xi)\ |\widehat{f}(\xi)|^2 d\xi+\int_{\mathbb{R}^n} V|f|^2 dx
 \end{eqnarray}
for all $f\in W^{m,2}(\mathbb{R}^{n})$ such that  $\int V|f|^2dx<\infty$.

 In order to obtain the estimates for the  resolvent $R_H(z)=(z-H)^{-1}$, a crucial step will
be to pass from \eqref{eq2.16A} to a similar estimate for $H$ by writing the
standard perturbation formula:
 \begin{eqnarray}\label{eq3-01a}
 R_H(z)f=R_0(z)(I+VR_0(z))^{-1}f,\ \  {\rm Im}z\neq0.
 \end{eqnarray}
 In the next step we will study the boundary
  resolvent $R_H(\lambda\pm i0)$ and by Stone's  formula
\begin{eqnarray}\label{eq3-01b}
dE_{H}(\lambda)f=\frac{1}{2\pi i}(R_H(\lambda+i0)-R_H(\lambda-i0))f
\end{eqnarray}
deduce  restriction type estimates for the spectral projection measure $dE_{H}(\lambda)$. Using the notation
$\C^{\pm}$ introduced at the beginning of section \ref{sec4} for   $\lambda>0$ and
 $z=\lambda\in \mathbb{C}^\pm$ we always assume that  $R_H(z)=R_H(\lambda\pm i0)$.
We first verify
the following lemma.

 \begin{lemma}\label{pro3.1}
 Suppose   $n\ge 2$, $m\ge 2$  and  that  $H_0$ satisfies assumptions  of Theorem~\ref{thm2.111}.
Assume also that exponents  $(p,q)$ satisfy all conditions listed in
 Corolary \ref{boundary operator}, $\frac{1}{r}=\frac{1}{p}-\frac{1}{q}$  and that $0\le V\in L^r(\R^n) $. Then
 \begin{eqnarray}
 \label{eq3-02}
 \|VR_0(z)\|_{p\to p}\le C\|V\|_r\ |z|^{\frac{n}{m}(\frac{1}{p}-\frac{1}{q})-1},
 \ \  \forall z\in \mathbb{C}^\pm\setminus\{0\}.
  \end{eqnarray}
  \end{lemma}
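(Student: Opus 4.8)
The plan is to recognise $VR_0(z)$ as a composition of two elementary mapping properties and to estimate each factor separately. First I would write $VR_0(z) = M_V \circ R_0(z)$, where $R_0(z)$ is viewed as an operator $L^p(\R^n)\to L^q(\R^n)$ and $M_V$ denotes multiplication by $V$, viewed as an operator $L^q(\R^n)\to L^p(\R^n)$. For the first factor, since the pair $(p,q)$ satisfies all the conditions listed in Corollary~\ref{boundary operator}, estimate \eqref{eq2.16A} applies and gives a constant $C$ independent of $|z|$ with
\[
\|R_0(z)\|_{p\to q}\le C\,|z|^{\frac{n}{m}(\frac{1}{p}-\frac{1}{q})-1},\qquad z\in\mathbb{C}^\pm\setminus\{0\}.
\]
For the second factor, note that $\tfrac1r=\tfrac1p-\tfrac1q\in(0,1]$, so $1\le r<\infty$ and $\tfrac1p=\tfrac1q+\tfrac1r$; hence H\"older's inequality yields $\|Vg\|_p\le\|V\|_r\,\|g\|_q$ for every $g\in L^q(\R^n)$, i.e. $M_V$ is bounded from $L^q$ to $L^p$ with norm at most $\|V\|_r$ (here we use $0\le V\in L^r(\R^n)$ so that $M_V$ is well defined).

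Composing the two bounds, for any $f\in L^p(\R^n)$ one obtains
\[
\|VR_0(z)f\|_p\le\|V\|_r\,\|R_0(z)f\|_q\le C\,\|V\|_r\,|z|^{\frac{n}{m}(\frac{1}{p}-\frac{1}{q})-1}\,\|f\|_p,
\]
which is exactly \eqref{eq3-02}. The argument is carried out simultaneously for $z\in\mathbb{C}^\pm\setminus\{0\}$, in particular for the boundary values $z=\lambda\pm i0$ with $\lambda>0$, since Corollary~\ref{boundary operator} already supplies the limiting resolvent bound there with a constant uniform in $|z|$.

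There is essentially no hard step here; the estimate is a direct consequence of Corollary~\ref{boundary operator} together with H\"older's inequality. The only points deserving a word of care are to confirm that the hypothesis ``$(p,q)$ satisfies all conditions listed in Corollary~\ref{boundary operator}'' is precisely what is required to invoke \eqref{eq2.16A} (so that the exhibited power of $|z|$ is genuinely uniform), and to check the exponent arithmetic $\tfrac1p=\tfrac1q+\tfrac1r$ that makes the H\"older step admissible with $r\in[1,\infty)$.
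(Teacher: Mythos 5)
Your argument is correct and coincides with the paper's own proof: both decompose $VR_0(z) = M_V R_0(z)$, bound $M_V\colon L^q\to L^p$ by $\|V\|_r$ via H\"older, and apply Corollary~\ref{boundary operator} for $\|R_0(z)\|_{p\to q}$. No substantive differences.
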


  \begin{proof}
  Let $M_V$ be the multiplication operator defined by $M_Vf=V(x)f(x)$. Then by
   H\"older's inequality $\|M_V\|_{q \to p}\le \|V\|_r$ and  by
  Corollary \ref{boundary operator}
  $$\|VR_0(z)\|_{p\to p}\le \|M_V\|_{q\to p} \|R_0(z)\|_{p\to q}\le
  C\|V\|_r \ |z|^{\frac{n}{m}(\frac{1}{p}-\frac{1}{q})-1},
   \ \ z\in \mathbb{C}^\pm\setminus\{0\}.$$
  \end{proof}

Assume now that the exponent $p$
satisfies the relation $ \max\Big(\frac{2n}{n+m},1\Big)<p <\frac{2(n+1)}{n+3}$.
Note that then the pair $(p,p')$ satisfies  all conditions from Corollary \ref{boundary operator}.
This yields  the following corollary.

   \begin{coro}\label{coro 3.2}
   Suppose again that $n\ge 2$, $m\ge 2$, $H_0$ satisfies assumptions  of
   Theorem~\ref{thm2.111} and that  $0\le V\in L^{\frac{n+1}{2}}(\R^n)\cap L^{s}(\R^n) $
   where  $s=\max\Big(\frac{n}{m},1\Big)$.
Then there exists a constant $C>0$ 
such that 
  \begin{eqnarray}
 \label{eq3-02a}
 \|VR_0(z)\|_{p\to p}\le C\ |z|^{\frac{n}{m}(\frac{1}{p}-\frac{1}{p'})-1}, \ \ \forall z\in \mathbb{C}^\pm\setminus\{0\}
  \end{eqnarray}
 for all
$\max\Big( \frac{2n}{n+m}, 1\Big)<p <\frac{2(n+1)}{n+3}.$

 In particular, there exists a constant $\delta>0$ such that
 the operator $I+VR_0(z)$ is invertible on $L^p(\mathbb{R}^{n})$ and
 \begin{eqnarray}
 \label{eq3-02b}
 \sup_{|z|>\delta}\|(I+VR_0(z))^{-1}\|_{p\to p}\le C.
  \end{eqnarray}
  for all  $z\in \mathbb{C}^\pm\cap\{\ |z|\ge \delta\ \}$.
\end{coro}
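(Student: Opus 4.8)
The plan is to deduce \eqref{eq3-02a} as an immediate specialization of Lemma~\ref{pro3.1} to the case $q=p'$, and then to extract the invertibility statement \eqref{eq3-02b} from \eqref{eq3-02a} by a Neumann series argument, using that the power of $|z|$ in \eqref{eq3-02a} is strictly negative on the stated range of $p$. No new analytic input is needed beyond Theorem~\ref{thm2.111} (in the form of Corollary~\ref{boundary operator}) and Lemma~\ref{pro3.1}; the work is essentially checking which exponent pairs are admissible and an interpolation for $V$.

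First I would verify that the pair $(p,p')$ satisfies all the hypotheses of Corollary~\ref{boundary operator} exactly when $\max(2n/(n+m),1)<p<2(n+1)/(n+3)$. Since $\tfrac12-\tfrac1{p'}=\tfrac1p-\tfrac12$, the condition $\min(\tfrac1p-\tfrac12,\tfrac12-\tfrac1{p'})>\tfrac1{2n}$ is equivalent to $p<\tfrac{2n}{n+1}$, which for $n\ge 2$ is implied by $p<\tfrac{2(n+1)}{n+3}$; the condition $\tfrac2{n+1}<\tfrac1p-\tfrac1{p'}=\tfrac2p-1$ is precisely $p<\tfrac{2(n+1)}{n+3}$; and the extra hypothesis needed for the resolvent bound \eqref{eq2.16A}, namely $m>n$ or ($\tfrac1p-\tfrac1{p'}\le\tfrac mn$ with $p\neq 1$ and $p'\neq\infty$), reduces to $p\ge\tfrac{2n}{n+m}$ together with $p>1$. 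Setting $\tfrac1r=\tfrac1p-\tfrac1{p'}=\tfrac{2-p}{p}$, i.e. $r=\tfrac{p}{2-p}$, the map $p\mapsto r$ is increasing, and one computes that as $p$ runs over $(\max(2n/(n+m),1),\,2(n+1)/(n+3))$ the exponent $r$ runs over $(s,\tfrac{n+1}2)$ with $s=\max(\tfrac nm,1)$. Hence the interpolation inequality $\|V\|_r\le\|V\|_s^{1-\theta}\|V\|_{(n+1)/2}^{\theta}$ (with $\tfrac1r=\tfrac{1-\theta}{s}+\tfrac{2\theta}{n+1}$) shows that $V\in L^{(n+1)/2}(\R^n)\cap L^{s}(\R^n)$ implies $V\in L^r(\R^n)$. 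Applying Lemma~\ref{pro3.1} with $q=p'$ now gives \eqref{eq3-02a}, with a constant depending on $p,n,m$ and $\|V\|_{(n+1)/2}+\|V\|_s$ but not on $z$.

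For the second part, I would note that the exponent of $|z|$ in \eqref{eq3-02a} equals $\tfrac nm(\tfrac2p-1)-1$, and that this is strictly negative precisely because $p>\tfrac{2n}{n+m}$; therefore $\|VR_0(z)\|_{p\to p}\to 0$ as $|z|\to\infty$, so there is $\delta>0$ such that $\|VR_0(z)\|_{p\to p}\le\tfrac12$ whenever $z\in\mathbb{C}^\pm$ with $|z|\ge\delta$. Since $VR_0(z)=M_VR_0(z)$ is a bounded operator on $L^p$ — because $R_0(z)\colon L^p\to L^{p'}$ by \eqref{eq2.16A} and $M_V\colon L^{p'}\to L^p$ by Hölder — the Neumann series $\sum_{k\ge 0}(-VR_0(z))^k$ converges in operator norm to $(I+VR_0(z))^{-1}$, and $\|(I+VR_0(z))^{-1}\|_{p\to p}\le (1-\tfrac12)^{-1}=2$ for all such $z$. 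This is \eqref{eq3-02b}. The only point requiring a little care is the bookkeeping translating the three admissibility conditions for $(p,p')$ into the single interval $\max(2n/(n+m),1)<p<2(n+1)/(n+3)$ and matching the resulting Lebesgue exponents $r=p/(2-p)$ to the interval $(s,(n+1)/2)$ governing the double integrability hypothesis on $V$; there is no genuine obstacle, as all the hard analysis is already packaged in Theorem~\ref{thm2.111} and Lemma~\ref{pro3.1}. (If one insists on a constant in \eqref{eq3-02a} uniform over the open range of $p$, one should keep in mind that it will in general degenerate at the endpoints, so the threshold $\delta$ in \eqref{eq3-02b} is taken for a fixed admissible $p$.)
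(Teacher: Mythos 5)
Your proof is correct and follows essentially the same route as the paper's: bound $\|VR_0(z)\|_{p\to p}$ via Lemma~\ref{pro3.1} with $q=p'$, interpolate the $L^r$-norm of $V$ between $L^s$ and $L^{(n+1)/2}$, and then use strict negativity of the exponent $\tfrac nm\big(\tfrac1p-\tfrac1{p'}\big)-1$ to run a Neumann series for $|z|$ large. Your uniform treatment of the two regimes via $s=\max(n/m,1)$ is in fact slightly cleaner than the paper's printed version, which only writes out the case $m>n$ and there momentarily invokes $\|V\|_{n/m}$ even though in that case the relevant hypothesis is $V\in L^1$.
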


\begin{proof}
We only discuss the case  $m>n$ because the proof for the case $m \le n$ is similar.
In the considered situation  $\frac{2n}{n+m}<p
<\frac{2(n+1)}{n+3}$ so if we set $\frac{1}{r}=\frac{1}{p}-\frac{1}{p'}$, then ${n\over m} <r< \frac{n+1}{2}$.
Hence  by Lemma~\ref{pro3.1}
$$\|VR_0(z)\|_{p\to p}\le C \|V\|_r \ |z|^{\frac{n}{m}(\frac{1}{p}-\frac{1}{p'})-1}\le C
\|V\|^\theta_{(n+1)/2}\|V\|^{1-\theta}_{n/m}\ |z|^{\frac{n}{m}(\frac{1}{p}-\frac{1}{p'})-1} \quad
\forall z\in \mathbb{C}^\pm\setminus\{0\},$$
where  $\theta =(\frac{1}{p}-\frac{1}{p'}-\frac{m}{n})/(\frac{2}{n+1}-\frac{m}{n}) $.  Note that
$$\|V\|^\theta_{(n+1)/2}\|V\|^{1-\theta}_{n/m}\le (1+\|V\|_{(n+1)/2})(1+\|V\|_{n/m}),$$
hence there exists a constant $C$ depending on $n, m , V$  such that estimate (\ref{eq3-02a}) holds.

Next we verify estimate (\ref{eq3-02b}). Note that $\frac{n}{m}(\frac{1}{p}-\frac{1}{p'})-1<0$ so there exists a
 constant $\delta>0$ such that  $\|VR_0(z)\|_{p\to p}\le \frac{1}{2}$
 for all $|z|>\delta$.
 By the standard Neumann series argument the last estimate  yields  $\|(1+VR_0(z))^{-1}\|_{p\to p}\le 2.$
  \end{proof}


\vskip0.3cm
In order to use Corollary \ref{coro 3.2} to establish the $L^p$-estimates of the spectral
projections measure $dE_H(\lambda)$, we need the following lemma essentially due to H\"ormander \cite[Chapter 14]{Ho1}. 
\begin{lemma}\label{le5.3} Let $0\le V\in L^\infty(\R^n)$  with compact support. Then
the equality
  \begin{eqnarray}
 \label{eq5-01}
 \langle R_H(z)f, g\rangle=\langle R_0(z)(I+VR_0(z))^{-1}f, g\rangle, \ \ f,\ g\in {\mathscr S} (\R^n)
  \end{eqnarray}
holds for all $z\in \mathbb{C}^\pm \setminus \big(\{0\}\cup \Lambda \big)$, where $\Lambda$
is the set of positive discrete eigenvalues of $H=P(D)+V$.  In particular, the functions on
the both sides of (\ref{eq5-01}) are continuous on
$ z \in \mathbb{C}^\pm \setminus \big(\{0\}\cup \Lambda \big)$ and analytic in its interior.
\end{lemma}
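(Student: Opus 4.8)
The lemma is the limiting absorption principle for $H=P(D)+V$, and the plan is the classical one of H\"ormander, \cite[Chapter 14]{Ho1}: establish the resolvent identity for non-real $z$, where everything is elementary functional analysis, and then push it to the boundary $\lambda\pm i0$ by analytic Fredholm theory resting on the uniform bounds of Section \ref{sec4}. First I would treat $\mathrm{Im}\,z\neq0$. Multiplying $H-z=(H_0-z)+V$ on the right by $R_0(z)$ gives $(H-z)R_0(z)=I+VR_0(z)$, whence formally $R_H(z)=R_0(z)(I+VR_0(z))^{-1}$ as in \eqref{eq3-01a}. Since $V$ is bounded and supported in a fixed compact set $K$, the operator $VR_0(z)$ is compact on $L^2(\R^n)$: indeed $R_0(z)$ maps $L^2$ boundedly into $W^{m,2}(\R^n)$ for $\mathrm{Im}\,z\neq0$, and $u\mapsto Vu$ is compact from $W^{m,2}$ into $L^2$ by Rellich's theorem (compact support of $V$). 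Because $H$ is self-adjoint it has no non-real eigenvalues, so $-1\notin\sigma(VR_0(z))$ and the Fredholm alternative makes $I+VR_0(z)$ invertible on $L^2$; this proves \eqref{eq5-01} for $\mathrm{Im}\,z\neq0$, in particular after pairing with $f,g\in\mathscr S(\R^n)$.

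Next I would pass to $z=\lambda\pm i0$, $\lambda>0$. The inputs are the uniform resolvent estimate \eqref{eq2.16A} of Corollary \ref{boundary operator} for $H_0$ together with the classical limiting absorption principle for $P(D)$ (valid precisely because $\Sigma$ is non-degenerate; this is the property quoted at the start of Section \ref{sec4}): for $f\in\mathscr S(\R^n)$, $R_0(\lambda\pm i\varepsilon)f\to R_0(\lambda\pm i0)f$ as $\varepsilon\downarrow0$ in $L^2_{\mathrm{loc}}$, locally uniformly in $\lambda\in(0,\infty)$. Fix $p$ with $\max(2n/(n+m),1)<p<2(n+1)/(n+3)$, as in Corollary \ref{coro 3.2}, so that $(p,p')$ is admissible in Corollary \ref{boundary operator}. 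Since $V$ is compactly supported, the local smoothing of $R_0(z)$ and Rellich's theorem again show that $VR_0(z)$ is compact on $L^p$, and the bound \eqref{eq2.16A} of Corollary \ref{boundary operator} together with the above convergence show that $z\mapsto VR_0(z)$ is continuous from $\mathbb C^{\pm}\setminus\{0\}$ into the compact operators on $L^p$, analytic in the interior. Analytic Fredholm theory then yields that $z\mapsto(I+VR_0(z))^{-1}$ is meromorphic in the interior of $\mathbb C^{\pm}\setminus\{0\}$ and extends continuously to all of $\mathbb C^{\pm}\setminus\{0\}$ off a closed exceptional set $\Lambda\subset(0,\infty)$, namely the set of $\lambda_0$ at which $I+VR_0(\lambda_0\pm i0)$ fails to be invertible. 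Composing with the continuous family $R_0(z)$ (Corollary \ref{boundary operator}) extends the right-hand side of \eqref{eq5-01} continuously, analytically in the interior, and one \emph{defines} $R_H(\lambda\pm i0)$ as this limit so that \eqref{eq5-01} holds on $\mathbb C^{\pm}\setminus(\{0\}\cup\Lambda)$; by conjugation symmetry ($V$, $P$ real) the two exceptional sets coincide.

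Finally I would identify $\Lambda$ with the positive $L^2$-eigenvalues of $H$, so that it is discrete. If $\lambda_0\in\Lambda$, choose $0\neq u\in L^p$ with $u=-R_0(\lambda_0\pm i0)(Vu)$; then $Vu$ is bounded and compactly supported, hence in $L^2$, and nonzero (otherwise $u=0$), so $u$ is the outgoing (resp.\ incoming) solution of $(P(D)-\lambda_0)u=-Vu$. Taking the imaginary part of $\langle(P(D)-\lambda_0)u,u\rangle$ and using that $\mathrm{Im}\,R_0(\lambda_0\pm i0)$ is (a multiple of) the spectral density $dE_{H_0}(\lambda_0)$, which is bounded $L^p\to L^{p'}$ by Corollary \ref{boundary operator}, forces the restriction of $\widehat{Vu}$ to $\sqrt[m]{\lambda_0}\,\Sigma$ to vanish; the non-degenerate curvature of $\Sigma$ then permits a Rellich-type uniqueness argument giving enough decay of $u$ to place it in $L^2(\R^n)$, whence $Hu=\lambda_0u$. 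Conversely an $L^2$-eigenfunction at $\lambda_0>0$ produces $\lambda_0\in\Lambda$ via \eqref{free measure}. This last step — ruling out ``positive resonances'' that are not genuine eigenvalues, through Rellich's uniqueness theorem for $P(D)$ — is the main technical obstacle; the remainder is soft functional analysis built on the estimates of Section \ref{sec4}.
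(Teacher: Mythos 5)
Your outline is correct and is exactly the argument that lies behind the paper's proof, but the paper itself does not carry it out: the proof in the paper is two sentences, observing that a bounded, compactly supported $V$ is a short-range perturbation of $P(D)$ in H\"ormander's sense and then citing \cite[Theorem 14.5.4]{Ho1} for the identity \eqref{eq5-01} together with its continuity/analyticity and the identification of the exceptional set $\Lambda$ with the positive discrete eigenvalues. What you have written is a faithful reconstruction of what that theorem encapsulates: the resolvent identity for $\Im z\neq 0$ via compactness of $VR_0(z)$ on $L^2$ (Rellich) and the Fredholm alternative; continuation to $\lambda\pm i0$ via the uniform $L^p\to L^{p'}$ bounds of Corollary~\ref{boundary operator} and analytic Fredholm theory; and finally the elimination of "resonances'' by showing the kernel of $I+VR_0(\lambda_0\pm i0)$ consists of genuine $L^2$-eigenfunctions, which requires Rellich-type uniqueness for $P(D)$. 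Your step structure and the functional-analytic glue are sound. The one point to flag is that your third step — inferring $u\in L^2$ from vanishing of $\widehat{Vu}$ on $\sqrt[m]{\lambda_0}\,\Sigma$ — is precisely the hard content of H\"ormander's Chapter~14 for higher-order non-degenerate $P(D)$ (not just the Laplacian), and you have only gestured at it; making it rigorous would lead you back to citing H\"ormander's uniqueness theorem anyway, so the paper's choice to cite Theorem 14.5.4 wholesale is the more economical path. In short: same mathematics, different level of delegation to the reference.
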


\begin{proof} The potential $V$ is a bounded and compactly supported function so
it is the short range perturbation  of $P(D)$, see  H\"ormander \cite[page 246 of Chapter 14]{Ho1}.
Therefore, the equality \eqref{eq5-01} immediately follows from H\"ormander \cite[Theorem 14.5.4 of Section 14.5]{Ho1}.
\end{proof}
\vskip0.3cm
\begin{remark}
The set $\Lambda$ is the point spectrum of $H$ with the finite multiplicity,
 which is discretely embed into positive real line. It would be interesting
 to show that $\Lambda$ is empty for general higher order elliptic operator $P(D)+V$.
 In the case of second order operators, the absence of positive eigenvalues has been
 studied in depth  by many authors and confirmed for  potential with  decay   of the
 order $o(1/|x|)$ and some integrable class, see e.g.  H\"ormander
\cite[Chapter 14]{Ho1}, Koch and Tataru \cite{KT} and references therein.
\end{remark}
\vskip0.2cm

\begin{proposition}\label{prop 3.3}
Under the assumptions of  Corollary \ref{coro 3.2} there exists a constant $\delta>0$ such that
 \begin{eqnarray}
 \label{eq3-03}
 \|R_H(z)\|_{p\to p'}\le C\ |z|^{\frac{n}{m}(\frac{1}{p}-\frac{1}{p'})-1}, \ \ z\in \mathbb{C}^\pm \cap \{\ |z|\ge \delta\ \}
  \end{eqnarray}
 and
  \begin{eqnarray}
 \label{eq3-03a}
 \|dE_H(\lambda)\|_{p\to p'}\le C\ \lambda^{\frac{n}{m}(\frac{1}{p}-\frac{1}{p'})-1}, \ \ \lambda\ge \delta
  \end{eqnarray}
 for all  $\max\Big( \frac{2n}{n+m}, 1\Big)<p <\frac{2(n+1)}{n+3}.$
\end{proposition}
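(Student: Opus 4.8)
The plan is to read the estimate for $R_H(z)$ directly off the perturbation formula \eqref{eq3-01a}, using the free resolvent bound \eqref{eq2.16A} of Corollary~\ref{boundary operator} together with the uniform invertibility of $I+VR_0(z)$ on $L^p$ from Corollary~\ref{coro 3.2}, and then to transfer this from the complex resolvent to its boundary values $z=\lambda\pm i0$ and finally to $dE_H(\lambda)$ through Stone's formula \eqref{eq3-01b}.

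First, fix $p$ with $\max\bigl(\frac{2n}{n+m},1\bigr)<p<\frac{2(n+1)}{n+3}$; as remarked just before Corollary~\ref{coro 3.2}, the pair $(p,p')$ then satisfies all the hypotheses of Corollary~\ref{boundary operator}, so $\|R_0(z)\|_{p\to p'}\le C|z|^{\frac{n}{m}(\frac1p-\frac1{p'})-1}$ for all $z\in\mathbb{C}^\pm\setminus\{0\}$. Let $\delta>0$ be as in Corollary~\ref{coro 3.2}, so that $I+VR_0(z)$ is invertible on $L^p$ with $\sup_{|z|\ge\delta}\|(I+VR_0(z))^{-1}\|_{p\to p}\le C$. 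For ${\rm Im}\,z\neq0$ the identity \eqref{eq3-01a} holds on $L^2$, and composing the two bounds gives
$$\|R_H(z)\|_{p\to p'}\le\|R_0(z)\|_{p\to p'}\,\|(I+VR_0(z))^{-1}\|_{p\to p}\le C|z|^{\frac{n}{m}(\frac1p-\frac1{p'})-1}$$
for $|z|\ge\delta$ and ${\rm Im}\,z\neq0$. It remains to justify passing to the boundary values $z=\lambda\pm i0$.

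The substantive point is the limiting absorption principle for $H$, i.e.\ that \eqref{eq3-01a} persists for the boundary values and that these are the weak-$\ast$ limits of $R_H(\lambda\pm i\varepsilon)$ needed in Stone's formula. For bounded, compactly supported $V$ this is Lemma~\ref{le5.3}: on $\mathbb{C}^\pm\setminus(\{0\}\cup\Lambda)$ one has $\langle R_H(z)f,g\rangle=\langle R_0(z)(I+VR_0(z))^{-1}f,g\rangle$ for $f,g\in\mathscr S(\R^n)$, and the right-hand side extends continuously to $\mathbb{C}^\pm\cap\{|z|\ge\delta\}$ since $R_0(z)$ does (by the construction in Theorem~\ref{thm2.111}) and $(I+VR_0(z))^{-1}$ does, via its Neumann series and Corollary~\ref{coro 3.2}; comparing the two sides forces $\Lambda\cap\{|z|\ge\delta\}=\emptyset$, so $R_H(\lambda\pm i0)$ exists and obeys \eqref{eq3-03}. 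For a general $0\le V\in L^{(n+1)/2}\cap L^{s}$ I would approximate by the truncations $V_k=\min(V,k)\chi_{B(0,k)}$, which are bounded and compactly supported and, by dominated convergence, satisfy $V_k\to V$ in $L^{(n+1)/2}$ and in $L^{s}$, hence in $L^r$ with $\frac1r=\frac1p-\frac1{p'}$. Since $\|V_k\|_r\le\|V\|_r$, the same $\delta$ works for every $H_k=H_0+V_k$, and by Lemma~\ref{pro3.1} $\|(V_k-V)R_0(z)\|_{p\to p}\le C\|V_k-V\|_r\,\delta^{\frac{n}{m}(\frac1p-\frac1{p'})-1}\to0$ uniformly for $|z|\ge\delta$; hence $(I+V_kR_0(z))^{-1}\to(I+VR_0(z))^{-1}$ and $R_{H_k}(z)=R_0(z)(I+V_kR_0(z))^{-1}\to R_0(z)(I+VR_0(z))^{-1}$ in $\|\cdot\|_{p\to p'}$, uniformly on $\{|z|\ge\delta\}$. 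On the other hand $Q_{V_k}\uparrow Q_V$, so $H_k\to H$ in the strong resolvent sense and $R_{H_k}(z)\to R_H(z)$ strongly for ${\rm Im}\,z\neq0$; the two limits must agree, so \eqref{eq3-01a} holds for $H$, its right-hand side extends continuously to $\mathbb{C}^\pm\cap\{|z|\ge\delta\}$, and \eqref{eq3-03} follows, with the constant coming from \eqref{eq2.16A} and Corollary~\ref{coro 3.2}.

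Finally, \eqref{eq3-03a} is immediate: by Stone's formula \eqref{eq3-01b}, for $\lambda\ge\delta$,
$$\|dE_H(\lambda)\|_{p\to p'}\le\frac{1}{2\pi}\Bigl(\|R_H(\lambda+i0)\|_{p\to p'}+\|R_H(\lambda-i0)\|_{p\to p'}\Bigr)\le C\lambda^{\frac{n}{m}(\frac1p-\frac1{p'})-1}.$$
I expect the one genuinely delicate step to be this limiting absorption argument — specifically, checking that the operator obtained from the Neumann series at $z=\lambda\pm i0$ really is the weak-$\ast$ limit of $R_H(\lambda\pm i\varepsilon)$, which is precisely why one reduces, through the truncations $V_k$, to the short-range situation covered by Lemma~\ref{le5.3}. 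Everything else is bookkeeping with bounds already in hand.
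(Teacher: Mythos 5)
Your proposal is correct and follows essentially the same route as the paper: you use the free resolvent bound from Corollary~\ref{boundary operator}, the uniform Neumann-series invertibility of $I+VR_0(z)$ from Corollary~\ref{coro 3.2}, reduce to bounded compactly supported potentials via Lemma~\ref{le5.3}, pass to general $V$ by monotone truncation together with the second resolvent identity for $(I+V_kR_0(z))^{-1}-(I+VR_0(z))^{-1}$, and finally Stone's formula. The only divergence from the paper is cosmetic: to identify the norm limit of $R_{H_k}(z)$ with $R_H(z)$, you invoke strong resolvent convergence (from $Q_{V_k}\uparrow Q_V$) for all $\operatorname{Im}z\neq 0$ directly, whereas the paper invokes Kato's monotone form convergence theorem only on $\operatorname{Re}z<0$ and then extends the identity to $\mathbb{C}^\pm\cap\{|z|\ge\delta\}$ by uniqueness of analytic continuation — a slightly longer bookkeeping step that you sidestep legitimately. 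One small spot worth tightening: where you write ``comparing the two sides forces $\Lambda\cap\{|z|\ge\delta\}=\emptyset$,'' the paper makes this explicit by noting that a positive eigenvalue $\lambda>\delta$ with eigenfunction $g_k$ would force $f_k=V_kg_k\neq 0$ in $L^p$ satisfying $(I+V_kR_0(\lambda+i0))f_k=0$, contradicting the uniform invertibility you have already established; including that line would close the gesture into an argument.
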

\begin{proof}
It suffices to prove  estimate (\ref{eq3-03})
for large $|z|>\delta$ since  estimates (\ref{eq3-03a}) immediately follow from
(\ref{eq3-03}) by Stone's formula (\ref{eq3-01b}).  Similarly, we only discuss the case $m>n$.
Given  $\frac{2n}{n+m}<p<\frac{2(n+1)}{n+3}$, then $V\in L^r(\R^n)$ with $r={p\over 2-p}
\in ({n\over m},\frac{n+1}{2})$. In order to obtain (\ref{eq3-03}),  we need to  establish
 equality (\ref{eq5-01}) for $|z|>\delta$ as $0\le V\in L^r(\R^n)$.   Firstly, we can take a
  monotonically increasing sequence of $0\le V_k\in L^\infty$ with compact support such that
   $V_k(x)$ converges to $V(x)$  as $k\rightarrow\infty$ in both pointwise and  $L^r$ norm sense.
     By Lemma \ref{le5.3}  for every  $k$
 \begin{eqnarray}
 \label{eq5-02}
\langle R_{H_k}(z)f, g\rangle=\langle R_0(z)(I+V_kR_0(z))^{-1}f, g\rangle, \ \  f,\ g\in {\mathscr S} (\R^n)
  \end{eqnarray}
 for all $z\in \mathbb{C}^\pm \setminus \big(\{0\}\cup \Lambda_k \big)$, where $\Lambda_k$
 is the point spectrum  of  the operator $H_k=P(D)+V_k$.  Note that $\|V_k\|_{r}\le \|V\|_{r} $
 for every  $k$ so by Corollary \ref{coro 3.2} the estimates
 $\|(I+V_kR_0(z))^{-1}\|_{p-p}\le C$ hold  uniformly for all $|z|>\delta$, which implies that
 the set $|z|> \delta$ has no eigenvalues of $H_k$ (i.e. $\Lambda_k\cap \ \{\ |z|>\delta\ \}$ is
  an empty set).  Indeed, if $\lambda\in \Lambda_k$ and $\lambda>\delta$, then
there is a $0\neq g_k\in L^2$ such that
$$
(\lambda-H_k)g_k=(\lambda-P(D)-V_k)g_k=0.
$$
Set $f_k=V_kg_k$. Then $g_k=R_0(\lambda+i0)f_k$ and $(I+V_kR_0(\lambda +i0))f_k=0$. Note that
 $ f_k\neq 0$  and it belongs to $L^p(\R^n)$ by H\"older's inequality. This contradicts the
 existence of inverse   $(I+V_kR_0(\lambda\pm i0))^{-1}$ as bounded operator on $L^p(\R^n)$.
   Thus  equality (\ref{eq5-02}) actually holds  on $\mathbb{C}^\pm \cap \{\ |z|\ge \delta\ \}$,
    the both sides of which  are analytic on the interior of $\mathbb{C}^\pm \cap \{\ |z|\ge \delta\ \}$
	and continuous up to their boundary.

Now we can extend the argument above to  a  general potential $0\le V\in L^r$ by taking limit in
 equality (\ref{eq5-02}). Note that for $|z|\ge \delta$,
$$(I+V_kR_0(z))^{-1}-(I+VR_0(z))^{-1}=(I+V_kR_0(z))^{-1}\Big((V_k-V)R_0(z)\Big)(I+VR_0(z))^{-1}.$$
By Corollary \ref{coro 3.2}
$$\|(I+V_kR_0(z))^{-1}-(I+VR_0(z))^{-1}\|_{p-p}\le C \|V_k-V\|_r\rightarrow 0$$
as $k\rightarrow\infty$.
Hence the left side of (\ref{eq5-02}) converges uniformly to
 $\big \langle R_0(z)(I+VR_0(z))^{-1}f, g\big \rangle$ on any compact subset
  $K $ of $\mathbb{C}^\pm \cap \{\ |z|\ge \delta\ \}$  and  the function
  $z \to \big \langle R_0(z)(I+VR_0(z))^{-1}f, g\big \rangle$  is continuous
  on the set $  \mathbb{C}^\pm\cap \{\ |z|\ge \delta\ \} $ and analytic in its
  interior. On the other hand, for any $k$ we define the closed forms $Q_{V_k}$ associated with $H_k=P(D)+V_k$ by
  \begin{eqnarray}
 Q_{V_k}(f):=\int_{\mathbb{R}^n} P(\xi)\ |\widehat{f}(\xi)|^2 d\xi+\int_{\mathbb{R}^n} V_k|f|^2 dx, \ \  f\in W^{m,2}(\mathbb{R}^{n}).
 \end{eqnarray}
Then since the increasing sequence of nonnegative
  forms $Q_{V_k}(f)$  monotonically converges to  the form $Q_V(f)$ defined in (\ref{eq3-01}),
  so  by Kato \cite[Theorem 3.13a]{Ka} it follows that  $\langle R_{H_k}(z)f, g\rangle $
  converges to $\langle R_{H}(z)f, g\rangle $ for each $\Re z<0$. Hence on the common
  domain  $\mathbb{C}^\pm \cap \{\ \Re z<-\delta\ \}$
\begin{eqnarray}
 \label{eq5-03}
\langle R_H(z)f, g\rangle=\langle R_0(z)(I+VR_0(z))^{-1}f, g\rangle, \ \  f,\ g\in {\mathscr S} (\R^n).
  \end{eqnarray}
Note that  both side of (\ref{eq5-03})  extend analytically into the interior of
$\mathbb{C}^\pm\cap \{\ |z|\ge \delta\ \}$, so by the uniqueness of analytic function
 extension  equality (\ref{eq5-03}) holds on  $\mathbb{C}^\pm\cap \{\ |z|\ge \delta\ \}$.
  Thus   estimate \eqref{eq3-03} follows from  estimates \eqref{eq2.16A} and \eqref{eq3-02b}.
\end{proof}

If  $m<n$ and the $L^{n\over m}$ norm of  potential $V$ is  small then we can  extend
Proposition  \ref {prop 3.3} to a  small  values of frequency $\lambda$.
The proof is based on the  uniform Sobolev $L^p \to L^q$ estimates
of the free resolvent  $R_0(z)$, which yields the required estimates for pairs $(p,q)$
on the Sobolev line $1/p-1/q=m/n$.

\begin{proposition}\label{prop 3.4}
 Suppose  that $n\ge 2$, $m\ge 2$, $H_0$ satisfies assumptions  of Theorem~\ref{thm2.111}
 and that  $0\le V\in L^{n\over m}(\mathbb{R}^{n})$. There exists a constant
$c_0>0$ such that when $\|V\|_{n\over m}\le c_0$,  then
\begin{eqnarray}
 \label{eq3-03b}
 \|R_H(z)\|_{p\to p'}\le C\ |z|^{\frac{n}{m}(\frac{1}{p}-\frac{1}{p'})-1}, \ \quad \forall z\in \mathbb{C}^\pm\setminus\{0\}
  \end{eqnarray}
 and
  \begin{eqnarray}
 \label{eq3-03c}
 \|dE_H(\lambda)\|_{p\to p'}\le C\ \lambda^{\frac{n}{m}(\frac{1}{p}-\frac{1}{p'})-1}, \ \ \lambda>0
  \end{eqnarray}
 for all
\begin{eqnarray}
 \label{eq3-03d}
 \frac{2n}{n+m}\le p <\min \Bigg(\frac{2(n+1)}{n+3}, {n\over m}\Bigg).
 \end{eqnarray}
\end{proposition}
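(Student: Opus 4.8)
The plan is to run the perturbation argument of Proposition~\ref{prop 3.3} --- the resolvent identity \eqref{eq3-01a} together with the uniform Sobolev estimate \eqref{eq2.16A} and Stone's formula \eqref{eq3-01b} --- the only new ingredient being that the smallness of $\|V\|_{n/m}$ replaces the restriction to large $|z|$. As there, it suffices to prove \eqref{eq3-03b}, since \eqref{eq3-03c} then follows from \eqref{eq3-01b} with $|z|=\lambda$; fix $p$ in the stated range and set $\frac1q=\frac1p-\frac mn$. Since $\frac{2n}{n+m}\le p<\frac nm$ forces $m<n$ and $\frac1q>0$, one has $p<q<\infty$, and one checks that both $(p,q)$ and $(p,p')$ satisfy all the hypotheses of Corollary~\ref{boundary operator}.

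The key step is a bound on $VR_0(z)$ that is uniform in $z\in\mathbb{C}^\pm\setminus\{0\}$ \emph{including small $|z|$}. On the Sobolev embedding line $\frac1p-\frac1q=\frac mn$ the exponent in \eqref{eq2.16A} vanishes, so $\|R_0(z)\|_{p\to q}\le C$ with $C$ independent of $z$; since $\frac1p=\frac1q+\frac mn$, H\"older's inequality gives $\|M_V\|_{q\to p}\le\|V\|_{n/m}$, whence
\[
\|VR_0(z)\|_{p\to p}\le C_0\,\|V\|_{n/m},\qquad\forall\,z\in\mathbb{C}^\pm\setminus\{0\}.
\]
Taking $c_0=(2C_0)^{-1}$, the hypothesis $\|V\|_{n/m}\le c_0$ forces $\|VR_0(z)\|_{p\to p}\le\tfrac12$, so by a Neumann series $I+VR_0(z)$ is invertible on $L^p(\R^n)$ with $\|(I+VR_0(z))^{-1}\|_{p\to p}\le 2$, uniformly in $z$.

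Next I would upgrade \eqref{eq3-01a}, in the sense of the bilinear identity \eqref{eq5-01}, to all of $\mathbb{C}^\pm\setminus\{0\}$, following the approximation argument of Proposition~\ref{prop 3.3} essentially verbatim. Approximate $0\le V$ by an increasing sequence $0\le V_k\in L^\infty(\R^n)$ with compact support, $V_k\to V$ pointwise and in $L^{n/m}$; Lemma~\ref{le5.3} gives \eqref{eq5-01} for $H_k=P(D)+V_k$ off $\{0\}\cup\Lambda_k$, and since $\|V_kR_0(z)\|_{p\to p}\le C_0\|V_k\|_{n/m}\le\tfrac12$ for every $z$, the eigenfunction argument of Proposition~\ref{prop 3.3} shows $\Lambda_k=\emptyset$, so \eqref{eq5-01} holds for each $H_k$ on all of $\mathbb{C}^\pm\setminus\{0\}$. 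The uniform convergence $\|(I+V_kR_0(z))^{-1}-(I+VR_0(z))^{-1}\|_{p\to p}\le C\|V_k-V\|_{n/m}\to0$, Kato's monotone convergence of the forms $Q_{V_k}\nearrow Q_V$ (giving $\langle R_{H_k}(z)f,g\rangle\to\langle R_H(z)f,g\rangle$ for $\Re z<0$, \cite[Theorem 3.13a]{Ka}), and analytic continuation --- both sides of \eqref{eq5-01} are analytic in the interior of $\mathbb{C}^\pm\setminus\{0\}$ and continuous up to the positive real axis, the right-hand side because of the uniform invertibility above --- then yield \eqref{eq5-01} for $H$ on all of $\mathbb{C}^\pm\setminus\{0\}$, which in particular defines $R_H(\lambda\pm i0)$. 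Combining this with \eqref{eq2.16A} for the pair $(p,p')$ gives
\[
\|R_H(z)\|_{p\to p'}\le\|R_0(z)\|_{p\to p'}\,\|(I+VR_0(z))^{-1}\|_{p\to p}\le C\,|z|^{\frac nm(\frac1p-\frac1{p'})-1},
\]
which is \eqref{eq3-03b}, and then \eqref{eq3-03c} follows.

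I expect the main obstacle to be purely organisational: keeping the approximation and analytic-continuation bookkeeping of the third step honest --- ruling out positive eigenvalues of the $H_k$, and checking that all convergences are uniform on compact subsets of $\mathbb{C}^\pm\setminus\{0\}$ up to the positive axis. Each such ingredient already appears in the proof of Proposition~\ref{prop 3.3}; the one genuinely new point is the observation in the second paragraph --- that the exponent in \eqref{eq2.16A} is $0$ on the Sobolev line, so smallness of $\|V\|_{n/m}$ buys a bound on $VR_0(z)$ uniform as $|z|\to0$, which is exactly what removes the low-frequency restriction present in Proposition~\ref{prop 3.3}.
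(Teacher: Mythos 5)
Your proposal is correct and follows essentially the same route as the paper: pick $q$ on the Sobolev line $1/p-1/q=m/n$, use the exponent-free uniform bound $\|R_0(z)\|_{p\to q}\le C$ from Corollary~\ref{boundary operator} together with H\"older to get $\|VR_0(z)\|_{p\to p}\le C_0\|V\|_{n/m}$ for all $z\neq 0$, choose $c_0$ so as to invert $I+VR_0(z)$ by Neumann series, and conclude via the perturbed resolvent identity and Stone's formula. The only difference is one of exposition: the paper simply cites \eqref{eq3-01a} and \eqref{eq3-01b} at this point, whereas you explicitly rerun the $V_k\to V$ approximation and analytic-continuation argument from Proposition~\ref{prop 3.3} to justify that the resolvent identity holds on all of $\mathbb{C}^\pm\setminus\{0\}$; that justification is indeed needed and your filling it in is a small improvement in rigor, not a deviation in method.
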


\begin{proof}
If $p$ satisfies  condition (\ref{eq3-03d}), then there exists $q>1$ such that
the pair $(p, q)$ lies on the Sobolev line $\frac{1}{p}-\frac{1}{q}=\frac{m}{n}$ and
$\frac{1}{p}-\frac{1}{2}>\frac{1}{2n}$. This means that  all  conditions for exponents $(p,q)$
listed in Corollary \ref{boundary operator}
hold. Hence by Lemma \ref{pro3.1}
$$
\|VR_0(z)\|_{p\to p}\le C \|V\|_{\frac{n}{m}}, \ \ \forall z\in \mathbb{C}^\pm\setminus\{0\}.
$$
Setting $c_0=C$ in the above estimate ensures that $\|VR_0(z)\|_{p\to p}\le \frac{1}{2}$
and
$$\sup_{|z|>0}\|(I+VR_0(z))^{-1}\|_{p\to p}\le 2.
$$
Now  the estimates (\ref{eq3-03b}) and (\ref{eq3-03c}) follows  from  (\ref{eq3-01a}) and Stone's formula (\ref{eq3-01b}).
\end{proof}

%

 In Theorem \ref{th5.6} below we shall  extend estimates
(\ref{eq3-03d}) to the large range  $1\le p<\min \Big(\frac{2(n+1)}{n+3},{n\over m}\Big)$.
In particular, if
$m=2, n\ge3$, this  range is optimal and coincides with one described in  Corollary \ref{boundary operator}.
The argument we use is based on Lemmas \ref{lemma 3.6} and \ref{lemma 3.7} below.

\begin{lemma}\label{lemma 3.7}
Assume that $n>m\ge 2$, $H=H_0+V$ for a potential   $0\le V\in L_{\rm loc}^{1}(\mathbb{R}^{n})$
and that  $H_0=P(D)$ satisfies the assumptions  of Theorem~\ref{thm2.111}.
There exists  a constant $c_0>0$ such that if
$$
\sup_{y\in \mathbb{R}^{n}}\int_{\mathbb{R}^{n}}\frac{V(x)}{|x-y|^{n-m}}dx\le c_0,
$$
then   estimate
\begin{eqnarray}
\label{eq3-04}
\|(I+tH)^{-1}\|_{p\to q}\le C_{p,q} t^{-\frac{n}{m}(\frac{1}{p}-\frac{1}{q})},  \ \  t>0
\end{eqnarray}
holds for all pairs $(p,q)$ such that 
$0\le 1/p-1/q < {m/n}$.
Moreover, for  $k\in \mathbb{N} $ large enough
\begin{eqnarray}
\label{eq3-04a}
\|(I+tH)^{-k}\|_{p\to q}\le C_{k,p,q} t^{-\frac{n}{m}(\frac{1}{p}-\frac{1}{q})},  \ \  t>0
\end{eqnarray}
hold for all $1\le p\le q\le \infty$.
\end{lemma}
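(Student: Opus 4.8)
The plan is to reduce the whole statement to the free operator $H_0=P(D)$, whose resolvent kernel is explicit, together with a perturbation argument that uses the Kato–type smallness only on the two extreme exponents $p=1$ and $q=\infty$. (Only the ellipticity of $P$ enters here; the curvature hypothesis of Theorem~\ref{thm2.111} is not used.) \emph{Step 1 (free resolvent).} Since $P$ is homogeneous elliptic of order $m$ we have $P(\xi)\asymp|\xi|^m$, so after the dilation $\xi\mapsto t^{1/m}\xi$ the symbol $(1+tP(\xi))^{-1}$ becomes $(1+P(\xi))^{-1}$, a classical symbol of order $-m$. By standard elliptic (or heat kernel) estimates its convolution kernel $G_1$ satisfies $|G_1(z)|\le C|z|^{-(n-m)}$ for $z\neq0$ and decays rapidly at infinity; hence $G_1\in L^r(\R^n)$ iff $r<\tfrac n{n-m}$, and $G_1\in L^1$. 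Rescaling, $(I+tH_0)^{-1}$ has kernel $G_t(z)=t^{-n/m}G_1(t^{-1/m}z)$ with $|G_t(z)|\le C t^{-1}|z|^{-(n-m)}$, and Young's inequality gives $\|(I+tH_0)^{-1}\|_{p\to q}\le C\,t^{-\frac nm(\frac1p-\frac1q)}$ precisely when $0\le\frac1p-\frac1q<\frac mn$ (the endpoint $\frac1p-\frac1q=\frac mn$ being where $\|G_1\|_r=\infty$).

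\emph{Step 2 (Kato bound on the endpoints, two factorisations).} The pointwise bound $|G_t(z)|\le C t^{-1}|z|^{-(n-m)}$ together with Schur's test yields, uniformly in $t>0$, $\|\,tV(I+tH_0)^{-1}\|_{1\to1}\le C\sup_{y}\int_{\R^n}\tfrac{V(x)}{|x-y|^{n-m}}\,dx\le Cc_0$ and, symmetrically, $\|(I+tH_0)^{-1}\,tV\|_{\infty\to\infty}\le Cc_0$. Take $c_0$ with $Cc_0<1$. From $I+tH=(I+tH_0)\bigl(I+(I+tH_0)^{-1}tV\bigr)=\bigl(I+tV(I+tH_0)^{-1}\bigr)(I+tH_0)$ and convergent Neumann series one obtains $(I+tH)^{-1}=(I+tH_0)^{-1}\bigl(I+tV(I+tH_0)^{-1}\bigr)^{-1}=\bigl(I+(I+tH_0)^{-1}tV\bigr)^{-1}(I+tH_0)^{-1}$, with both inverse factors bounded by $(1-Cc_0)^{-1}$ on $L^1$ and on $L^\infty$ respectively. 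For rough $0\le V\in L^1_{\rm loc}$ these identities and the ensuing bounds are first established for the truncations $V_k=\min(V,k)\chi_{\{|x|\le k\}}$, which are short-range and covered by Lemma~\ref{le5.3}; since $\|V_k\|\le\|V\|$ in the Kato norm the bounds are uniform in $k$, and $(I+tH_k)^{-1}\to(I+tH)^{-1}$ strongly on $L^2$ by monotone convergence of the forms \cite[Theorem~3.13a]{Ka}, so everything passes to the limit by Fatou's lemma.

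\emph{Step 3 (endpoints) and Step 4 (interpolation, powers).} Running $f\in L^1$ through the \emph{second} factorisation and using $\|(I+tH_0)^{-1}\|_{1\to q}\le Ct^{-\frac nm(1-\frac1q)}$ for $1\le q<\tfrac n{n-m}$ gives $\|(I+tH)^{-1}\|_{1\to q}\le Ct^{-\frac nm(1-\frac1q)}$; running $f\in L^p$ through the \emph{first} factorisation and using $\|(I+tH_0)^{-1}\|_{p\to\infty}\le Ct^{-\frac nm\frac1p}$ for $\tfrac nm<p\le\infty$ gives $\|(I+tH)^{-1}\|_{p\to\infty}\le Ct^{-\frac nm\frac1p}$ (in particular $(I+tH)^{-1}$ is uniformly bounded on $L^1$ and on $L^\infty$). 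For $(p,q)$ with $d:=\frac1p-\frac1q\in[0,\tfrac mn)$, Riesz–Thorin interpolation of the fixed operator $(I+tH)^{-1}$ between the bounds $L^1\to L^{q_1}$ ($\frac1{q_1}=1-d$) and $L^{p_2}\to L^\infty$ ($\frac1{p_2}=d$) — both admissible by the preceding sentence and both carrying the factor $t^{-\frac nm d}$ — yields \eqref{eq3-04}. Finally, for $k\in\N$ with $k>n/m$ write $(I+tH)^{-k}$ as a product of $k$ copies of $(I+tH)^{-1}$, split $\frac1p-\frac1q\ (\le1)$ along a chain of exponents with each consecutive gap in $[0,\tfrac mn)$, apply \eqref{eq3-04} to each factor and multiply the norms; this gives \eqref{eq3-04a} for all $1\le p\le q\le\infty$.

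\emph{Where the difficulty sits.} There is no deep step, but there is a trap. If one tries to run the argument through the heat semigroup $e^{-sH}$, the Kato hypothesis controls $\sup_x\int_0^\infty (p^{H_0}_s\ast V)(x)\,ds$ only with the spatial supremum \emph{outside} the time integral, whereas the naive term-by-term estimate of the Duhamel series needs $\int_0^s\|Ve^{-rH_0}\|_{1\to1}\,dr$, which diverges. The resolvent formulation above avoids this entirely — $\|tV(I+tH_0)^{-1}\|_{1\to1}$ \emph{is} the Kato norm, with no time integration — so the only genuine tasks are: (i) the elementary kernel bound $|G_t(z)|\lesssim t^{-1}|z|^{-(n-m)}$ for $H_0$; (ii) pairing the $L^1$ endpoint with the factorisation in which $tV(I+tH_0)^{-1}$ acts first, and the $L^\infty$ endpoint with the one in which $(I+tH_0)^{-1}tV$ acts first; and (iii) the routine truncation/limiting argument for rough $V$.
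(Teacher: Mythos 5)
Your proposal is correct and takes essentially the same route as the paper: you establish the Kato-type smallness of $\|tV(I+tH_0)^{-1}\|_{1\to1}$, invert by Neumann series, factor the perturbed resolvent, carry the free Sobolev/Young bounds through the bounded inverse factor, pass to $L^\infty$ by adjointness, interpolate, and iterate for large $k$. The only substantive difference is how the $L^1\to L^1$ Kato bound is produced: you apply Schur's test directly to the pointwise bound $|G_t(z)|\lesssim t^{-1}|z|^{-(n-m)}$, whereas the paper factors through $\|VP(D)^{-1}\|_{1\to1}\,\|tP(D)(I+tP(D))^{-1}\|_{1\to1}$; both are elementary and equivalent. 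You also spell out the truncation/limiting argument for rough $V$, which the paper implicitly assumes, and you write both factorisations rather than one factorisation plus duality (which is the same thing here since $H$, $H_0$, $V$ are all self-adjoint). One small slip: in Step 3 the labels ``first'' and ``second'' factorisation are swapped — for $f\in L^1$ you must use $(I+tH)^{-1}=(I+tH_0)^{-1}\bigl(I+tV(I+tH_0)^{-1}\bigr)^{-1}$ so that the $L^1$-bounded inverse acts on $f$ before the Sobolev map $L^1\to L^q$, and dually for $L^\infty$ — but your closing summary states the pairing correctly, so this is only a typo in the exposition, not a gap in the argument.
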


\begin{proof}
We first prove that
\begin{equation}\label{step1}
\|tV(I+tP(D))^{-1}\|_{1\to 1}\le \frac{1}{2}.
\end{equation}
Note that
\begin{eqnarray}\|tV(I+tP(D))^{-1}\|_{1\to 1}
&\le& \|VP(D)^{-1}\|_{1\to 1}\|tP(D)(I+tP(D))^{-1}\|_{1\to 1}\nonumber\\
&\le& C \|VP(D)^{-1}\|_{1\to 1}.\nonumber
\end{eqnarray}
Since  the fundamental solution of $P(D)$ is bounded by $O(|x|^{m-n})$
so
$$\|VP(D)^{-1}\|_{1\to 1}\le C \sup_{y\in \mathbb{R}^{n}}\int_{\mathbb{R}^{n}}\frac{|V(x)|}{|x-y|^{n-m}}dx.
$$
Thus  when $c_0$  is enough small then \eqref{step1} holds.


Next we prove  estimates \eqref{eq3-04}.
Taking  adjoint and using interpolation we reduce the proof  to the case $p=1$. By  \eqref{step1} and Neumann series argument
 $$
 \Big\|\Big(I+tV(I+tP(D)^{-1})\Big)^{-1}\Big\|_{1\to 1}\le 2.
 $$
Writing  the standard  perturbation formula in our notation yields
$$
(I+tH)^{-1}=(I+tP(D))^{-1}\Big(I+tV(I+tP(D)^{-1})\Big)^{-1}
$$
for  all $t>0$. Hence if  $0\le 1-1/q <{m/n}$, then it follows from the Sobolev embedding that
$$\|(I+tH)^{-1}\|_{1\to q}\le 2\ \|I+tP(D)^{-1}\|_{1\to q}\le Ct^{-\frac{n}{m}({1}-\frac{1}{q})}.$$

To verify  estimate (\ref{eq3-04a}) we  note that  by \eqref{eq3-04} for any $k\in \mathbb{N}$,
   $$
   \|(I+tH)^{-k}\|_{p\to p}\le C
   $$ for all $1\le p\le \infty$.
 It follows from an interpolation argument that it suffices to show that for  $k\in \mathbb{N}$ large enough
 $$
 \|(I+tH)^{-k}\|_{1\to \infty }\le C t^{-\frac{n}{m}},  \ \  t>0.
 $$
To prove the above relation we iterate k-times the resolvent $ (I+tH)^{-1}$.   Choose
  $k\in \mathbb{N}$ such that  $0<\frac{1}{k}\le m/n $.
Let $p_1=1$ and for  each $1\le i\le k$ we define $p_{i+1}$ by putting  $1/p_i-1/p_{i+1}=\frac{1}{k}$. Note that
$p_{k+1}=\infty$.
   By  estimate \eqref{eq3-04}
$$
\|(I+tH)^{-k_0}\|_{1\to \infty}\le \prod_{i=1}^{k_0}
\|(I+tH)^{-1}\|_{p_i\to p_{i+1}}\le Ct^{-\frac{n}{m}},  \ \  t>0.
$$
This  concludes the   proof of Lemma \ref{lemma 3.7}.
\end{proof}

The following result is a consequence of Lemma \ref{lemma 3.7}.

\begin{theorem}\label{th5.6}
Assume that $n>m\ge 2$, $H=H_0+V$ for a potential   $0\le V\in L_{\rm loc}^{1}(\mathbb{R}^{n})$
and that  $H_0=P(D)$ satisfies the assumptions  of Theorem~\ref{thm2.111}. There exists a
 small constant $c_0>0$ such that if
\begin{eqnarray}
 \label{poten-norm}\|V\|_{n\over m}+\sup_{y\in \mathbb{R}^{n}}\int_{\mathbb{R}^{n}}\frac{V(x)}
 {|x-y|^{n-m}}dx\le c_0, \end{eqnarray}
 then the estimate
 \begin{eqnarray}
 \label{eq3-06}
 \|dE_H(\lambda)\|_{p\to p'}\le C\ \lambda^{\frac{n}{m}(\frac{1}{p}-\frac{1}{p'})-1}, \ \ \lambda>0
  \end{eqnarray}
holds for all
 $1\le p <\min \Big(\frac{2(n+1)}{n+3},\frac{n}{m}\Big)$.

\end{theorem}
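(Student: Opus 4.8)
The plan is to bootstrap the restriction type estimate \eqref{eq3-06} from the range $\frac{2n}{n+m}\le p<\min\big(\frac{2(n+1)}{n+3},\frac{n}{m}\big)$ already supplied by Proposition~\ref{prop 3.4} down to the full range $1\le p<\min\big(\frac{2(n+1)}{n+3},\frac{n}{m}\big)$, using the low-frequency resolvent power bounds of Lemma~\ref{lemma 3.7} together with the duality-and-interpolation device of Lemma~\ref{lemma 3.6}. First I would fix the constant $c_0$ in \eqref{poten-norm} to be no larger than the minimum of the two smallness constants (also written $c_0$) appearing in Proposition~\ref{prop 3.4} and in Lemma~\ref{lemma 3.7}; then \eqref{poten-norm} forces simultaneously $\|V\|_{n/m}\le c_0$ and $\sup_{y}\int V(x)|x-y|^{m-n}\,dx\le c_0$. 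In particular $V\in L^{n/m}(\R^n)$, so Proposition~\ref{prop 3.4} already yields \eqref{eq3-06} for every $p$ with $\frac{2n}{n+m}\le p<\min\big(\frac{2(n+1)}{n+3},\frac{n}{m}\big)$, and it remains only to handle $1\le p<\frac{2n}{n+m}$.

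For that, set $p_0=\frac{2n}{n+m}$; since $2\le m<n$ one has $1<p_0<2$, and moreover $p_0<\min\big(\frac{2(n+1)}{n+3},\frac{n}{m}\big)$ (the inequality $\frac{2n}{n+m}<\frac{n}{m}$ is just $m<n$, while $\frac{2n}{n+m}<\frac{2(n+1)}{n+3}$ reduces to $\frac{2n}{n+1}<m$, which holds as $m\ge 2$). Hence Proposition~\ref{prop 3.4}, applied at the left endpoint $p=p_0$, shows that $H$ satisfies the $(p_0,p_0')$-restriction estimate \eqref{eq3-05}. Next, by \eqref{eq3-04a} of Lemma~\ref{lemma 3.7} (available because $n>m\ge2$ and the second term of \eqref{poten-norm} is small), for $k\in\N$ large enough
\[
\|(I+tH)^{-k}\|_{1\to p_0}\le C_k\,t^{-\frac{n}{m}(1-\frac{1}{p_0})},\qquad t>0,
\]
which is precisely hypothesis \eqref{eq3-05a} of Lemma~\ref{lemma 3.6} for the pair $(1,p_0)$. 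Lemma~\ref{lemma 3.6} then gives
\[
\|dE_H(\lambda)\|_{q\to q'}\le C\,\lambda^{\frac{n}{m}(\frac{1}{q}-\frac{1}{q'})-1},\qquad\lambda>0,
\]
for all $1\le q\le p_0$. Combining this with the range already obtained from Proposition~\ref{prop 3.4} covers all $1\le p<\min\big(\frac{2(n+1)}{n+3},\frac{n}{m}\big)$, which is the assertion.

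Since all ingredients are already in place, I do not expect a genuine obstacle in assembling the argument above; it is essentially bookkeeping. The real work of the theorem was carried out earlier: the uniform Sobolev estimate \eqref{eq2.16A} for $P(D)$ (Theorem~\ref{thm2.111}, via the Carleson--Sj\"olin argument), the Neumann-series perturbation producing \eqref{eq3-03b}--\eqref{eq3-03c} (Proposition~\ref{prop 3.4}), and the Kato-type low-frequency resolvent power bounds \eqref{eq3-04a} obtained from the pointwise control $\sup_y\int V(x)|x-y|^{m-n}\,dx\le c_0$ (Lemma~\ref{lemma 3.7}). The only point needing a little care here is the verification, made above, that the intermediate exponent $p_0=\frac{2n}{n+m}$ lies strictly in $(1,2)$ and is admissible in Proposition~\ref{prop 3.4}, so that Lemma~\ref{lemma 3.6} may legitimately be invoked with this $p_0$.
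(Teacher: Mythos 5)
Your proposal is correct and follows exactly the route the paper intends: Proposition~\ref{prop 3.4} supplies the $(p_0,p_0')$-restriction estimate at $p_0=\tfrac{2n}{n+m}$ (and for the whole interval $[p_0,\min(\tfrac{2(n+1)}{n+3},\tfrac{n}{m}))$), Lemma~\ref{lemma 3.7} gives the resolvent power bound \eqref{eq3-04a} used as hypothesis \eqref{eq3-05a}, and Lemma~\ref{lemma 3.6} then pushes the spectral-measure estimate down to $p=1$. The paper's own proof is the single line ``This is a consequence of Proposition~\ref{prop 3.4}, Lemma~\ref{lemma 3.6} and Lemma~\ref{lemma 3.7}'', and your write-up supplies precisely the bookkeeping — including the checks that $p_0\in(1,2)$ and that $p_0<\min(\tfrac{2(n+1)}{n+3},\tfrac{n}{m})$ — needed to make that citation rigorous.
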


\begin{proof}
This is a consequence  of Proposition \ref{prop 3.4}, Lemma \ref{lemma 3.6} and Lemma \ref{lemma 3.7}.
\end{proof}

\begin{remark}\label{remark5.9} If $m=2$ and $n\ge 3$ (for example $H=-\Delta +V$),
 then  $\frac{2(n+1)}{n+3}\le {n\over 2}$, hence we  obtain almost the optimal range
 $1\le p <\frac{2(n+1)}{n+3}$ for  estimate
\eqref{eq3-06}. The endpoint estimate, $p=\frac{2(n+1)}{n+3}$,  can also
be verified  by the following  resolvent estimate:
 $$\Big\|(Q(D)-z)^{-1}\Big\|_{\frac{2(n+1)}{n-1}\to \frac{2(n+1)}{n+3}}\le C\ |z|^{-\frac{1}{n+1}},
 \ \quad \forall z\in \mathbb{C}^\pm\setminus\{0\}, $$
where $Q(D)$ is any second order homogeneous elliptic operator, see e.g. Stein \cite[page 370]{St1}.
\end{remark}

\section{Applications} \label{sec6}
\setcounter{equation}{0}

 As an illustration of our results we will  discuss a class  of possible applications, which
  include
 $m$-th order elliptic  operators with some positive  potentials and  Schr\"odinger operator with the
 inverse-square potential.

  \subsection{Higher order elliptic operators with   potentials}
 In this subsection, we show  H\"ormander-type spectral multiplier theorem  for
elliptic  $m$-th order  operators perturbed by  potentials.  Our discussion requires  the following lemma.
\begin{lemma}\label{le6.1}
Let $P(D)$ be a positive elliptic  $m$-th order  homogeneous operator  and
$0\le V\in L^1_{\rm loc}(\mathbb{R}^{n})$ be a potential.
  Then the semigroup $e^{-tH}$ generated
 by $H=P(D)+V$ satisfies the $m$-th order Davies-Gaffney estimates, that is,
 there exist constants $c, C>0$ such that for all $t>0$ and all $x,\ y\in \mathbb{R}^{n}$,
\begin{eqnarray}
 \label{e6.1}
 \|P_{B(x, t^{1/m})}e^{-tH}P_{B(y,t^{1/m})}\|_{2\to 2}\le C
 \exp \left(-c\left(\frac{|x-y|}{t^{1/m}}\right)^{{m}/{m-1}}\right).
 \end{eqnarray}
 \end{lemma}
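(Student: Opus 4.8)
The plan is to prove \eqref{e6.1} by the standard Davies--Gaffney exponential weight (energy) method, using the positivity of $V$ in an essential way: it lets us discard the potential term entirely and reduce matters to the free operator $P(D)$. First I would dispose of the trivial range $|x-y|\le 4t^{1/m}$: since $H\ge0$ is self-adjoint, $\|e^{-tH}\|_{2\to2}\le1$, and the right-hand side of \eqref{e6.1} is bounded below by a positive constant there, so the estimate holds after enlarging $C$. So assume $d:=|x-y|\ge 4t^{1/m}$, and fix a smooth weight $\psi$ \emph{regularised at scale $t^{1/m}$}, i.e.\ $|\nabla\psi|\le1$, $|\nabla^k\psi|\le C_k\,t^{-(k-1)/m}$ for $k\ge2$, with $\psi\equiv0$ on $B(y,t^{1/m})$ and $\psi\ge d/2$ on $B(x,t^{1/m})$ (such $\psi$ exists by mollifying a truncated distance function at scale $t^{1/m}$; the lower bound on $B(x,t^{1/m})$ uses $d\ge4t^{1/m}$).

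Next, for $\rho>0$ and $u_0\in L^2$ supported in $B(y,t^{1/m})$, I would set $u(s)=e^{-sH}u_0$ and $I(s)=\int e^{2\rho\psi}|u(s)|^2\,dx=\|e^{\rho\psi}u(s)\|_2^2$ (finite since $\psi$ is bounded, say by $\le d$). For $s>0$ one has $u(s)\in\operatorname{Dom}(H)$ and $I'(s)=-2\operatorname{Re}\langle Hu(s),e^{2\rho\psi}u(s)\rangle$, which, computed through the form $Q_V$ (and noting $e^{2\rho\psi}u(s)$ lies again in $\operatorname{Dom}(Q_V)$, multiplication by the smooth bounded $e^{2\rho\psi}$ preserving the form domain), equals $-2\operatorname{Re}\langle P(D)u(s),e^{2\rho\psi}u(s)\rangle-2\int V e^{2\rho\psi}|u(s)|^2$. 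The potential integral is $\ge0$ because $V\ge0$ and may be dropped; this is the only place positivity is used. Using $e^{\rho\psi}D_je^{-\rho\psi}=D_j+i\rho(\partial_j\psi)$ and that the vector fields $D_j+i\rho(\partial_j\psi)$ commute, $e^{\rho\psi}P(D)e^{-\rho\psi}=P(D+i\rho\nabla\psi)$, so with $v=e^{\rho\psi}u(s)$,
$$I'(s)\le -2\operatorname{Re}\big\langle P(D+i\rho\nabla\psi)v,\,v\big\rangle .$$

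The heart of the proof is then the coercivity-type bound $\operatorname{Re}\langle P(D+i\rho\nabla\psi)v,v\rangle\ge -C\rho^{m}\|v\|_2^2$, valid whenever $\rho\ge t^{-1/m}$, with $C$ depending only on $P$ and $n$. By homogeneous ellipticity $\operatorname{Re} P(\xi)=P(\xi)\ge c|\xi|^{m}$, so $\operatorname{Re}\langle P(D)v,v\rangle=\int P(\xi)|\widehat v(\xi)|^{2}d\xi\ge c\|v\|_{\dot{H}^{m/2}}^{2}$. Expanding $P(D+i\rho\nabla\psi)=P(D)+(\text{lower order terms})$, a term of order $r\le m-1$ in $D$ has coefficients built from $\rho$ and the derivatives of $\psi$, hence (by the scaling $|\nabla^k\psi|\lesssim t^{-(k-1)/m}$ and a dimensional count) bounded by $C(\rho+t^{-1/m})^{m-r}\lesssim\rho^{m-r}$ once $\rho\ge t^{-1/m}$; after symmetrising by integration by parts (placing $\le m/2$ derivatives on each factor) and applying $\|v\|_{\dot{H}^{r'}}\le\|v\|_{\dot{H}^{m/2}}^{2r'/m}\|v\|_2^{1-2r'/m}$ followed by Young's inequality, each such term is $\le\varepsilon\|v\|_{\dot{H}^{m/2}}^{2}+C_{\varepsilon}\rho^{m}\|v\|_2^{2}$. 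Choosing $\varepsilon$ small gives the claimed bound, hence $I'(s)\le 2C\rho^{m}I(s)$, so $I(t)\le e^{2C\rho^{m}t}I(0)=e^{2C\rho^{m}t}\|u_0\|_2^{2}$; since $\psi\ge d/2$ on $B(x,t^{1/m})$,
$$\big\|P_{B(x,t^{1/m})}e^{-tH}u_0\big\|_2^{2}\le e^{-\rho d}\,I(t)\le e^{-\rho d+2C\rho^{m}t}\|u_0\|_2^{2},$$
valid for all $\rho\ge t^{-1/m}$. The minimiser is $\rho_\ast\sim(d/t)^{1/(m-1)}$, which satisfies $\rho_\ast\ge t^{-1/m}$ precisely because $d\ge t^{1/m}$, and plugging it in produces the bound $Ce^{-c(d/t^{1/m})^{m/(m-1)}}$, which is \eqref{e6.1}.

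The step I expect to be the main obstacle is the coercivity estimate, specifically the bookkeeping of all the lower-order and commutator contributions of $P(D+i\rho\nabla\psi)$ — which for $m\ge4$ include operators of order up to $m-1$ — and verifying that, thanks to regularising $\psi$ at scale $t^{1/m}$ and to the restriction $\rho\ge t^{-1/m}$, each of them is genuinely absorbable into $\varepsilon\|v\|_{\dot{H}^{m/2}}^{2}+C_{\varepsilon}\rho^{m}\|v\|_2^{2}$ with constants independent of $t$ and $\rho$; this is exactly the parabolic-scaling consistency that makes the exponent $m/(m-1)$ come out. The remaining points are routine: the trivial range, the optimisation in $\rho$, the reduction to $P(D)$ via $V\ge0$, and the minor domain-theoretic care (density of the form domain, differentiability of $s\mapsto e^{-sH}u_0$ for $s>0$, and an approximation $V\mapsto\min(V,k)$ to reach general $0\le V\in L^1_{\rm loc}$).
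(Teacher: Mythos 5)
Your proof is correct in outline and uses the same underlying Davies exponential-weight method (differentiate $\|e^{\rho\psi}u(s)\|_2^2$ in $s$, discard the potential term by $V\ge0$, establish a coercivity inequality for the twisted operator, optimise in $\rho$). However, you take a genuinely different and substantially more complicated route than the paper. The paper chooses \emph{linear} weights $\psi(x)=a\cdot x$, $a\in\mathbf S^{n-1}$, so that the conjugated operator $e^{-\lambda\psi}P(D)e^{\lambda\psi}=P(D-i\lambda a)$ is again a \emph{constant-coefficient} operator. The coercivity bound $\Re\langle P(D-i\lambda a)f,f\rangle\ge -d_0\lambda^m\|f\|_2^2$ then reduces to the elementary symbol inequality $\Re P(\xi-i\lambda a)\ge -d_0\lambda^m$, which follows at once from $P(\xi)\ge c|\xi|^m$ and Young's inequality applied to the binomial corrections $|\xi|^j\lambda^{m-j}$; there are no commutators, no higher derivatives of $\psi$, and no parabolic regularisation scale. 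The localisation to balls is recovered afterwards, by picking $a=(x-y)/|x-y|$ and using $\|P_{B(x,t^{1/m})}e^{\mp\lambda\psi}\|_{\infty}\le e^{\mp\lambda\psi(x)+\lambda t^{1/m}}$.

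You instead use a compactly supported weight mollified at scale $t^{1/m}$. That forces you to confront $P(D+i\rho\nabla\psi)$ as a genuinely variable-coefficient operator and to control every lower-order term via $|\nabla^k\psi|\lesssim t^{-(k-1)/m}$ together with Gagliardo--Nirenberg interpolation and Young's inequality — exactly the bookkeeping you flag as the main obstacle. That bookkeeping does close (the parabolic scaling is consistent, as you observe), so your argument is sound, but it is markedly heavier than the paper's. What your variant buys is that $e^{\rho\psi}$ is a bounded multiplication operator, so you avoid the domain/unboundedness technicalities that arise with $e^{\lambda a\cdot x}$; the paper handles those implicitly and quickly (via the quadratic-form lower bound and the evolution ODE for $\|e^{-sH_{\lambda\psi}}f\|_2^2$), so the gain is modest. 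If you want the cleanest path, switch to linear $\psi$: positivity of $V$ still lets you drop the potential, the coercivity becomes a pointwise symbol estimate, and all commutator analysis disappears.
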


\begin{proof} The proof of \eqref{e6.1} is based on
   the ideas of Barbatis, Davies \cite{B-D} and Dungey \cite{Du}.
Consider the set of linear functions $\psi \colon \R^n \to \R$ of the form
$\psi(x)=a \cdot x$, where $a=(a_1, \ldots, a_n) \in {\bf S}^{n-1}$. Then
for $\lambda \in \R$ we consider the conjugated operator
$$
H_{\lambda \psi}=e^{-\lambda \psi}He^{\lambda \psi}
=P_{\lambda\psi}(D) +V,
$$
where $P_{\lambda \psi}(D)= e^{-\lambda \psi}P(D) e^{\lambda \psi}=P(D-i\lambda a) $.
Note that $V\ge 0$, then there exists some constant $d_0>0$ such that
$$\Re\langle H_{\lambda\psi}f, f\rangle\ge \Re \langle P_{\lambda\psi}(D)f, f\rangle \ge - d_0\lambda^m\|f\|^2_2.$$
Let $f_t=e^{-tH_{\lambda\psi}}f$ for $f\in L^2(\mathbb{R}^{n})$. Then
\begin{eqnarray}
\frac{d}{dt}\|f_{t}\|^{2}_{2}&=&-\langle
H_{\lambda\psi}f_{t},f_{t}\rangle-\langle f_{t},H_{\lambda\psi}f_{t}\rangle\nonumber\\
&=&-2\Re\langle H_{\lambda\psi}f_t, f_t\rangle\le 2d_0\lambda^{m}\|f_{t}\|^{2}_{2},\nonumber
\end{eqnarray}
which implies that
\begin{eqnarray}\label{e6.3}
\|e^{-tH_{\lambda\psi}}f\|_{2}\leq e^{2d_0\lambda^{m}t}\|f\|_{2}.
\end{eqnarray}
Note that $e^{-tH_{\lambda\psi}}=e^{-\lambda \psi}e^{-tH}e^{\lambda\psi}$. We get that
$$
\|e^{-\lambda \psi}\exp(-t H)e^{\lambda \psi}\|_{2\to 2} \le e^{c\lambda^mt}.
$$
Now we consider  $a=(a_1,a_2,a_3) \in {\bf S}^{n-1}$ such that $\psi(x)-\psi(y)=|x-y|$.
Then
$$
\big\|P_{B(x, t^{1/m})} e^{-tH} P_{B(y, t^{1/m})}\big\|_{2\to {2}}
\le C e^{c\lambda^mt-\lambda(|x-y|-2t^{1/m})}.
$$
Taking infimum over $\lambda$ in the above inequality, we obtain estimate \eqref{e6.1}.
\end{proof}

\begin{remark}\label{GEm}
Let $H=P(D)+V$ with  $0\le V\in L_{\rm loc}^{1}(\mathbb{R}^{n})$. If $m>n$ or $m=2$, then it is
well-known that the semigroup $e^{-tH}$ satisfies the Gaussian  estimates \eqref{GE}
 \begin{eqnarray}\label{e6.2'}
 |p_t(x,y)|\leq C t^{-{n\over m}} \exp\Big(-c\Big({|x-y|^m \over    t}\Big)^{1\over m-1}\Big)
\end{eqnarray}
for some $C, c>0$.    On the other hand, if $4\le m\le n$, then generally, the Gaussian bound
of $e^{-tH}$ may  fail to hold. For these results and further details, see
\cite{D2}, \cite{DDY} and therein references.
\end{remark}


We are now  able to state some results describing spectral multipliers for $m$-th order elliptic
operators with  positive potentials $V$ on ${\mathbb R^n}.$  As above, let $H=P(D)+V$  and
 $0\le V\in L_{\rm loc}^{1}(\mathbb{R}^{n})$. If $n<m$, then by Remark \ref{GEm}, the Gaussian
 estimate (\ref{e6.2'}) holds , which immediately implies   Davies-Gaffney estimate \eqref{e6.1}
 and   condition \eqref{DG}, see Section \ref{sec2}. Hence it follows from point (i)
 of Proposition \ref{prop2.2} that for any  $1\le p < 2$, the spectral multiplier operator $F(H)$ is
 bounded on $L^q(\R^n)$ for all $p<q<p'$ if a   bounded Borel function $F: [0, \infty) \to \CC$ satisfies
$\sup_{t>0}\|\eta\, \delta_tF\|_{C^k}<\infty $ for some  $k>n(1/p-1/2)$ and  some  non-zero auxiliary function
$\eta \in C_c^\infty(0,\infty)$. In particular,  as $p=1$, this exactly corresponds to a spectral
multiplier version of the classical  Mikhlin theorem. However,
for the cases $n>m$, we need to impose a non-degenerate condition \eqref{eq2.1}
on $P(\xi)$.
Now based on  estimate \eqref{eq3-06} and  Davies-Gaffney estimate \eqref{e6.1},
  the following   H\"ormander type spectral multipliers result for $H=P(D)+V$ holds.

\begin{theorem}\label{th6.2}
 Suppose  that  $n>m\ge 2$, $H_0=P(D)$ satisfies the  assumptions  of Theorem~\ref{thm2.111} and
  that  $0\le V\in L^{n\over m}(\mathbb{R}^{n})$.  There exists a
 small constant $c_0>0$ such that if
\begin{eqnarray}
 \label{poten-norm2}\|V\|_{n\over m}+\sup_{y\in \mathbb{R}^{n}}\int_{\mathbb{R}^{n}}
 \frac{V(x)}{|x-y|^{n-m}}dx\le c_0, \end{eqnarray}
then for any $1\leq p< \min\left(\frac{2(n+1)}{n+3}, {n\over m}\right)$ and  any   bounded Borel
function $F: [0, \infty) \to \CC$ satisfying
$\sup_{t>0}\|\eta\, \delta_tF\|_{W^{\alpha, 2}}<\infty $ for  $\alpha>n(1/p-1/2)$,
 the operator
$F(H)$ is bounded on $L^q(\R^n)$ for all $p<q<p'$.
In addition,
\begin{eqnarray*}
   \|F(H)  \|_{q\to q}\leq    C_\alpha\sup_{t>0}\|\eta\, \delta_tF\|_{W^{\alpha, 2} }.
\end{eqnarray*}
\end{theorem}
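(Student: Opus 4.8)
The plan is to verify that the operator $H=P(D)+V$ satisfies both of the hypotheses of part (ii) of Proposition~\ref{prop2.2}, namely the Davies-Gaffney estimate \eqref{DG} and the Stein-Tomas restriction type condition ${\rm (ST^2_{p,2,m})}$ for the given range of $p$, and then to read off the conclusion directly from that proposition. First I would record the Davies-Gaffney estimate: by Lemma~\ref{le6.1}, the semigroup $e^{-tH}$ generated by $H=P(D)+V$ satisfies the $m$-th order Davies-Gaffney estimate \eqref{e6.1}, which is exactly condition \eqref{DG}; this uses only $V\ge 0$ and $V\in L^1_{\rm loc}$, so it holds under the hypotheses of the theorem. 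Second, I would invoke the restriction type estimate for the spectral measure: under the smallness assumption \eqref{poten-norm2}, Theorem~\ref{th5.6} gives
\begin{eqnarray*}
\|dE_H(\lambda)\|_{p\to p'}\le C\,\lambda^{\frac{n}{m}(\frac{1}{p}-\frac{1}{p'})-1},\qquad \lambda>0,
\end{eqnarray*}
for all $1\le p<\min\big(\tfrac{2(n+1)}{n+3},\tfrac{n}{m}\big)$. As explained in the Introduction (see the discussion around \eqref{e1.4}) and in Section~\ref{sec2}, this spectral measure bound is the case $q=2$ of the restriction type condition, i.e.\ it is equivalent to ${\rm (ST^2_{p,2,m})}$ for the same $p$ (here one uses $C^{-1}\rho^n\le V(x,\rho)\le C\rho^n$ on $\R^n$, the Lebesgue measure, so the volume factors match). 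Thus $H$ satisfies ${\rm (ST^2_{p,2,m})}$ with this $p$ and $q=2$.

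With both hypotheses in hand, part (ii) of Proposition~\ref{prop2.2} applies with the pair $(p,2)$: for any bounded Borel function $F$ with $\sup_{t>0}\|\eta\,\delta_tF\|_{W^\alpha_2}<\infty$ for some $\alpha>\max\{n(1/p-1/2),1/2\}=n(1/p-1/2)$ (the last equality because $p<\tfrac{2(n+1)}{n+3}<\tfrac{2n}{n+1}$ forces $n(1/p-1/2)>1/2$), the operator $F(H)$ is bounded on $L^q(\R^n)$ for all $p<q<p'$. This is precisely the assertion of Theorem~\ref{th6.2}. The quantitative bound
\begin{eqnarray*}
\|F(H)\|_{q\to q}\le C_\alpha\sup_{t>0}\|\eta\,\delta_tF\|_{W^{\alpha,2}}
\end{eqnarray*}
is the explicit norm estimate that accompanies Proposition~\ref{prop2.2}(ii) (equivalently Theorem~5.1 of \cite{SYY}), obtained by tracking constants through its proof; it follows since the whole argument is linear in $F$ in the relevant norm.

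The only genuine point requiring care — and the one I would write out in detail — is the translation between the spectral measure estimate delivered by Theorem~\ref{th5.6} and the condition ${\rm (ST^2_{p,2,m})}$ required by Proposition~\ref{prop2.2}; this is the equivalence asserted (for $q=2$) in Section~\ref{sec2} and proved in \cite[Proposition~2.4]{COSY} and \cite{SYY}, and one must check that the homogeneous volume lower and upper bounds $C^{-1}\rho^n\le V(x,\rho)\le C\rho^n$ valid on $\R^n$ are enough to run it. Everything else is a direct citation. No serious obstacle is expected beyond bookkeeping, since the heavy analytic content has already been carried by Theorem~\ref{th5.6} (via the uniform Sobolev estimates of Section~\ref{sec4} and the perturbation argument of Section~\ref{sec5}) and by the abstract multiplier theorem of \cite{SYY}.
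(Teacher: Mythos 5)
Your proposal is correct and follows essentially the same route as the paper: Lemma~\ref{le6.1} supplies the Davies--Gaffney estimate \eqref{DG}, Theorem~\ref{th5.6} supplies the spectral measure bound which is the $q=2$ restriction condition ${\rm (ST^{2}_{p,2,m})}$, and then Proposition~\ref{prop2.2}(ii) gives the conclusion, using that $p<\tfrac{2(n+1)}{n+3}$ forces $n(1/p-1/2)>1/2$ so the $\max$ in the hypothesis reduces to $n(1/p-1/2)$. This matches the paper's one-line proof exactly, with the translation between \eqref{e1.4} and ${\rm (ST^{2}_{p,2,m})}$ spelled out a bit more explicitly.
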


\begin{proof} Note that  $n(1/p-1/2)>1/2$ for all $p$ above, hence  by Theorem~\ref{th5.6} and Lemma~\ref{le6.1},
Theorem~\ref{th6.2} follows from point (ii) of Proposition ~\ref{prop2.2}.
See also  \cite[Theorem 5.1]{SYY}.
\end{proof}

Note that for any $1\le p\le 2$, the function $(1-\lambda)_+^\delta\in W^{n(1/p-1/2), 2}$
 if $\alpha>n(1/p-1/2)-1/2$. Hence as a corollary, we can apply Theorem \ref{th6.2} to discuss the
  bounds of { Bochner-Riesz means }
 $S^{\alpha}_R\left({H}\right) \ \ $
where $H=P(D)+V$.

\begin{coro}\label{coro6.4}
Let $n, m, P(D)$ and $ V$ satisfy the same conditions as Theorem \ref{th6.2}. Then it follows
that for any $1\leq p< \min\Big(\frac{2(n+1)}{n+3}, {n\over m}\Big)$
 and $\alpha>n(1/p-1/2)-1/2$, Bochner-Riesz means
\begin{eqnarray*}
\sup_{R>0}\left\|S^{\alpha}_R\left({H}\right)\right\|_{r\to r}\leq C
\end{eqnarray*}
unifomly hold for any $p<r<p'$. In particular, we can take $r=p$
and $1\le p\le \frac{2(n+1)}{n+3}$ if $m=2$ and $n\ge 3$.
\end{coro}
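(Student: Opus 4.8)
The plan is to derive the corollary from the restriction type estimate for $H$ supplied by Theorem~\ref{th5.6}, combined with the abstract Bochner--Riesz machinery of Sections~\ref{sec2}--\ref{sec3}. First I would observe that, under the smallness hypothesis \eqref{poten-norm2} carried over from Theorem~\ref{th6.2}, Theorem~\ref{th5.6} furnishes the spectral measure bound \eqref{eq3-06} for $H=P(D)+V$ on the whole range $1\le p<\min\big(\frac{2(n+1)}{n+3},\frac{n}{m}\big)$. As recalled in the Introduction (see also \cite{SYY,COSY}), estimate \eqref{eq3-06} coincides with \eqref{e1.4} and is equivalent to the Stein--Tomas restriction type condition ${\rm (ST^{2}_{p, 2, m})}$ for $H$. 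On the other hand, by Lemma~\ref{le6.1} the semigroup $e^{-tH}$ satisfies the Davies--Gaffney estimates \eqref{DG}. With these two ingredients in hand, Corollary~\ref{coro2.3} applied with $q=2$ immediately gives $\sup_{R>0}\|S^{\alpha}_R(H)\|_{r\to r}\le C$ for all $p<r<p'$ and all $\alpha>n(1/p-1/2)-1/2$; since $n(1/p-1/2)>1/2$ throughout this range (as already noted in the proof of Theorem~\ref{th6.2}), the Bochner--Riesz index is automatically positive. This settles the main assertion.

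For the refinement allowing $r=p$ and $1\le p\le\frac{2(n+1)}{n+3}$ when $m=2$, $n\ge3$, I would argue in two further steps. By Remark~\ref{remark5.9}, in this regime $\frac{2(n+1)}{n+3}\le\frac{n}{2}$, so the admissible range above becomes $1\le p<\frac{2(n+1)}{n+3}$; to include the Stein--Tomas endpoint $p=\frac{2(n+1)}{n+3}$ I would invoke the endpoint resolvent bound for second order elliptic operators quoted in Remark~\ref{remark5.9} and push it through the perturbation scheme of Section~\ref{sec5} (Corollary~\ref{coro 3.2} and the analogue of Proposition~\ref{prop 3.3} at this exponent), thereby obtaining \eqref{eq3-06}, equivalently ${\rm (ST^{2}_{p, 2, m})}$, for that value of $p$ as well. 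To pass from the open interval $(p,p')$ to the borderline $r=p$ I would reproduce the computation in the proof of Theorem~\ref{th3.7}: from ${\rm (ST^{2}_{p, 2, m})}$ a $T^{\ast}T$ argument yields \eqref{e3.35}, i.e.\ $\|S^{-1/2+\varepsilon}_\lambda(\sqrt{H})\|_{p\to 2}\le C\lambda^{n(1/p-1/2)}$ for every $\varepsilon>0$ (equivalently ${\rm (BR^{-1/2+\varepsilon}_{p, 2, m})}$), and then Corollary~\ref{coro3.6} with the choice $r=s=p$, $q=2$ gives $\sup_{R>0}\|S^{\delta}_R(H)\|_{p\to p}\le C$ for $\delta>n(1/p-1/2)-1/2+\varepsilon$; letting $\varepsilon\downarrow 0$ closes the gap. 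Interpolating this $L^p\to L^p$ bound against the trivial estimate $\|S^{\delta}_R(H)\|_{2\to 2}\le C$ for $\Re\delta\ge0$ via Lemma~\ref{le3.8}, together with duality, then recovers the full range $p\le r\le p'$. For $m=2$ the semigroup $e^{-tH}$ in fact enjoys Gaussian bounds, so all the hypotheses needed in Corollaries~\ref{coro2.3}, \ref{coro3.6} and in Lemma~\ref{le3.8} are satisfied without additional effort.

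The routine parts are the reduction to ${\rm (ST^{2}_{p, 2, m})}$ and the application of Corollary~\ref{coro2.3}. The delicate points are the borderline exponents: reaching $r=p$ at the Stein--Tomas endpoint $p=\frac{2(n+1)}{n+3}$ (and, at the other end, $p=1$, which is the classical sharp $L^1\to L^1$ Bochner--Riesz bound with index $(n-1)/2$) is not covered by the soft multiplier theory, which only produces the open interval $(p,p')$. For these one must invoke the sharper $T^{\ast}T$/Stein complex-interpolation route of Section~\ref{sec3} and, for the Stein--Tomas endpoint, the endpoint resolvent estimate of Remark~\ref{remark5.9}; checking that the perturbation machinery of Section~\ref{sec5} (uniform invertibility of $I+VR_0(z)$ on $L^{2(n+1)/(n+3)}$ and the accompanying limiting absorption argument) remains valid at these endpoint exponents is where I expect the real work to lie.
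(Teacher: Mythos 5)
For the main assertion --- the uniform Bochner--Riesz bound on $L^r$ for $p<r<p'$ --- your route is essentially the paper's: Theorem~\ref{th5.6} and Lemma~\ref{le6.1} give the Stein--Tomas condition ${\rm (ST^{2}_{p, 2, m})}$ and Davies--Gaffney for $H$, and then Corollary~\ref{coro2.3} with $q=2$ (equivalently Theorem~\ref{th6.2} applied to $F(\lambda)=(1-\lambda)_+^\alpha$, which is what the sentence preceding Corollary~\ref{coro6.4} actually invokes) yields $\sup_R\|S^\alpha_R(H)\|_{r\to r}\le C$ for $\alpha>n(1/p-1/2)-1/2$. These are the same abstract spectral-multiplier machinery; only the named reference differs.

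For the ``in particular'' clause the paper gives no argument at all, so your sketch is an addition rather than a reproduction. The $T^\ast T$ step producing \eqref{e3.35} and the subsequent application of Corollary~\ref{coro3.6} with $r=s=p$, $q=2$ reproduce what the proof of Theorem~\ref{th3.7} actually does and yield $\sup_R\|S^\alpha_R(H)\|_{p\to p}\le C$ for $\alpha>n(1/p-1/2)-1/2$ when $p$ is in the interior of the admissible range, and the Gaussian bounds available for $m=2$ (Remark~\ref{GEm}) do make $p_0=1$ usable in the hypotheses of Lemmas~\ref{le3.2}, \ref{le3.8} and Corollary~\ref{coro3.6}. Two caveats you yourself flag deserve emphasis, though. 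First, Corollary~\ref{coro3.6} requires $p_0<p$, so even with $p_0=1$ it does not reach $r=p=1$; the classical $L^1$ case $\alpha=(n-1)/2$ would need a separate kernel-level argument, which is not formalized in the paper. Second, the Stein--Tomas endpoint $p=\frac{2(n+1)}{n+3}$ lies strictly outside the range of Theorem~\ref{th5.6}, so appealing to the endpoint resolvent bound of Remark~\ref{remark5.9} requires re-running the perturbation --- in particular checking uniform invertibility of $I+VR_0(z)$ on $L^{2(n+1)/(n+3)}$ via the Sobolev-line pairing $1/p-1/q=2/n$ rather than the $(p,p')$ pair, which sits on the boundary of Corollary~\ref{boundary operator} --- and this is carried out neither in your sketch nor in the paper. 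In short, your write-up is at least as complete as the paper's on the ``in particular'' clause and is more explicit about exactly which endpoint steps remain to be justified.
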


\bigskip

 \subsection{Schr\"odinger operator  with the inverse-square potential}
We consider the spectral estimates for Schr\"odinger operator $H=-\Delta+V$ with  the inverse
square potential, that is $V(x) = c/| x |^2$.   Fix $n > 2$ and assume that
$  -{(n-2)^2/4}< c $.   Note that the potential V(x) does not satisfy with
   condition  (\ref{poten-norm2}) even if $c$ is very small. Hence in the subsection
 we will study this potentials case.  First, define by quadratic form method
$ H= -\Delta + V$ on $L^2({\mathbb R^n}, dx)$.
The classical Hardy  inequality
\begin{equation}\label{hardy1}
- \Delta\geq  {(n-2)^2\over 4}|x|^{-2},
\end{equation}
shows that  for all $c > -{(n-2)^2/4}$,  the self-adjoint operator $H$  is   non-negative.
Set  $p_c^{\ast}=n/\sigma$, $\sigma= \max\{ (n-2)/2-\sqrt{(n-2)^2/4+c}, 0\}$. If $c \ge 0$, then the semigroup $\exp(-tH)$
is pointwise bounded by the Gaussian semigroup and hence acts on all $L^p$ spaces
with $1 \le p \le \infty$.  If $ c < 0$, then $\exp(-tH)$
acts as a uniformly bounded semigroup on  $L^p({\mathbb R^n})$ for
$ p \in ((p_c^{\ast})', p_c^{\ast})$ and the range $((p_c^{\ast})', p_c^{\ast})$ is optimal, see for example  \cite{LSV}.
 It was proved in \cite[Section 10]{COSY} that
 $H$ satisfies restriction estimate
\begin{equation}\label{hardy11}
\big\|dE_{H}(\lambda)\big\|_{p\to p'}\leq C \lambda^{{n\over 2}({1\over p}-{1\over p'})-{1}}, \ \ \  \ \lambda> 0
\end{equation}
 for all $p \in ((p_c^{\ast})',  \frac{2n}{n+2}]$. If $c \ge 0$, then \eqref{hardy11} for
 $p = (p_c^{\ast})' = 1$ is included.

Assume that   $n=3$. Next we will use the standard perturbation techniques to prove the following result.

\begin{proposition} Suppose that $H=-\Delta+V$ on ${\mathbb R}^3$ and $V(x)={c/|x|^2}$.
Then there exists a constant $c_0>0$ such that if $0\leq c\leq c_0$, then estimate
\begin{equation}\label{hardy111}
\big\|dE_{H}(\lambda)\big\|_{p\to p'}\leq C \lambda^{{3\over 2}({1\over p}-{1\over p'})-{1}}, \ \ \  \ \lambda\ge 0
\end{equation}
 holds for all $1\leq p\leq 4/3$.
\end{proposition}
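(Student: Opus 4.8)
The plan is to obtain the claimed restriction estimate at the two endpoints $p=1$ and $p=\frac43$ and then interpolate. Since $c\ge0$, the operator $H=-\Delta+c|x|^{-2}\ge0$ is self-adjoint, the heat semigroup $e^{-tH}$ is pointwise dominated by the Gaussian semigroup, so the Gaussian bounds \eqref{GE} hold for $H$; in particular the resolvent of $H$ has boundary values $R_H(\lambda\pm i0)$ and Stone's formula \eqref{eq3-01b} is valid. The endpoint $p=1$ is already furnished by \eqref{hardy11} (for $c\ge0$ one has $(p_c^\ast)'=1$, so $\|dE_H(\lambda)\|_{1\to\infty}\le C\lambda^{1/2}$). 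It therefore suffices to prove
\[
\|dE_H(\lambda)\|_{L^{4/3}(\mathbb{R}^3)\to L^{4}(\mathbb{R}^3)}\le C\lambda^{-1/4},\qquad\lambda>0,
\]
for $c$ small; the claim then follows by Riesz--Thorin interpolation applied to the single operator $dE_H(\lambda)$ (one checks that the interpolated pairs are exactly $(p,p')$ with $1\le p\le\frac43$ and that the powers of $\lambda$ assemble into $\lambda^{\frac32(\frac1p-\frac1{p'})-1}$).

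The crux is the uniform bound
\[
M_0:=\sup_{z\in\mathbb{C}^\pm\setminus\{0\}}\big\||x|^{-2}R_0(z)\big\|_{L^{4/3}\to L^{4/3}}<\infty .
\]
Although $|x|^{-2}\notin L^{3/2}(\mathbb{R}^3)$, so plain H\"older's inequality is unavailable, this follows from two observations. First, in $\mathbb{R}^3$ the free resolvent kernel satisfies $|R_0(z)(x,y)|\le\frac1{4\pi|x-y|}$ for every $z\in\mathbb{C}^\pm\setminus\{0\}$, hence $|R_0(z)f|\le(-\Delta)^{-1}|f|$ pointwise and it is enough to bound $\||x|^{-2}(-\Delta)^{-1}\|_{L^{4/3}\to L^{4/3}}$. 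Second, the latter is finite: by the $L^p$ Hardy--Rellich inequality $\||x|^{-2}u\|_{p}\lesssim\|\Delta u\|_{p}$ valid for $1<p<n/2=\frac32$ (equivalently, by Young's inequality in Lorentz spaces $(-\Delta)^{-1}\colon L^{4/3}=L^{4/3,4/3}\to L^{12,4/3}$, followed by H\"older in Lorentz spaces with $|x|^{-2}\in L^{3/2,\infty}$, which lands back in $L^{4/3,4/3}=L^{4/3}$). Because the inverse-square potential is exactly scale invariant relative to $R_0$, the quantity $\||x|^{-2}R_0(z)\|_{4/3\to4/3}$ is bounded uniformly over \emph{all} frequencies, not just away from $0$ and $\infty$; this is what makes $M_0$ finite. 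Consequently, if $c<c_0:=1/(2M_0)$ then $\|VR_0(z)\|_{4/3\to4/3}=c\||x|^{-2}R_0(z)\|_{4/3\to4/3}\le\frac12$ for every $z\in\mathbb{C}^\pm\setminus\{0\}$, so $I+VR_0(z)$ is invertible on $L^{4/3}$ with $\|(I+VR_0(z))^{-1}\|_{4/3\to4/3}\le2$, uniformly in $z$.

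Next I would justify the perturbed resolvent identity $R_H(z)=R_0(z)(I+VR_0(z))^{-1}$ up to the real axis, following the limiting absorption argument in the proof of Proposition~\ref{prop 3.3}: approximate $V$ from below by bounded, compactly supported potentials $V_k\uparrow V$, apply Lemma~\ref{le5.3} to each $H_k=-\Delta+V_k$ (the uniform invertibility above forces $I+V_kR_0(z)$ to be injective, so $H_k$ has no embedded positive eigenvalues), and pass to the limit, using the pointwise domination by the Newtonian potential together with dominated convergence in place of the $L^r$-continuity of $V\mapsto VR_0(z)$. This gives, for $z\in\mathbb{C}^\pm\setminus\{0\}$,
\[
\|R_H(z)\|_{4/3\to4}\le\|R_0(z)\|_{4/3\to4}\,\|(I+VR_0(z))^{-1}\|_{4/3\to4/3}\le 2C|z|^{-1/4},
\]
where $\|R_0(z)\|_{4/3\to4}\le C|z|^{-1/4}$ is the endpoint uniform Sobolev estimate for $-\Delta$ in $\mathbb{R}^3$ (Corollary~\ref{boundary operator} together with the endpoint resolvent estimate recalled in Remark~\ref{remark5.9}, after taking adjoints). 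Plugging this into Stone's formula \eqref{eq3-01b} yields $\|dE_H(\lambda)\|_{4/3\to4}\le\frac1\pi\sup_\pm\|R_H(\lambda\pm i0)\|_{4/3\to4}\le C\lambda^{-1/4}$, which is the $p=\frac43$ endpoint, and interpolating it with the $p=1$ case of \eqref{hardy11} completes the proof.

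The main obstacle is precisely the uniform bound $M_0<\infty$: the fact that $|x|^{-2}$ lies in no strong Lebesgue space means one cannot factor $|x|^{-2}R_0(z)$ through $\|R_0(z)\|_{L^p\to L^q}$ by H\"older, and the combination of the pointwise domination by $(-\Delta)^{-1}$ (valid uniformly in $z$) with the Lorentz-space (or Hardy--Rellich) estimate is essential; the scale invariance of $c|x|^{-2}$ is then what upgrades this to a bound uniform over all frequencies. A secondary technical point is carrying out the limiting absorption argument for a potential that is only form-bounded and not in any $L^r$, which requires replacing the $L^r$-norm continuity used in Proposition~\ref{prop 3.3} by the domination-plus-dominated-convergence argument indicated above.
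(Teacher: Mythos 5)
Your proof is correct and follows essentially the same strategy as the paper: pointwise domination of the free resolvent kernel by the Newtonian potential, a Rellich (equivalently, Lorentz--Young) bound showing $\||x|^{-2}(-\Delta)^{-1}\|_{L^p\to L^p}<\infty$ for $1<p<3/2$ so that $I+VR_0(z)$ can be inverted by Neumann series uniformly in $z$ once $c$ is small, and then the endpoint uniform Sobolev estimate combined via the perturbation identity and Stone's formula. The only cosmetic differences are that the paper argues directly over $p\in[6/5,4/3]$ (citing Davies--Hinz for the explicit Rellich constant $K(p)$) rather than at the Stein--Tomas endpoint $p=4/3$ followed by interpolation with \eqref{hardy11}, and that you spell out the limiting absorption step for this weak-$L^{3/2}$ potential more carefully than the paper does.
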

\begin{proof} Because  estimate (\ref{hardy111}) has known for all  $p \in [1, 6/5]$ by (\ref{hardy11})
when $n=3$, so it suffices to prove the spectral estimates for all $ p\in [6/5, 4/3]$.
We now start by recalling the well-known representation of the free resolvent
$R_0(z)=(-\Delta-z)^{-1}$
\begin{eqnarray*}
R_0(\zeta^2)g(x)=(-\Delta-\zeta^2)^{-1}g(x)=
\left\{
\begin{array}{ll}
{1\over 4\pi} \int_{\R^3}{e^{i\zeta|x-y|}\over |x-y|} g(y)dy \ \ \ &{\rm for}\ {\rm Im}\, \zeta >0,\\[6pt]
{1\over 4\pi} \int_{\R^3}{e^{-i\zeta|x-y|}\over |x-y|} g(y)dy \ \ \ &{\rm for}\ {\rm Im}\, \zeta <0,
\end{array}
\right.
\end{eqnarray*}
see e.g. \cite{D-H2}. By elementary computations we obtain that for $z\not=0$ and  $1<p<3/2$, it follows from
 \cite[Corollary 14]{D-H2} that
 \begin{eqnarray*}
\|V R_0(z) g\|^p_p &\leq & c\int_{\R^3}{|\Delta^{-1}(|g|)|^p\over |x|^{2p}} dx\nonumber\\
 &\leq & cK(p)\int_{\R^3}{|g|^p } dx,
\end{eqnarray*}
where $K(p)={p^{2}\over 3(3-2p)(p-1)}.$
Then there exists a constant $c_0>0$ such that when $0\leq c\leq c_0$, we
  have that
  $\|VR_0(z)\|_{p\to p}\le 1/2$ and $$\sup_{|z|>0}\|(I+VR_0(z))^{-1}\|_{p\to p}\le 2
  $$
 for  all $1<p<3/2$. Next
$$
\|R_0(z)\|_{p\to p'}\leq C|z|^{3({1\over p}-{1\over p'})-1}, \ \ \ \ z\in \mathbb{C}^\pm\setminus\{0\}
$$
for  all $6/5\leq p\leq 4/3$, see e.g. Stein\cite[P. 370]{St1}, so  required estimate
\eqref{hardy111} for $6/5\leq p\leq 4/3$  follows
  from the perturbation formula \eqref{eq3-01a} and  Stone's formula \eqref{eq3-01b}.
\end{proof}

\bigskip

\noindent
{\bf Acknowledgements:} We thank P. Chen  and A. Hassell for useful discussions.
A. Sikora was supported by
Australian Research Council  Discovery Grant DP 130101302. L. Yan was supported by
 NNSF of China (Grant No.  11371378).
 X. Yao was  supported by NSFC (Grant No. 11371158), the program for Changjiang Scholars and
Innovative Research Team in University (No. IRT13066).

  \end{document}